\title[MULTIPLICATIVE STRUCTURES  ]
{Multiplicative Structures  and the  Twisted  Baum-Connes  Assembly  map }
\author{No\'e B\'arcenas  }
        \address{Centro de Ciencias Matem\'aticas. UNAM \\ Ap.Postal 61-3 Xangari. Morelia, Michoac\'an M\'EXICO 58089}
         \email{barcenas@matmor.unam.mx}
         \urladdr{http://www.matmor.unam.mx\~barcenas}
\author{Paulo Carrillo Rouse}
\address{Institut  de  Math\'ematiques  de  Toulouse. \\118, route de Narbonne F-31062 Toulouse, FRANCE}
         \date{\today}
\email{paulo.carrillo@math.univ-toulouse.fr}
\urladdr{http://www.math.univ-toulouse.fr/~carrillo}
\author{Mario Vel\'asquez }
\address{Centro de Ciencias Matem\'aticas. UNAM \\Ap.Postal 61-3 Xangari. Morelia, Michoac\'an M\'EXICO 58089}
\address{Departamento de Matem\'aticas. \\Pontificia Universidad Javeriana\\Cra. 7 No. 43-82 - Edificio Carlos Ort\'iz 5to piso\\ Bogot\'a D.C, Colombia}
 \email{mavelasquezm@gmail.com}
 \urladdr{https://sites.google.com/site/mavelasquezm/}
\DeclareMathAlphabet\EuR{U}{eur}{m}{n}
\SetMathAlphabet\EuR{bold}{U}{eur}{b}{n}
\theoremstyle{plain}
\newtheorem{theorem}{Theorem}[section]
\newtheorem{lemma}[theorem]{Lemma}
\newtheorem{proposition}[theorem]{Proposition}
\newtheorem{corollary}[theorem]{Corollary}
\theoremstyle{definition}
\newtheorem{definition}[theorem]{Definition}
\newtheorem{example}[theorem]{Example}
\newtheorem{remark}[theorem]{Remark}
\global\let\c@equation=\c@theorem}
\newcommand{\comsquare}[8]                   
{\begin{CD}
#1 @>#2>> #3\\
@V{#4}VV @V{#5}VV\\
#6 @>#7>> #8
\end{CD}
}
\newcommand{\xycomsquare}[8]                   
{\xymatrix
{#1 \ar[r]^{#2} \ar[d]^{#4} &
#3 \ar[d]^{#5}  \\
#6\ar[r]^{#7} &
#8
}
}
\newcommand{\calh}{\mathcal{H}}
\newcommand{\calu}{{\mathcal U}}
\newcommand{\IC}{{\mathbb C}}
\newcommand{\IK}{{\mathbb K}}
\newcommand{\IR}{{\mathbb R}}
\newcommand{\IZ}{{\mathbb Z}}
\newcommand{\Hg}{\mathcal{\widehat{H}}}
\newcommand{\Cliff}{\operatorname{{C}liff}}
\newcommand{\Spin}{\operatorname{Spin}}
\newcommand{\SO}{\operatorname{SO}}
\newcommand{\Fred}{\ensuremath{{\mathrm{Fred}}}}
\newcommand{\FredP}{\Fred^{(0)}(\widehat{P})}
\newcommand{\UU}{\mathcal{U}}
\newcommand{\HH}{\mathcal{H}}
\newcommand{\BB}{\mathcal{B}}
\newcommand{\KK}{\mathcal{K}}
\newcommand{\higherlim}[3]{{\setbox1=\hbox{\rm lim}
        \setbox2=\hbox to \wd1{\leftarrowfill} \ht2=0pt \dp2=-1pt
        \mathop{\vtop{\baselineskip=5pt\box1\box2}}
        _{#1}}^{#2}#3}
\newcommand{\version}[1]                       
{\begin{center} last edited on #1\\
last compiled on \today\\http://unal.edu.co/
name of texfile: \jobname
\end{center}
}
\newcounter{commentcounter}
\newcommand{\commentn}[1]
{\stepcounter{commentcounter} {\bf Comment~\arabic{commentcounter}
(by N.): } {\ttfamily #1} }
\begin{document}

  \maketitle

\typeout{----------------------------  linluesau.tex  ----------------------------}


\setcounter{section}{0}

\begin{center}
Abstract
\end{center}

Using a combination of Atiyah-Segal ideas on one side and of Connes and Baum-Connes ideas on the other, we prove that the Twisted geometric K-homology groups of a Lie groupoid  have an external multiplicative structure extending hence the external product structures for proper cases considered by Adem-Ruan in \cite{AdemRuan} or by Tu,Xu and Laurent-Gengoux in \cite{tuxustacks}. These Twisted geometric K-homology groups are the left hand sides of the twisted geometric Baum-Connes assembly maps recently constructed in \cite{CaWangBC} and hence one can transfer the multiplicative structure via the Baum-Connes map to the Twisted K-theory groups whenever this assembly maps are isomorphisms. 


\section{Introduction}
In recent years twisted K-theory and twisted index theory have benefited of a great deal of interest from several groups of mathematicians and theoretical physicists. Besides its relations with string theory and theoretical physics in general, one of the main mathematical motivations was the series of works by Freed, Hopkins and Teleman in which they describe a ring structure on an equivariant twisted K-theory of a group (compact connected Lie group) and in which they give a ring isomorphism with the Verlinde algebra of the group.

For discrete or non compact Lie groups it is not clear how these multiplicative structures should be defined directly or even if they exist at all. In this paper we give a step into trying to understand these issues. Our approach is a mixture of Atiyah-Segal ideas on one side and of Connes and Baum-Connes ideas on the other. Indeed, if the group in question acts properly on a nice space then one can use a homotopy theoretical model for the twisted K-theory groups and use Atiyah-Segal ideas for defining a product in this setting. On the other hand, following Baum-Connes ideas one might expect that the analytically defined twisted equivariant $K$-theory can be approached (or assembled to be precise) by groups defined by using only proper actions (the so called left hand side). The main result of this paper is to define a multiplicative structure on the left hand side of a twisted Baum-Connes assembly map associated to every Lie groupoid, proper or not. We explain this below with more details but before let us mention why we abruptly changed our terminology from groups to groupoids. We have at least two big reasons for this, first, the category of Lie groupoids encodes much more that groups and group actions, many singular situations can be handled using appropriate groupoids; second, our constructions and proofs are largely simplified by the use Connes deformation groupoids techniques (see explanation below).

We pass now to the explicit content of the paper. For proper groupoids one can define the twisted K-theory groups by a generalization of Atiyah-J\"anich Fredholm model for classical topological K-theory. More precisely, if $G$ is a proper Lie groupoid with connected units $M$ and $P$ is a $G$-equivariant $PU(H)-$principal bundle over $M$, the 
{\it{Twisted $G$-equivariant
  K-theory}} groups of $M$ twisted by $P$ can be defined as the  homotopy  groups  of  the  $G$-equivariant  sections
\begin{equation}
K^{-p}_G(M,P) := \pi_p \left( \Gamma(M;\Fred^{(0)}(\widehat{P}))^G, s \right)
\end{equation}
where $\Fred^{(0)}(\widehat{P})\to M$ is a certain bundle constructed from $P$ with fibers an space of Fredholm operators, see definition \ref{definition K-theory of X,P} for more details.
Using this suitable choice of Fredholm bundles we follow Atiyah-Segal for defining a product
\begin{equation}\label{bulletintro}
K^{-p}_G(M,P)\times K^{-q}_G(M,P')\stackrel{\bullet}{\longrightarrow} K^{-(p+q)}_G(M,P\otimes P').
\end{equation}
These twisted K-theory groups for proper groupoids are isomorphic to the K-theory of some $C^*$-algebras  associated to the twisting, theorem 3.14 in \cite{tuxustacks} and section \ref{subsectionTopFred} below. In fact, using Kasparov external product, Tu and Xu construct a product as above in their model for twisted K-theory, they show it gives a bilinear and associative product compatible with the  vector bundle description of twisted K-theory for proper groupoids, theorem 6.1 loc.cit. In this way Tu and Xu generalized the external product defined first by Adem and Ruan in the Orbifold setting in \cite{AdemRuan} (page 552 before definition 7.6).
In proposition \ref{productsequiv} below, we show that modulo the isomorphism between the two twisted K-theory group models our product \ref{bulletintro} above coincides with the one by Tu and Xu, and hence with the one by Adem and Ruan in the Orbifold setting (and for the twistings considered there). In particular the product above is bilinear and associative.

For non necessarily proper groupoids one does not dispose of a Fredholm model for defining the multiplicative structure as above and even if there is a $C^*$-algebraic model for twisted K-theory it is not clear how to define this product directly, the Kasparov external product method mentioned above does not apply since it is not clear how to realize the twisted K-theory groups as appropriate KK-groups. However, following Baum-Connes ideas one might expect that the $K$-theory of the twisted algebra can be approached by K-theory  groups using only proper actions. 

Given a Lie groupoid $G$ (not necessarily proper) together with a class $\alpha\in H^1(G,PU(H))$, the authors in \cite{CaWangBC} formalized and generalized to the twisted case, Connes construction of the geometric K-homology group, denoted by $K_{*}^{geo}(G,\alpha)$, and the construction of the geometric Baum-Connes assembly map from this group to the K-theory group of the $C^*$-algebra $C^*(G,\alpha)$ (reduced or max, the two versions exist). The main theorem in order to prove that this group and the assembly map are well defined is the wrong way functoriality of the pushforward construction in twisted K-theory associated to oriented smooth $G$-maps (theorem 4.2 in \cite{CaWangBC}). 

In this paper we construct a product
\begin{equation}\label{Kgeoprodintro}
K_{*}^{geo}(G,\alpha)\times K_{*}^{geo}(G,\beta)\to K_{*}^{geo}(G,\alpha+\beta)
\end{equation}
that we now explain. First, we are able to describe the groups $K_{*}^{geo}(G,\alpha)$ in terms of the Fredholm picture, that is as the group generated by cycles of the form $(X,x)$ where $X$ is a $G-$proper co-compact manifold (with K-oriented and submersion moment map) and $x\in K_{G}^{-p}(X,P_X)$ (where $P_X$ is the $PU(H)$-bundle over $X$ induced by $P_\alpha$ (a $PU(H)$-bundle representing $\alpha$) and where $K_{G}^{-p}(X,P_X)$ denotes the equivariant twisted $K$-theory group associated to the action groupoid $X\rtimes G$, see definition \ref{definition K-theory of X,P} for more details) and with main relation given by the pushforward maps introduced in \cite{CaWangBC} (see definition \ref{twistkhom} for more precisions) and that we describe here as well in the Fredholm picture. Given two isomorphic $G$-equivariant $PU(H)$-bundles their associated twisted K-theory groups and their associated twisted geometric K-homology groups are isomorphic as well, also the twisted Baum-Connes map mentioned above is compatible with these isomorphisms (theorem 6.4 in \cite{CaWangBC} gives a vast generalization of this fact).

We describe briefly the product before stating the main theorem.
Let $P$ and $Q$ two twistings on $G$. Let $(X,x)$ with $x\in K_{G}^{-p}(X,P_X)$ and $(Y,y)$ with $y\in K_{G}^{-q}(Y,Q_Y)$, the product looks like follows 
\begin{equation}
(X,x)\cdot (Y,y):=(X\times_{G_0}Y,\pi_X^*x\bullet \pi_Y^*y)\in K_{G}^{-p-q}(X\times_{G_0}Y,P_{X\times_{G_0}Y}\otimes Q_{X\times_{G_0}Y})
\end{equation} 
where $\pi_X,\pi_Y$ stand for the respective projections from $X\times_{G_0}Y$ to $X$ and $Y$ and where the pullback is natural operation defined in section \ref{twistsection} below and for which the Fredholm model is very suitable. The main theorem of this paper can be stated as follows:

\begin{theorem}\label{mainthmintro}
For any Lie groupoid $G$ the product on cycles described above gives a well defined bilinear associative product
\begin{equation}\label{Kgeoprodintrothm}
K_{*}^{geo}(G,\alpha)\times K_{*}^{geo}(G,\beta)\to K_{*}^{geo}(G,\alpha+\beta)
\end{equation}
that does not depend on the choices of representatives for $\alpha$ and $\beta$.
\end{theorem}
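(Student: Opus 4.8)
The plan is to prove the theorem by reducing everything to the Fredholm picture of the geometric $K$-homology groups, where the cycles are pairs $(X,x)$ with $X$ a $G$-proper cocompact manifold with $K$-oriented submersive moment map and $x \in K_G^{-p}(X,P_X)$, and the relations are generated by bordism and by the pushforward maps of \cite{CaWangBC} realized in this picture. First I would check that the formula $(X,x)\cdot(Y,y) := (X\times_{G_0}Y, \pi_X^* x \bullet \pi_Y^* y)$ indeed produces a legitimate cycle for $K_*^{geo}(G,\alpha+\beta)$: the fibered product $X\times_{G_0}Y$ of two $G$-proper cocompact manifolds over the units is again $G$-proper and cocompact, the composite moment map is a submersion, the $K$-orientations combine to a $K$-orientation on the product (this uses that the tangent bundle along the fibers splits, up to the relevant bundles, as a sum of the pullbacks), and the class $\pi_X^* x \bullet \pi_Y^* y$ lies in the twisted $K$-theory group twisted by $P_{X\times_{G_0}Y}\otimes Q_{X\times_{G_0}Y}$ by the definition of the pullback and of the external product $\bullet$ of \eqref{bulletintro}. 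Here I would lean on the fact, established earlier in the paper, that the pullback along a $G$-map is a well-defined natural operation in the Fredholm model and is compatible with $\otimes$ of twistings.

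Next I would establish well-definedness, i.e.\ independence of the choice of cycle representatives in each variable. The relations defining $K_*^{geo}(G,\alpha)$ are: (i) direct sum $\leftrightarrow$ disjoint union, (ii) bordism, and (iii) the pushforward relation $(X,x) \sim (X', \varphi_! x)$ for $\varphi\colon X\to X'$ an oriented smooth $G$-map. For (i) the compatibility is immediate from bilinearity of $\bullet$ and the fact that $(X_1\sqcup X_2)\times_{G_0}Y = (X_1\times_{G_0}Y)\sqcup(X_2\times_{G_0}Y)$. For (ii), a bordism $W$ from $X_0$ to $X_1$ in the first variable yields a bordism $W\times_{G_0}Y$ from $X_0\times_{G_0}Y$ to $X_1\times_{G_0}Y$, and one checks the external product of the bounding class with $\pi_Y^* y$ restricts correctly on the boundary, using naturality of pullback and of $\bullet$ under the inclusion of the boundary. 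The substantive point is (iii): I must show $(X\times_{G_0}Y, \pi_X^* x \bullet \pi_Y^* y) \sim (X'\times_{G_0}Y, \pi_{X'}^*(\varphi_! x)\bullet \pi_Y^* y)$. This should follow from a projection-formula type identity $(\varphi\times \id_Y)_!\big(\pi_X^* x \bullet \pi_Y^* y\big) = \pi_{X'}^*(\varphi_! x)\bullet \pi_Y^* y$, i.e.\ the compatibility of the twisted pushforward with external products and pullbacks. I would prove this using the deformation-groupoid description of the pushforward maps from \cite{CaWangBC}: the pushforward is defined via a deformation to the normal cone and an index-type map, and external products are preserved at each stage of the deformation because Connes' tangent/deformation groupoid of $X\times_{G_0}Y$ is the fibered product over $G_0$ of the deformation groupoids of $X$ and $Y$, so the relevant KK-classes (or, in the Fredholm picture, the families of Fredholm operators) multiply in the obvious way.

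Associativity then reduces to the associativity of the fibered product $(X\times_{G_0}Y)\times_{G_0}Z \cong X\times_{G_0}(Y\times_{G_0}Z)$ as $G$-manifolds with $K$-orientation, together with the associativity and the compatibility-with-pullback of the external product $\bullet$ — the latter being known for proper groupoids from the Tu--Xu comparison (Proposition \ref{productsequiv}) applied to the action groupoid $(X\times_{G_0}Y\times_{G_0}Z)\rtimes G$. Bilinearity is inherited from that of $\bullet$ and the additivity relation in $K_*^{geo}$. Finally, independence of the choice of $PU(H)$-bundles representing $\alpha$ and $\beta$: if $P_\alpha \cong P_\alpha'$ and $P_\beta \cong P_\beta'$ as $G$-equivariant bundles, then $P_\alpha\otimes P_\beta \cong P_\alpha'\otimes P_\beta'$, and the induced isomorphisms on twisted $K$-theory and on twisted geometric $K$-homology (cf.\ theorem 6.4 in \cite{CaWangBC}) intertwine the two products since pullback, $\bullet$ and pushforward are all natural with respect to isomorphisms of twistings. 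I expect the main obstacle to be step (iii) — proving the projection formula relating pushforward to external products — since it requires carefully tracking the deformation groupoid constructions through the fibered product and checking the Fredholm-operator families match on the nose; everything else is bookkeeping with fibered products and the formal properties already set up in the earlier sections.
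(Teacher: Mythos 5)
Your proposal follows essentially the same route as the paper: the crucial step is exactly the projection-formula identity, which the paper splits into Proposition \ref{shriekring} (pushforward is compatible with $\bullet$) and Proposition \ref{shrieknatural} ($g_!\circ p^*=q^*\circ f_!$), both established via the deformation-groupoid/Fredholm picture, while associativity comes from Corollary \ref{prodFredassoc}, Lemma \ref{pullnatural} and Proposition \ref{pullring}, and independence of the bundle representatives from Proposition \ref{propprodwelldef}. The only cosmetic difference is that you verify compatibility with bordism and disjoint-union relations, whereas the paper's Definition \ref{twistkhom} imposes only the pushforward relation $(X,x)\sim(X',g_!(x))$, so that check is superfluous but harmless.
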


For proving the theorem above on needs of course pushforward functoriality (that we recall below from \cite{CaWangBC} written in terms of the Fredholm model in section \ref{pushforwardsection}), pullback functoriality (lemma \ref{pullnatural}) and several new technical results as

\begin{enumerate}
\item The compatibility of the product with respect to the pushforward maps, proposition \ref{shriekring} below.
\item The compatibility of the product with respect to the pullback maps, proposition \ref{pullring} below.
\item The compatibilty of the pushforward and the pullback constructions, proposition \ref{shrieknatural} below.
\end{enumerate}

For the properties above the use of the groupoid language becomes very useful. First of all the construction of the pushforward maps can be completely realized in the Fredholm picture by using Connes deformation groupoids, and hence adapting to this model the main results and constructions from \cite{CaWangBC} for the case of proper groupoids, we explain this in section \ref{pushforwardsection}. Second, the proofs become conceptually very simple, for example to prove the first property above amounts to check that the morphisms induced by restriction are compatible with the product. 
So even if one is only interested in the group case (Lie or discrete for instance) the use of deformation groupoids gives a unified way to construct the pushforward maps, to prove their functoriality and to prove their compatibility with the product.

But what can we say about the multiplicative structures in Twisted K-theory directly. By the results above one could expect to transpose the multiplicative structure via the assembly map
\begin{equation}
\xymatrix{
K_{*}^{geo}(G,\alpha)\ar[rr]^-{\mu_{\alpha}}&&K_*(C^*_r(G,\alpha))
}
\end{equation}
constructed in \cite{CaWangBC}. This is of course the case when these twisted Baum-Connes map are isomorphisms. Hence we have a unique bilinear associative structure on the Twisted $K$-theory groups (below $K^{*}(G,\alpha):=K_{-*}(C^*_r(G,\alpha))$)
\begin{equation}\label{prodBCintro}
K^{*}(G,\alpha)\times K^{*}(G,\beta)\to K^*(G,\alpha+\beta)
\end{equation}
compatible with the structure of \ref{mainthmintro} via the assembly maps whenever all the assembly maps $\mu_\alpha$ are isomorphisms (corollary \ref{CorBCextprod}). Now, by corollary 7.2 in \cite{CaWangBC} an assembly map $\mu_\alpha$ is an isomorphism if and only if the assembly map for the associated extension groupoid is. In particular if the geometric assembly map coincides with the analytic assembly map, one might expect that for groupoids (or groups) for which the analytic assembly is known to be an isomorphism for the respective extensions we do have that the multiplicative structure above transfer to the $K$-theory counterpart. This is for example the case for (Hausdorff) Lie groupoids satisfying the Haagerup property (\cite{Tu2} theorem 9.3, see also \cite{Tu3} theorem 6.1). We have then to study the comparison between the geometric and analytic assemblies which are expected to coincide whenever they do in the untwisted case (for discrete groups and for Lie groups).
Another interesting question would be if it is possible to construct directly these multiplicative structures on the Total twisted K-theory groups such that the assembly map is a ring/module isomorphism. These questions will be discussed elsewhere.

The external products discussed above suggest a ring/module structure reflecting in twistings the group structure of $H^1(G; PU(H))$. We discuss this in the last section in which we consider the so called Total Twisted $K-$theory ($K-$homology resp.) groups. These groups and their associated multiplicative structures appeared first in \cite{AdemRuan} in the setting of Orbifolds (definition 8.1 loc.cit).

Finally, the product (\ref{Kgeoprodintrothm}) above is the first step into trying to understand, in the non proper case, internal stringy products in groups of the form $K^{geo}_{G,*}(N,\alpha)$
 (or more generally on the K-theory counterpart) where $N$ is a crossed module (for instance $G$ itself on which $G$ acts by conjugation, in the case of a group) over $G$ and $\alpha$ a twisting with good multiplicative properties (transgressive). Indeed, in all the versions of stringy products (or internal products) one passes necessarily by a product as above before making use of the crossed module structure and of the multiplicativity of the twisting, \cite{FHT1}, \cite{AdemRuanZhang}, \cite{CWfusion}, \cite{TXring} for mention some of them.

\subsection*{Acknowledgements}
The first  author  thanks  the  support  of  DGAPA  research  grant IA100315. The third Author  thanks  the support  of  a  UNAM  Postdoctoral  Fellowship. 

The  first  and  third  author thank  Unversit\'e  Toulouse  III Paul Sabatier, as  well  as  the  Laboratoire International  Solomon  Lefschetz (LAISLA) and the ANR project KIND for  support during  a visit to  Toulouse. The second author thanks the Max Planck Institute for Mathematics in Bonn for giving him excellent conditions for work out part of this project.

\section{Preliminaries on groupoids}

In this section, we review the notion of twistings on Lie groupoids and 
discuss some examples which appear in this paper.
Let us recall what a groupoid is:

\begin{definition}
A $\it{groupoid}$ consists of the following data:
two sets $G$ and $M$, and maps
\begin{itemize}
\item[(1)]  $s,r:G \rightarrow M$ 
called the source map and target map respectively,
\item[(2)]  $m:G^{(2)}\rightarrow G$ called the product map 
(where $G^{(2)}=\{ (\gamma,\eta)\in G \times G : s(\gamma)=r(\eta)\}$),
\end{itemize}
together with  two additional  maps, $u:M \rightarrow G$ (the unit map) and 
$i:G \rightarrow G$ (the inverse map),
such that, if we denote $m(\gamma,\eta)=\gamma \cdot \eta$, $u(x)=x$ and 
$i(\gamma)=\gamma^{-1}$, we have 
\begin{itemize}
\item[(i)] $r(\gamma \cdot \eta) =r(\gamma)$ and $s(\gamma \cdot \eta) =s(\eta)$.
\item[(ii)] $\gamma \cdot (\eta \cdot \delta)=(\gamma \cdot \eta )\cdot \delta$, 
$\forall \gamma,\eta,\delta \in G$ whenever this makes sense.
\item[(iii)] $\gamma \cdot u(x) = \gamma$ and $u(x)\cdot \eta =\eta$, $\forall
  \gamma,\eta \in G$ with $s(\gamma)=x$ and $r(\eta)=x$.
\item[(iv)] $\gamma \cdot \gamma^{-1} =u(r(\gamma))$ and 
$\gamma^{-1} \cdot \gamma =u(s(\gamma))$, $\forall \gamma \in G$.
\end{itemize}
For simplicity, we denote a groupoid by $G \rightrightarrows M $.

\end{definition}

In  this paper we will only deal with Lie groupoids, that is, 
a groupoid in which $G$ and $M$ are smooth manifolds, and $s,r,m,u$ are smooth maps (with s and r submersions). 

\subsection{The Hilsum-Skandalis category}\label{HScat}
 
Lie groupoids form a category with  strict  morphisms of groupoids. It is now a well-established fact  in Lie groupoid's theory that the right category to consider is the one in which Morita equivalences correspond precisely to isomorphisms.  We review some basic definitions and properties of generalized morphisms between Lie groupoids, see \cite{tuxustacks} section 2.1, or 
\cite{HS,Mr,MM} for more detailed discussions.

\begin{definition}[Generalized homomorphisms]\label{HSmorphism}   
Let $G \rightrightarrows M$ and  
$H \rightrightarrows M'$ be two Lie groupoids.  A generalized groupoid morphism, also called a Hilsum-Skandalis morphism, from $H$ to $G$ is given by the isomorphism class of a principal $G$-bundle over $H$, that 
is, a right  principal $G$-bundle over $M'$
which is also a left $H$-bundle over $M$ such that the   the right $G$-action and the left 
$H$-action commute,  formally denoted by
\[
f:  \xymatrix{H \ar@{-->}[r] &  G}
\]
or by  
\[
\xymatrix{
H \ar@<.5ex>[d]\ar@<-.5ex>[d]&P_f \ar@{->>}[ld] \ar[rd]&G \ar@<.5ex>[d]\ar@<-.5ex>[d]\\
M'&&M.
}
\]
if   we want to emphasize  the bi-bundle $P_f$ involved. 
\end{definition}

As the name suggests,  generalized morphism  generalizes the notion of strict morphisms and can be composed. Indeed, if $P$ and $P'$ give generalized morphisms from $H$ to $G$ and from $G$ to $L$ respectively, then 
$$P\times_{G}P':=P\times_{M}P'/(p,p')\sim (p\cdot \gamma, \gamma^{-1}\cdot p')$$
gives a generalized morphism from $H$ to $L$.  Consider the category $Grpd_{HS}$ with objects Lie groupoids and morphisms given by  generalized morphisms. There is a functor
\begin{equation}\label{grpdhs}
Grpd \longrightarrow Grpd_{HS}
\end{equation}
where $Grpd$ is the strict category of groupoids. 

\begin{definition}[Morita equivalent groupoids]
Two groupoids are called Morita equivalent if they are isomorphic in $Grpd_{HS}$. 
\end{definition}

\vspace{2mm}

We list here a few examples of Morita equivalence groupoids which will be used in this paper. 

\begin{example}[Pullback groupoid]
Let $G\rightrightarrows M$ be a Lie groupoid and let $\phi:M\to M$ be a map such that $t\circ pr_2:M\times_{M}G\to M$ is a submersion (for instance if $\phi$ is a submersion), then the pullback groupoid $\phi^*G:=M\times_{M}G\times_{M} M\rightrightarrows M$ is Morita equivalent to $G$, the strict morphism $\phi^*G\to G$ being a generalized isomorphism. For more details on this example the reader can see \cite{MM} examples 5.10(4).
\end{example}

\begin{example}[Discrete groups]
Let $\Gamma$ be a discret group. Let $M$ be a manifold together with a generalized morphis�m
$$M--->\Gamma$$ (in this case this is equivalent a continuous map $M\to B\Gamma$) given by a $\Gamma$-principal bundle $\widetilde{M}\to M$ over $M$ ({\it i.e.}, a $\Gamma$-covering). 
Consider the (Connes-Moscovici) groupoid 
$$\widetilde{M}\times_\Gamma\widetilde{M}\rightrightarrows M$$
where $\widetilde{M}\times_\Gamma\widetilde{M}:=\widetilde{M}\times \widetilde{M}/\triangle{\Gamma}$ and with structural maps $s(\tilde{x},\tilde{y})=y$, $t(\tilde{x},\tilde{y})=x$ and product 
$$(\tilde{x},\tilde{y})\cdot (\tilde{y},\tilde{z}):=(\tilde{x},\tilde{z}).$$
The groupoids $\widetilde{M}\times_\Gamma\widetilde{M}\rightrightarrows M$ and $\Gamma\rightrightarrows \{e\}$ are Morita equivalent. 
\end{example}
 
 \subsection{Twistings on  Lie groupoids}
 
 In this paper,  we are only going to consider $PU(H)$-twistings on Lie groupoids 
where $H$ is an infinite dimensional, complex and separable
Hilbert space, and $PU(H)$ is the projective unitary group $PU(H)$  with the topology induced by the
norm topology on the unitary group  $U(H)$. 

\begin{definition}\label{twistedgroupoid}
A  twisting $\alpha$  on a   Lie  groupoid $G \rightrightarrows M$  is given by  a generalized morphism 
\[ \xymatrix{
\alpha: G \ar@{-->}[r]  & PU(H).}
\]
Here $PU(H)$ is viewed  as a Lie groupoid with the unit space $\{e\}$. 
\end{definition}
 
 So a twisting on a Lie groupoid $G$ is  given by a locally trivial  right  principal $PU(H)$-bundle $P_{\alpha}$ over $G$.

 \begin{remark}
 The definition of generalized morphisms given in the last subsection was for two Lie groupoids. The group $PU(H)$ it is not a  finite dimensional Lie group but it makes perfectly sense to speak of generalized morphisms from Lie groupoids to this infinite dimensional   groupoid following exactly the same definition.
 \end{remark}

\begin{example} \label{example} For a list of  various twistings on some   standard groupoids see example 1.8 in \cite{CaWangAdv}. Here we will only  a few  basic examples.
  
 \begin{enumerate}
\item (Twisting on manifolds)  Let $X$ be a $C^\infty$-manifold. We can consider the  Lie groupoid 
 $X\rightrightarrows X$  where every morphism is the identity over $X$.  A twisting on $X$ is
given by a locally trivial principal $PU(H)$-bundle over $X$.
In particular, the restriction of a twisting $\alpha$ on a  Lie groupoid $G \rightrightarrows M$
to its unit $M$ defines a twisting  $\alpha_0$ on the manifold $M$.

\item\label{obundle} (Orientation twisting) Let $X$ be a  manifold with an oriented real vector bundle $E$. The  bundle $E \to X$ defines
a natural generalized morphism 
\[
\xymatrix{
X\ar@{-->}[r] & SO(n).}
\]
Note that the fundamental unitary  representation of   $Spin^c(n)$ gives rise to a commutative
diagram of Lie group homomorphisms
\[
\xymatrix{
Spin^c(n) \ar[d]   \ar[r] & U(\mathbb{C}^{2^n}) \ar[d] \\
SO(n) \ar[r] & PU(\mathbb{C}^{2^n}).}
\]
With a choice of inclusion $\mathbb{C}^{2^n}$ into a Hilbert space $H$, we have a canonical
twisting, called the orientation twisting, denoted by
\[\xymatrix{
\beta_{E}:  X\ar@{-->}[r] & PU(H).}
\]

\item (Pull-back twisting) Given a twisting $\alpha$ on $ G$ and  for any generalized 
homomorphism $\phi: H \to G$, there is a pull-back twisting 
\[\xymatrix{
\phi^*\alpha:  H  \ar@{-->}[r]  & PU(H)}
\]
defined by the composition of $\phi$ and $\alpha$.  In particular, 
for a continuous map $\phi: X\to Y$, a twisting $\alpha$ on $Y$ gives a pull-back twisting 
$\phi^*\alpha$ on $X$. The principal $PU(H)$-bundle over $X$ defines by $\phi^*\alpha$ is
the pull-back of the  principal $PU(H)$-bundle on $Y$ associated to $\alpha$.

\item (Twisting on fiber product groupoid)  Let $N\stackrel{p}{\rightarrow} M$ be a submersion. We consider the fiber product $N\times_M N:=\{ (n,n')\in N\times N :p(n)=p(n') \}$,which is a manifold because $p$ is a submersion. We can then take the groupoid 
$$N\times_M N\rightrightarrows N$$ which is  a subgroupoid of the pair groupoid 
$N\times N \rightrightarrows N$.  Note that this groupoid is in fact Morita equivalent to the groupoid $ M \rightrightarrows M$.  A twisting on  
 $N\times_M N\rightrightarrows N$ is  given by
a pull-back twisting from a   twisting on  $M$.  

\item (Twisting on a Lie group)
By definition a twisting on a Lie group $G$ is a projective representation
$$G\stackrel{\alpha}{\longrightarrow} PU(H).$$

\end{enumerate}
\end{example}
 
\subsection{Deformation groupoids}\label{defgrpds}

One of our main tools will be the use of deformation groupoids. In this section, we review the notion of Connes' deformation groupoids from the deformation to the normal cone point of view.

{\bf Deformation to the normal cone}\label{DCN}

Let $M$ be a $C^\infty$-manifold and $X\subset M$ be a $C^\infty$-submanifold. We denote
by $\mathcal{N}_{X}^{M}$ the normal bundle to $X$ in $M$.
We define the following set
\begin{align}
\mathcal{D}_{X}^{M}:= \left( \mathcal{N}_{X}^{M} \times {0} \right) \bigsqcup   \left(M \times \mathbb{R}^* \right). 
\end{align} 
The purpose of this section is to recall how to define a $C^\infty$-structure in $\mathcal{D}_{X}^{M}$. This is more or less classical, for example
it was extensively used in \cite{HS}.

Let us first consider the case where $M=\mathbb{R}^p\times \mathbb{R}^q$ 
and $X=\mathbb{R}^p \times \{ 0\}$ ( here we
identify  $X$ canonically with $ \mathbb{R}^p$). We denote by
$q=n-p$ and by $\mathcal{D}_{p}^{n}$ for $\mathcal{D}_{\mathbb{R}^p}^{\mathbb{R}^n}$ as above. In this case
we   have that $\mathcal{D}_{p}^{n}=\mathbb{R}^p \times \mathbb{R}^q \times \mathbb{R}$ (as a
set). Consider the 
bijection  $\psi: \mathbb{R}^p \times \mathbb{R}^q \times \mathbb{R} \rightarrow
\mathcal{D}_{p}^{n}$ given by 
\begin{equation}\label{psi}
\psi(x,\xi ,t) = \left\{ 
\begin{array}{cc}
(x,\xi ,0) &\mbox{ if } t=0 \\
(x,t\xi ,t) &\mbox{ if } t\neq0
\end{array}\right.
\end{equation}
whose  inverse is given explicitly by 
$$
\psi^{-1}(x,\xi ,t) = \left\{ 
\begin{array}{cc}
(x,\xi ,0) &\mbox{ if } t=0 \\
(x,\frac{1}{t}\xi ,t) &\mbox{ if } t\neq0
\end{array}\right.
$$
We can consider the $C^\infty$-structure on $\mathcal{D}_{p}^{n}$
induced by this bijection.

We pass now to the general case. A local chart 
$(\mathcal{U},\phi)$ of $M$ at $x$  is said to be a $X$-slice   if 
\begin{itemize}
\item[1)]  $\mathcal{U}$  is an open neighbourhood of $x$ in $M$ and  $\phi : \mathcal{U}  \rightarrow U \subset \mathbb{R}^p\times \mathbb{R}^q$ is a diffeomorphsim such that $\phi(x) =(0, 0)$. 
\item[2)]  Setting $V =U \cap (\mathbb{R}^p \times \{ 0\})$, then
$\phi^{-1}(V) =   \mathcal{U} \cap X$ , denoted by $\mathcal{V}$.
\end{itemize}
With these notations understood, we have $\mathcal{D}_{V}^{U}\subset \mathcal{D}_{p}^{n}$ as an
open subset.   For $x\in \mathcal{V}$ we have $\phi (x)\in \mathbb{R}^p
\times \{0\}$. If we write 
$\phi(x)=(\phi_1(x),0)$, then 
$$ \phi_1 :\mathcal{V} \rightarrow V \subset \mathbb{R}^p$$ 
is a diffeomorphism.  Define a function 
\begin{equation}\label{phi}
\tilde{\phi}:\mathcal{D}_{\mathcal{V}}^{\mathcal{U}} \rightarrow \mathcal{D}_{V}^{U} 
\end{equation}
by setting 
$\tilde{\phi}(v,\xi ,0)= (\phi_1 (v),d_N\phi_v (\xi ),0)$ and 
$\tilde{\phi}(u,t)= (\phi (u),t)$ 
for $t\neq 0$. Here 
$d_N\phi_v: N_v \rightarrow \mathbb{R}^q$ is the normal component of the
 derivative $d\phi_v$ for $v\in \mathcal{V}$. It is clear that $\tilde{\phi}$ is
 also a  bijection. In particular,  it induces a $C^{\infty}$ structure on $\mathcal{D}_{\mathcal{V}}^{\mathcal{U}}$. 
Now, let us consider an atlas 
$ \{ (\mathcal{U}_{\alpha},\phi_{\alpha}) \}_{\alpha \in \Delta}$ of $M$
 consisting of $X-$slices. Then the collection $ \{ (\mathcal{D}_{\mathcal{V}_{\alpha}}^{\mathcal{U}_{\alpha}},\tilde{\phi}_{\alpha})
  \} _{\alpha \in \Delta }$ is a $C^\infty$-atlas of
  $\mathcal{D}_{X}^{M}$ (Proposition 3.1 in \cite{Ca4}).

\begin{definition}[Deformation to the normal cone]
Let $X\subset M$ be as above. The set
$\mathcal{D}_{X}^{M}$ equipped with the  $C^{\infty}$ structure
induced by the atlas of  $X$-slices is called
 the deformation to the  normal cone associated  to   the embedding
$X\subset M$. 
\end{definition}


One important feature about the deformation to the normal cone is the functoriality. More explicitly,  let
 $f:(M,X)\rightarrow (M',X')$
be a   $C^\infty$-map   
$f:M\rightarrow M'$  with $f(X)\subset X'$. Define 
$ \mathcal{D}(f): \mathcal{D}_{X}^{M} \rightarrow \mathcal{D}_{X'}^{M'} $ by the following formulas: \begin{enumerate}
\item[1)] $\mathcal{D}(f) (m ,t)= (f(m),t)$ for $t\neq 0$, 

\item[2)]  $\mathcal{D}(f) (x,\xi ,0)= (f(x),d_Nf_x (\xi),0)$,
where $d_Nf_x$ is by definition the map
\[  (\mathcal{N}_{X}^{M})_x 
\stackrel{d_Nf_x}{\longrightarrow}  (\mathcal{N}_{X'}^{M'})_{f(x)} \]
induced by $ T_xM 
\stackrel{df_x}{\longrightarrow}  T_{f(x)}M'$.
\end{enumerate}
 Then $\mathcal{D}(f):\mathcal{D}_{X}^{M} \rightarrow \mathcal{D}_{X'}^{M'}$ is a $C^\infty$-map (Proposition 3.4 in \cite{Ca4}). In the language of categories, the deformation to the normal cone  construction defines a functor
\begin{equation}\label{fundnc}
\mathcal{D}: \mathcal{C}_2^{\infty}\longrightarrow \mathcal{C}^{\infty} ,
\end{equation}
where $\mathcal{C}^{\infty}$ is the category of $C^\infty$-manifolds and $\mathcal{C}^{\infty}_2$ is the category of pairs of $C^\infty$-manifolds.

Given  an immersion of Lie groupoids $G_1\stackrel{\varphi}{\rightarrow}G_2$, let 
$G^N_1=\mathcal{N}_{G_1}^{G_2}$ be the total space of the normal bundle to $\varphi$, and $(G_{1}^{(0)})^N$ be the total space of the normal bundle to $\varphi_0: G_{1}^{(0)} \to G_{2}^{(0)}$.  Consider $G^N_1\rightrightarrows (G_{1}^{(0)})^N$ with the following structure maps: The source map is the derivation in the normal direction 
$d_Ns:G^N_1\rightarrow (G_{1}^{(0)})^N$ of the source map (seen as a pair of maps) $s:(G_2,G_1)\rightarrow (G_{2}^{(0)},G_{1}^{(0)})$ and similarly for the target map.

The groupoid $G^N_1$ may fail to inherit a Lie groupoid structure (see counterexample just before section IV in \cite{HS}). A sufficient condition is when $(G_{1}^{(0)})^N$ is a $G^N_1$-vector bundle over $G_{1}^{(0)}$. This is the case when $G_1^x\to G_{2}^{\varphi(x)}$ is \'etale for every $x\in G_{1}^{(0)}$ (in particular if the groupoids are \'etale) or when one considers a manifold with two foliations $F_1\subset F_2$ and the induced immersion  (again 3.1, 3.19 in \cite{HS}).

 The deformation to the normal bundle construction allows us to consider a $C^{\infty}$ structure on 
$$
G_{\varphi}:=\left( G^N_1\times \{0\} \right) \bigsqcup  \left( G_2\times \mathbb{R}^*\right),
$$
such that $G^N_1\times \{0\}$ is a closed saturated submanifold and so $G_2\times \mathbb{R}^*$ is an open submanifold.
The following results are  an immediate consequence of the functoriality of the deformation to the normal cone construction.

\begin{proposition}[Hilsum-Skandalis, 3.1, 3.19 \cite{HS}]\label{HSimmer}
Consider an immersion $G_1\stackrel{\varphi}{\rightarrow}G_2$ as above for which $(G_1)^N$ inherits a Lie groupoid structure.
Let $G_{\varphi_0}:= \big( (G_{1}^{(0)})^N\times \{0\} \big) \bigsqcup \big(  G_{2}^{(0)}\times \mathbb{R}^* \big)$ be the deformation to the normal cone of  the  pair $(G_{2}^{(0)},G_{1}^{(0)})$. The groupoid
\begin{equation}
G_{\varphi}\rightrightarrows G_{\varphi_0}
\end{equation}
with structure maps compatible  with the ones of the groupoids $G_2\rightrightarrows G_{2}^{(0)}$ and $G_1^N\rightrightarrows (G_{1}^{(0)})^N$, is a Lie groupoid with $C^{\infty}$-structures coming from  the deformation to the normal cone.
\end{proposition}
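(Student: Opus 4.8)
The plan is to derive the proposition entirely from the functoriality of the deformation to the normal cone, i.e. from the functor $\mathcal{D}\colon \mathcal{C}_2^{\infty} \to \mathcal{C}^{\infty}$ of \eqref{fundnc}, together with the elementary remark that \emph{all} the structure maps of a Lie groupoid are morphisms of pairs. Recall that $G_{\varphi}$ and $G_{\varphi_0}$ carry the $C^{\infty}$-structures of the deformation to the normal cone of $\varphi\colon G_1\to G_2$ and of $\varphi_0\colon G_1^{(0)}\to G_2^{(0)}$ respectively, so that $\mathcal{D}$ (extended in the evident way to immersions, or applied in local charts where $\varphi$ is an embedding) applies.

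First I would treat the source, target, unit and inverse. Since $s,r\colon G_2\to G_2^{(0)}$ carry $G_1$ into $G_1^{(0)}$ and induce $d_N s, d_N r$ on the normal bundles, they define morphisms $(G_2,G_1)\to(G_2^{(0)},G_1^{(0)})$ in $\mathcal{C}_2^{\infty}$; hence $\mathcal{D}(s),\mathcal{D}(r)\colon G_{\varphi}\to G_{\varphi_0}$ are $C^{\infty}$, and by the explicit formulas for $\mathcal{D}(f)$ they restrict to the source and target of $G_2\rightrightarrows G_2^{(0)}$ on $\{t\neq 0\}$ and to those of $G_1^N\rightrightarrows (G_1^{(0)})^N$ on $\{t=0\}$. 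In the same way $u\colon(G_2^{(0)},G_1^{(0)})\to(G_2,G_1)$ and $i\colon(G_2,G_1)\to(G_2,G_1)$ yield $C^{\infty}$ maps $\mathcal{D}(u)$ and $\mathcal{D}(i)$ of the expected form; and $G_1^N\times\{0\}$ is a closed saturated submanifold of $G_{\varphi}$ because $\mathcal{N}_{X}^{M}\times\{0\}$ is always a closed submanifold of $\mathcal{D}_{X}^{M}$, stable under the deformed source and target.

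The only map not defined on $G_2$ itself is the multiplication $m$, which lives on $G_2^{(2)}=G_2\times_{G_2^{(0)}}G_2$. So the technical core is a lemma: the manifold of composable arrows $G_{\varphi}^{(2)}:=G_{\varphi}\times_{G_{\varphi_0}}G_{\varphi}$, formed with $\mathcal{D}(s)$ and $\mathcal{D}(r)$, is canonically $C^{\infty}$-diffeomorphic to the deformation to the normal cone of the pair $(G_2^{(2)},G_1^{(2)})$. Granting this, $m\colon(G_2^{(2)},G_1^{(2)})\to(G_2,G_1)$ is a morphism of pairs --- here one uses that $m$ sends $G_1^{(2)}$ to $G_1$ and that $d_N m$ is the multiplication of $G_1^N$, which is precisely where the hypothesis that $(G_1^{(0)})^N$ is a $G_1^N$-vector bundle enters (it makes $G_1^N$ an honest Lie groupoid and identifies $\mathcal{N}_{G_1^{(2)}}^{G_2^{(2)}}$ with $(G_1^N)^{(2)}$) --- and $\mathcal{D}(m)\colon G_{\varphi}^{(2)}\to G_{\varphi}$ is the multiplication. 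Applying the same lemma to triple products and using that $\mathcal{D}$ is a functor, the groupoid axioms for $G_2$ (and $G_1^N$), being commutative diagrams among $s,r,m,u,i$ over the fibred products $G_2^{(2)},G_2^{(3)}$, are carried by $\mathcal{D}$ to the corresponding diagrams for $G_{\varphi}\rightrightarrows G_{\varphi_0}$.

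The main obstacle is the lemma of the previous paragraph, namely that $\mathcal{D}$ turns the submersion fibred product $G_2\times_{G_2^{(0)}}G_2$ into the deformation to the normal cone of $(G_2^{(2)},G_1^{(2)})$. The cleanest route is to prove that $\mathcal{D}$ sends a transverse fibred square of pairs whose legs are submersions of pairs to a fibred square; this is local, and via $X$-slice charts it reduces to the linear model $\mathbb{R}^p\times\{0\}\subset\mathbb{R}^n$, where it is a direct check with the chart $\psi$ of \eqref{psi}, the rescaling $(x,\xi,t)\mapsto(x,t\xi,t)$ commuting with fibred products along linear submersions. The hypothesis on $(G_1^{(0)})^N$ ensures that all the normal bundles occurring are vector bundles of the expected rank, so that these local models glue and $G_1^N$, $(G_1^N)^{(2)}$, etc., are manifolds.
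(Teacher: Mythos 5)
Your argument is correct and follows exactly the route the paper itself indicates: the paper gives no detailed proof of Proposition \ref{HSimmer}, citing Hilsum--Skandalis and asserting it is ``an immediate consequence of the functoriality of the deformation to the normal cone construction,'' which is precisely the functoriality argument you carry out, with the compatibility of $\mathcal{D}$ with fibred products along submersions (checked in the linear model via $\psi$) supplying the needed detail for the multiplication on $G_{\varphi}^{(2)}$. The only minor imprecision is attributive: the identification $\mathcal{N}_{G_1^{(2)}}^{G_2^{(2)}}\cong (G_1^N)^{(2)}$ already follows from the source and target being submersions, while the hypothesis that $(G_1^{(0)})^N$ is a $G_1^N$-vector bundle is what guarantees the $t=0$ fibre is an honest Lie groupoid, as you otherwise correctly use.
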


One of the interest of these kind of groupoids is to be able to define family indices. First we recall the following elementary  result.

\begin{proposition}\label{deftwistimm}
Given an immersion of Lie groupoids $G_1\stackrel{\varphi}{\rightarrow}G_2$  as above and a twisting $\alpha$ on $G_2$.  There is a canonical twisting $\alpha_\varphi$ on the Lie groupoid 
$G_{\varphi} \rightrightarrows G_{\varphi_0}$,  extending the pull-back  twisting on $G_2\times \mathbb{R}^*$ from $\alpha$.
\end{proposition}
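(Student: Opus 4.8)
The plan is to produce the twisting $\alpha_\varphi$ by the same functoriality that produces the deformation groupoid itself. Recall that a twisting on $G_2$ is a generalized morphism $\alpha\colon G_2\dashrightarrow PU(H)$, i.e. a right principal $PU(H)$-bundle $P_\alpha\to G_2$ which is simultaneously a left $G_2$-bundle over $G_2^{(0)}$. The source and target of $P_\alpha$ are smooth maps $P_\alpha\to G_2$ and $P_\alpha\to G_2^{(0)}$, so we may view the pair $(P_\alpha,G_2)$, together with the anchor data, as living in a suitable category of pairs. First I would apply the deformation-to-the-normal-cone functor \eqref{fundnc} not just to the pair $(G_2,G_1)$ but also to the pair consisting of the total space of $P_\alpha$ and the appropriate submanifold over $G_1$; concretely, since $\varphi$ lifts to a map of the relevant bundles, one gets a map of pairs and hence, by Proposition 3.4 in \cite{Ca4} (i.e. functoriality of $\mathcal{D}$), a $C^\infty$ space $\mathcal{D}(P_\alpha)$ over $G_\varphi$. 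The bundle $P_{\alpha_\varphi}$ is then this deformation space, and over $G_2\times\mathbb{R}^*$ it restricts to $P_\alpha\times\mathbb{R}^*$, the pull-back twisting, as required.

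The key steps, in order, are: (i) identify the normal-bundle data attached to $P_\alpha$, using that $P_\alpha\to G_2$ is a locally trivial $PU(H)$-bundle, so that the normal bundle of the $G_1$-part of $P_\alpha$ inside $P_\alpha$ is canonically $\mathcal{N}_{G_1}^{G_2}$ pulled back along the bundle projection — in particular $\mathcal{D}(P_\alpha)\to G_1^N\times\{0\}\sqcup G_2\times\mathbb{R}^*=G_\varphi$ is again a locally trivial $PU(H)$-bundle; (ii) check that the residual $PU(H)$-action on $\mathcal{D}(P_\alpha)$ (which at $t\ne 0$ is the given action on $P_\alpha$ and at $t=0$ is the induced action on the normal bundle) is smooth and principal, which follows from functoriality of $\mathcal{D}$ applied to the action map $P_\alpha\times PU(H)\to P_\alpha$ together with the fact that $PU(H)$, having a single unit, contributes no normal directions; (iii) check the left $G_\varphi$-action: the left $G_2$-action on $P_\alpha$ covering $G_2\rightrightarrows G_2^{(0)}$ is again a map of pairs, so $\mathcal{D}$ produces a left $G_\varphi$-action on $\mathcal{D}(P_\alpha)$ over $G_{\varphi_0}$ that commutes with the right $PU(H)$-action because commutativity is a closed condition preserved under the deformation; (iv) conclude that $\mathcal{D}(P_\alpha)$ is a generalized morphism $G_\varphi\dashrightarrow PU(H)$, i.e. a twisting $\alpha_\varphi$, whose restriction to the open saturated piece $G_2\times\mathbb{R}^*$ is by construction the pull-back of $\alpha$ along the projection.

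Alternatively — and this is perhaps cleaner to write — one can phrase everything at the level of classifying maps: a twisting is the same as a generalized morphism to $PU(H)$ viewed as a groupoid with one object, and since $G_\varphi\rightrightarrows G_{\varphi_0}$ carries the canonical Lie groupoid structure of Proposition \ref{HSimmer}, it suffices to produce a generalized morphism $G_\varphi\dashrightarrow PU(H)$ extending $\alpha\circ\pi$ where $\pi\colon G_2\times\mathbb{R}^*\to G_2$. This is exactly the content of applying $\mathcal{D}$ to the bi-bundle $P_\alpha$; the verification that a bi-bundle deforms to a bi-bundle is the same functoriality argument, organized so that the "source" (free, proper right $PU(H)$-action) and "target" (surjective submersion onto $G_{\varphi_0}$) conditions each reduce to a statement about $\mathcal{D}$ of a surjective submersion, which is again a surjective submersion.

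The main obstacle I anticipate is purely bookkeeping: making precise the category of pairs in which the pair $(P_\alpha, \text{over }G_1)$ lives and checking that $\varphi$ genuinely induces a morphism of such pairs — in particular that the hypothesis ensuring $(G_1)^N$ is a Lie groupoid (that $(G_1^{(0)})^N$ is a $G_1^N$-vector bundle) also guarantees that the deformed bundle $\mathcal{D}(P_\alpha)$ is locally trivial and that its normal-direction piece over $G_1^N\times\{0\}$ is honestly a principal $PU(H)$-bundle rather than merely a fibration. Everything else is a routine consequence of the functoriality statement \eqref{fundnc} recalled above, applied coordinate-wise to the structure maps (right action, left action, anchors) of the bi-bundle $P_\alpha$.
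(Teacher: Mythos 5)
Your proposal is correct and takes essentially the same route as the paper: the paper's proof likewise just applies the deformation-to-the-normal-cone functor (\ref{fundnc}) to the pair $(P_\alpha, P_{\alpha\circ\varphi})$, with $P_{\alpha\circ\varphi}=G_1^{(0)}\times_{G_2^{(0)}}P_\alpha$, and to the $G_2$-action map viewed as a map of pairs, obtaining the deformed $PU(H)$-principal bundle with compatible $G_\varphi$-action that restricts to the pull-back twisting over $G_2\times\mathbb{R}^*$. The only bookkeeping point to fix in your write-up is that the deformed bundle is a bundle over the unit space $G_{\varphi_0}=(G_1^{(0)})^N\times\{0\}\sqcup G_2^{(0)}\times\mathbb{R}^*$ (with the normal directions coming from $\mathcal{N}_{G_1^{(0)}}^{G_2^{(0)}}$), not over the arrow space $G_\varphi$; this does not affect the argument.
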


\begin{proof}
The proof is a simple application of the functoriality of the deformation to the normal cone construction. Indeed, the twisting $\alpha$ on $G_2$ induces by pullback (or composition of cocycles) a twisting $\alpha\circ \varphi$ on $G_1$. The twisting $\alpha$ on $G_2$ is given by a $PU(H)$-principal bundle $P_{\alpha}$ with a compatible left action of $G_2$, and by definition the twisting $\alpha\circ \varphi$ on $G_1$ is given by the pullback of $P_{\alpha}$ by 
$\varphi_0: G_{1}^{(0)} \to G_{2}^{(0)}$. In particular, $P_{\alpha\circ \varphi}=G_{1}^{(0)}\times_{G_{2}^{(0)}}P_{\alpha}$ 
Hence the action map 
$G_2\times_{G_{2}^{(0)}}P_{\alpha}\to P_{\alpha}$ can be considered as an application in the category of pairs:

$$(G_2\times_{G_{2}^{(0)}}P_{\alpha},G_1\times_{G_{1}^{(0)}}P_{\alpha\circ\varphi})\longrightarrow
(G_{2}^{(0)}\times_{G_{2}^{(0)}}P_{\alpha},G_{1}^{(0)}\times_{G_{1}^{(0)}}P_{\alpha\circ \varphi}).$$

We can then apply the deformation to the normal cone functor to obtain the desire $PU(H)$-principal bundle with a compatible $G_{\varphi}$-action, which gives the desired twisting.
\end{proof}


\section{Twisted equivariant K-theory}
 The  crucial  diference  to  \cite{barcenasespinozajoachimuribe} is  the  use  of  graded  Fredholm  bundles, which  are  needed  for  the   definition  of  the multiplicative  structure.  

Let $\HH$ be a separable Hilbert space and 
$$\UU(\HH):= \{ U : \HH \to \HH \mid U\circ U^*= U^*\circ U = \mbox{Id} \}$$ the group
of unitary operators acting on $\HH$. As is noted in \cite{atiyahsegal} there are some issues when consider the \emph{norm} topology, then we use the \emph{compact-open} topology (for an account of the compact-open topology see \cite[Appendix 1]{atiyahsegal}).  Let ${\rm End}(\HH)$ denote the space of endomorphisms
of the Hilbert space and endow ${\rm End}(\HH)_{c.o.}$ with the compact open topology. Consider the inclusion
\begin{align*}
\UU(\HH) &\to {\rm End}(\HH)_{c.o.} \times {\rm End}(\HH)_{c.o.}\\
U &\mapsto (U,U^{-1})
\end{align*}
and induce on $\UU(\HH)$ the subspace topology. Denote the space of unitary operators with this induced topology
by $\UU(\HH)_{c.o.}$ and note that this is different from the usual compact open topology on $\UU(\HH)$. Unfortunately the group $\UU(\HH)_{c.o}$ fails to be a topological group, the composition is continuous only on compact subspaces. Let  $\UU(\HH)_{c.g}$  be  the  compactly  generated  topology  associated  to the  compact  open  topology,  and  topologize the  group $P\UU(\HH)$  from the exact  sequence
$$1\to S^1\to  \UU(\HH)_{c.g.}\to P\UU(\HH)\to  1 .$$


\begin{definition}
Let $\HH$ be a separable Hilbert  space. The  space  $\Fred'(\HH)$
consist of pairs $(A,B)$ of bounded operators on $\HH$ such that
$AB -1$ and $BA -1$ are compact operators. Endow $\Fred'(\HH)$
with the topology induced by the embedding \begin{eqnarray*}
\Fred'(\HH) & \to & {\mathsf{B}}(\HH) \times  {\mathsf{B}}(\HH)
\times {\mathsf{K}}(\HH)
\times {\mathsf{K}}(\HH) \\
(A,B) & \mapsto & (A,B,AB-1, BA-1)
\end{eqnarray*}
where ${\mathsf{B}}(\HH)$ denotes the bounded operators on $\HH$ with
the compact open topology and ${\mathsf{K}}(\HH)$ denotes the compact
operators with the norm topology.

\end{definition}

We denote by $\Hg=\HH\oplus \HH$ a $\IZ_2$-graded, infinite  dimensional  Hilbert space.

\begin{definition}

Let $U(\Hg)_{c.g.}$ be  the  group  of  even, unitary  operators  on the  Hilbert  space $\Hg$ which  are  of  the  form 
$$\begin{pmatrix}  u_1& 0  \\ 0 & u_2      \end{pmatrix},$$

where $u_i$ denotes  a  unitary  operator  in the  compactly generated  topology defined as before. 

We  denote  by  
$P\UU(\Hg)$ the  group $U(\Hg)_{c.g.}/ S^1$ and recall the  central  extension 
$$1\to  S^1\to \UU(\Hg)\to P\UU(\Hg)\to 1$$ 

\end{definition}

\begin{definition}
The space $\Fred''(\Hg)$ is  the  space  of pairs $(\widehat{A},\widehat{B})$ of self-adjoint, bounded operators of degree 1 defined on $\Hg$ such that $\widehat{A}\widehat{B}-I$ and $\widehat{B}\widehat{A}-I$ are compact. 
\end{definition}

Given a $\IZ/2$-graded Hilbert space  $\Hg$, 
the space $\Fred''(\Hg)$ is homeomorphic to $\Fred'(H)$.

\begin{definition}
We denote by $\Fred^{(0)}(\Hg)$ the space of self-adjoint degree 1 Fredholm operators ${A}$ in $\Hg$ such that ${A}^2$ differs from the identity by a compact operator, with the topology coming from the embedding ${A}\mapsto ({A},{A}^2-I)$ in $\BB(\HH)\times\KK(\HH)$.

\end{definition}

The  following  result  was  proved  in \cite{atiyahsegal}, Proposition 3.1 : 

\begin{proposition}
The space $\Fred^{(0)}(\Hg)$ is a deformation retract of $\Fred''(\Hg)$.
\end{proposition}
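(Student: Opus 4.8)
The plan is to read this off the classical polar--decomposition deformation retraction, carried out in the Calkin algebra and then lifted to operators through spectral calculus. First I would fix the inclusion: the ``doubling'' map $j\colon\Fred^{(0)}(\Hg)\to\Fred''(\Hg)$, $A\mapsto(A,A)$, is well defined, because $A\cdot A-I=A^{2}-I$ is compact, and it is a homeomorphism onto its image, because the topology of $\Fred''(\Hg)$ restricts on the diagonal to the one used to define $\Fred^{(0)}(\Hg)$. What must then be produced is a retraction $r\colon\Fred''(\Hg)\to\Fred^{(0)}(\Hg)$ together with a homotopy from $\id$ to $j\circ r$. Since $j$ is a closed cofibration (a closed inclusion with the homotopy extension property, which one checks directly) it is in fact enough to see that $j$ is a homotopy equivalence, i.e.\ to produce $r$ with $r\circ j\simeq\id$ and $j\circ r\simeq\id$. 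It would also be legitimate, and perhaps cleaner, to first transport the whole problem across the homeomorphism $\Fred''(\Hg)\cong\Fred'(\HH)$ recalled above, so that one deals with pairs of mutually parametrix bounded operators; I will nonetheless phrase the argument directly on $\Fred''(\Hg)$.

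Next I would take $r$ to be the \emph{normalized sign}. Given $(\widehat A,\widehat B)\in\Fred''(\Hg)$, the operator $\widehat A$ is self--adjoint, of degree $1$, and Fredholm (since $\widehat B$ is a two--sided parametrix for it); hence $0$ is an isolated point of the spectrum of $\widehat A^{2}$, the kernel $\ker\widehat A$ is a finite--dimensional graded subspace, and, writing $p$ for the orthogonal projection onto it, I would set
\[
r(\widehat A,\widehat B):=\widehat A\,(\widehat A^{2}+p)^{-1/2},
\]
the partial symmetry equal to $\widehat A|\widehat A|^{-1}$ off $\ker\widehat A$ and to $0$ on it. This is again self--adjoint of degree $1$, and $r(\widehat A,\widehat B)^{2}-I=-p$ has finite rank, so $r(\widehat A,\widehat B)\in\Fred^{(0)}(\Hg)$. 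In the Calkin algebra $\BB(\HH)/\KK(\HH)$ the class of $r(\widehat A,\widehat B)$ is exactly the polar part $a|a|^{-1}$ of the (invertible, self--adjoint, odd) class $a$ of $\widehat A$, and $t\mapsto a|a|^{-t}$ is the standard strong deformation retraction of the odd self--adjoint invertibles onto the odd self--adjoint symmetries. I would build the homotopy from $\id$ to $j\circ r$ on $\Fred''(\Hg)$ by lifting this path through the functional calculus of $\widehat A$, passing first through the bounded transform $\widehat A(1+\widehat A^{2})^{-1/2}$ and then to $r(\widehat A,\widehat B)$, with a reparametrization of time keeping the endpoint equal to $r(\widehat A,\widehat B)$. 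Restricted to the diagonal this very path shows $r\circ j\simeq\id$: for $A\in\Fred^{(0)}(\Hg)$ one has $|A|-I$ compact, so $A$ and $r(A,A)$ differ by a compact operator which the path contracts to $0$.

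The hard part will be the continuity of $r$, and of the homotopy, in the topologies in force here --- the compact--open (compactly generated) topology on bounded operators together with the norm topology on the compact error terms. Two phenomena threaten it: the dimension of $\ker\widehat A$ can jump, and the spectral gap of $\widehat A^{2}$ away from its kernel can close. Both are controlled precisely because the parametrix $\widehat B$ is part of the datum rather than something one must choose continuously. Along a convergent net in $\Fred''(\Hg)$ the compact operators $I-\widehat B\widehat A$ converge in norm, so the kernels (with $\ker\widehat A\subseteq\ker(I-\widehat B\widehat A)$) behave upper semicontinuously; and the estimate $\|\widehat A v\|\ge\|\widehat B\|^{-1}\,\|(I-\widehat B\widehat A)v\|$ for $v\in(\ker\widehat A)^{\perp}$ yields a uniform spectral gap for $\widehat A^{2}$ there. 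This is exactly why one works with $\Fred''(\Hg)$ rather than with bare self--adjoint Fredholm operators, and why the nonstandard topologies of \cite{atiyahsegal} are designed so that the spectral calculus in play stays continuous. Granting these two estimates, the continuity of $r$ and of the lifted polar homotopy is routine; together with the cofibration property of $j$ they exhibit $\Fred^{(0)}(\Hg)$ as a deformation retract of $\Fred''(\Hg)$. The estimates are the crux --- everything else is polar decomposition in the Calkin algebra transported by functional calculus.
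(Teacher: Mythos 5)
Your strategy founders on the continuity of $r$, and the two estimates you invoke to rescue it are false, so the gap is genuine. The map $r(\widehat A,\widehat B)=\widehat A(\widehat A^{2}+p)^{-1/2}$ is the exact polar part (the sign of $\widehat A$ with the kernel sent to $0$), and this is discontinuous on $\Fred''(\Hg)$ --- already at points of the image of $j$. Concretely, let $T_0=\mathrm{diag}(0,1,1,\dots)$ on $\HH$, let $\widehat A_0$ be the odd self--adjoint operator on $\Hg=\HH\oplus\HH$ with $T_0$ in both off--diagonal corners, take $\widehat B_0=\widehat A_0$ (note $\widehat A_0^2-I$ is compact), and let $\widehat A_t$ be obtained by replacing $T_0$ with $\mathrm{diag}(t,1,1,\dots)$. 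Then $(\widehat A_t,\widehat B_0)\to(\widehat A_0,\widehat B_0)$ in $\Fred''(\Hg)$ as $t\to 0^+$ (all four coordinates of the defining embedding converge in norm), but $r(\widehat A_t,\widehat B_0)=\widehat I$ for $t>0$ while $r(\widehat A_0,\widehat B_0)$ annihilates the two kernel vectors of $\widehat A_0$; on the fixed vector $e_0\oplus 0$ the images stay at distance $1$, so there is no compact--open convergence, and the compact coordinate $r^2-I=-p$ jumps from $0$ to a rank--two projection, so there is no convergence in the $\KK(\Hg)$--norm coordinate either. The parametrix cannot prevent this: $\widehat B$ controls $\widehat A$ only modulo compacts, i.e.\ its essential spectrum, and says nothing about small nonzero eigenvalues --- one may always add to $\widehat A$ an arbitrarily small finite--rank odd self--adjoint perturbation without changing $\widehat B$ and without leaving $\Fred''(\Hg)$. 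Correspondingly, your inclusion $\ker\widehat A\subseteq\ker(I-\widehat B\widehat A)$ is wrong ($\ker\widehat A$ lies in the $1$--eigenspace of $I-\widehat B\widehat A$, i.e.\ in $\ker(\widehat B\widehat A)$), and the correct form of your inequality is $\|\widehat Av\|\ \ge\ \|\widehat B\|^{-1}\bigl(\|v\|-\|(I-\widehat B\widehat A)v\|\bigr)$, which yields no uniform spectral gap on $(\ker\widehat A)^{\perp}$.

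This discontinuity of the kernel projection and of the naive sign is precisely the classical difficulty with spaces of self--adjoint Fredholm operators, and it is why any correct argument must avoid the bare polar part: one either works on open sets where a spectral cut at $\pm\varepsilon$ misses the spectrum, or deforms by functional calculus in a way that treats the near--zero spectrum continuously in the pair $(\widehat A,\widehat B)$. Be aware also that the paper itself does not prove this proposition; it quotes Proposition 3.1 of \cite{atiyahsegal}, whose construction is designed exactly to sidestep the trap above, and in whose topologies even the passage to the Calkin algebra is not continuous, so ``lifting'' the Calkin--level retraction $a\mapsto a|a|^{-t}$ is itself not a routine step. As written, your proposal does not constitute a proof.
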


  
In particular, the above discussion implies that $\Fred^{(0)}(\Hg)$ is a representing space for $K$-theory. The group $\UU(\Hg)_{c.g.}$ of degree 0 unitary operators on $\Hg$ with the compactly generated topology  acts continuously by conjugation on $\Fred^{(0)}(\Hg)$, therefore the group $P\calu(\Hg)$ acts continuously on $\Fred^{(0)}(\Hg)$ by conjugation. In \cite{barcenasespinozajoachimuribe} twisted $K$-theory for proper actions of discrete groups was defined using the representing space $\Fred '(\HH)$, but in order to have multiplicative structure we proceed using $\Fred^{(0)}(\Hg)$.

Let us choose the operator 
\begin{equation*}
  \widehat{I}=\begin{pmatrix}
  0&I\\I&0
  \end{pmatrix}.
  \end{equation*}

as the base point in $\Fred^{(0)}(\Hg)$.

Choosing  the identity  as  a  base point on the space  $\Fred^{'}(\mathcal{H})$,  gives a  diagram  of  pointed  maps

$$\xymatrix{\Fred^{(0)}(\Hg)\ar[r]^{i} &  \Fred^{''}(\Hg)\ar[d]^{r}\ar[r]^f & \Fred^{'}(\HH) \\ & \Fred^{(0)}(\Hg) &  } , $$
where  $i$  denotes  the inclusion, $r$   is  a  strong  deformation  retract  and $f$  is a  homeomorphism. Moreover,  the  maps  are  compatible  with the conjugation  actions  of  the  groups $\UU(\Hg)_{c.g.}$,  $\UU(\HH)_{c.g.}$ and  the  map $ \UU(\Hg)_{c.g.} \to \UU(\HH)_{c.g.}$.

Let $X$ be a proper $G$-space and  let $P \to X$  be a projective unitary 
$G$-equivariant bundle over $X$. Denote by $\widehat{P}$  the  projective  unitary  bundle  obtained  by  performing  the tensor  product  with the  trivial bundle $\mathbb{P}(\Hg)$, $\widehat{P}= P\otimes\mathbb{P}(\Hg)$.

 The space of Fredholm
operators  is endowed with a continuous right action
of the group $P\calu(\widehat{\calh})$ by conjugation, therefore we can take
the associated bundle over $X$
$$\Fred^{(0)}(\widehat{P}) := \widehat{P} \times_{P\calu(\widehat{\calh})} \Fred^ {(0)}(\widehat{\calh}),$$
  and with the induced $G$ action given by
 $$g \cdot [(\lambda, A))] := [(g \lambda,A)]$$
for $g$ in $G$, $\lambda$ in $\widehat{P}$ and $A$ in $\Fred^{(0)}(\widehat{\calh})$.

Denote by $$\Gamma(X; \Fred^{(0)}(\widehat{P}))$$ the space of sections of the
bundle $\Fred^{(0)}(\widehat{P}) \to X$ and choose as base point in this space the
section which chooses the base point $\widehat{I}$ on the fibers. This section
exists because the ${P}\calu(\widehat{\calh})$ action on ${\widehat{I}}$
is trivial, and therefore $$X \cong \widehat{P}/P\calu(\widehat{\calh}) \cong \widehat{P}
\times_{P\calu(\widehat{\calh})} \{{\widehat{I}} \} \subset \Fred^{(0)}(\widehat{P});$$
let us denote this  section by $s$.

\begin{definition} \label{definition K-theory of X,P}
  Let $X$ be a connected proper $G$-space and $P$ a projective unitary
  $G$-equivariant bundle over $X$. The {\it{Twisted $G$-equivariant
  K-theory}} groups of $X$ twisted by $P$ are defined as the  homotopy  groups  of  the  $G$-equivariant  sections
  $$K^{-p}_G(X;P) := \pi_p \left( \Gamma(X;\Fred^{(0)}(\widehat{P}))^{X\rtimes G}, s \right)$$
  where the base point $s= \widehat{I}$ is the section previously constructed.

\end{definition}

\subsection{Additive structure}
There  exists  a  natural  map  
$$\Gamma(X;\Fred^{(0)}(\widehat{P}))^{X\rtimes G} \times \Gamma(X;\Fred^{(0)}(\widehat{P}))^{X\rtimes G} \to \Gamma(X;\Fred^{(0)}(\widehat{P}))^{X\rtimes G}, $$ 
 inducing  an  abelian  group  structure  on  the twisted  equivariant  $K$- theory  groups,  which   we  will  define  below. Consider  for this  the  following  commutative  diagram.
 
 $$\xymatrix{\Fred^{(0)}(\Hg) \times \Fred^{(0)}(\Hg) \ar@{-->}[d]\ar[r]^{f\circ i } & \Fred^{'}(\Hg) \times \Fred^{'}(\Hg) \ar[d]_{\quad \circ \quad} \\\Fred^{(0)}(\Hg) & \Fred^{'}(\Hg) \ar[l]_{f^{-1}\circ r} }  $$
 where  the  vertical  map  denotes  composition. 
As  the  maps  involved  in the  diagram  are  compatible  with the  conjugation  actions  of  the   groups $\UU(\Hg)_{c.g}$, respectively $ \UU(\HH)_{c.g}$ and  $G$,  for  any  projective  unitary $G$-equivariant  bundle  $P$, this  induces  a pointed   map 

$$\Gamma(X;\Fred^{(0)}(\widehat{P}))^{X\rtimes G}, s) \times (\Gamma(X;\Fred^{(0)}(\widehat{P}))^{X\rtimes G}, s )\to(\Gamma(X;\Fred^{(0)}(\widehat{P}))^{X\rtimes G}, s) .$$
Which  defines  an additive structure  in  $K^{-p}_G(X;P)$.

\subsection{Multiplicative structure}\label{sectionmultiplicative}
We  define an associative  product on twisted K-theory.

\begin{equation}\label{defbullet}
K^{-p}_G(X;P)\times K^{-q}_G(X;P')\rightarrow K^{-(p+q)}_G(X;P\otimes P').
\end{equation}

Induced by the map 
$$(A,A')\mapsto A\widehat{\otimes} I+I\widehat{\otimes} A'$$

defined in $\Fred^{0}({\Hg})$, and  $\widehat{\otimes}$ denotes  the  graded  tensor  product, see  \cite{atiyahsegal} page 20 for  more details. We denote this product by $\bullet$.

We will show next that the product above does not depend on the isomorphism classes of the bundles $P$ and $P'$, during the proof we will explain in detail the meaning of the bundle $P\otimes P'$ used above.

\begin{proposition}\label{propprodwelldef}
Let $G$ be a Lie groupoid and let $X$ be a $G$-proper manifold. Consider two isomorphisms $f:P\to Q$ and $g:P'\to Q'$ of $PU(H)$-principal $G$-bundles over $X$. We have a commutative diagram of the form
\begin{equation}
\xymatrix{
K^{-p}_G(X;P)\times K^{-q}_G(X;P')\ar[d]_-{\widetilde{f}\times \widetilde{g}}\ar[r]^-\bullet & K^{-(p+q)}_G(X;P\otimes P')\ar[d]^-{\widetilde{f\otimes g}}\\
K^{-p}_G(X;Q)\times K^{-q}_G(X;Q') \ar[r]_-\bullet &K^{-(p+q)}_G(X;Q\otimes Q')
}
\end{equation}
where the morphisms denoted by $\widetilde{(\cdot)}$ are canonical isomorphisms induced by $f$ and $g$.
\end{proposition}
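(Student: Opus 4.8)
The plan is to unwind the definitions of the four twisted $K$-theory groups as homotopy groups of equivariant section spaces and to exhibit the commutativity of the diagram already at the level of these section spaces (or rather at the level of the Fredholm bundles), before passing to homotopy groups. First I would recall what the bundle $P\otimes P'$ means: given two $PU(H)$-principal $G$-bundles $P$ and $P'$ over $X$, one forms the $PU(H\otimes H)$-principal bundle $P\times_X P'/\!\sim$ via the homomorphism $PU(H)\times PU(H)\to PU(H\otimes H)$, and then identifies $H\otimes H$ with $H$ (equivalently $\Hg\otimes\Hg$ with $\Hg$) using a fixed unitary isomorphism; I would make the choice of this identification explicit and note that it is the same choice used on both the $P$-side and the $Q$-side, so it will cancel. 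The isomorphism $f\colon P\to Q$ of $PU(H)$-principal $G$-bundles then induces an isomorphism $\widehat f\colon \widehat P\to\widehat Q$ (tensoring with $\mathbb P(\Hg)$), hence a $G$-equivariant homeomorphism of the associated Fredholm bundles $\Fred^{(0)}(\widehat P)\to\Fred^{(0)}(\widehat Q)$ sending the base-point section $s$ to $s$, which is exactly the map inducing $\widetilde f$ on homotopy groups; similarly for $g$ and for $f\otimes g$.

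Next I would verify the commutativity of the square at the bundle level. The product $\bullet$ is induced fibrewise by the map $(A,A')\mapsto A\,\widehat\otimes\, I+I\,\widehat\otimes\, A'$ on $\Fred^{(0)}(\Hg)$, transported through the associated-bundle construction using the homomorphism $PU(H)\times PU(H)\to PU(H\otimes H)$ and the chosen identification $H\otimes H\cong H$. The key point is naturality: the bundle isomorphisms $\widehat f$ and $\widehat g$ are built from the structure-group actions, and the formula $A\,\widehat\otimes\, I+I\,\widehat\otimes\, A'$ is manifestly equivariant for the diagonal action of $PU(H)\times PU(H)$ through $PU(H\otimes H)$ on the target; since $\widehat{f\otimes g}$ is by construction the map on associated bundles induced by $(\widehat f,\widehat g)$ through the same group homomorphism, the outer square of Fredholm-bundle maps commutes on the nose. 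Applying $\Gamma(X;-)^{X\rtimes G}$ (a functor that preserves this commutativity and sends the base-point section to the base-point section) and then $\pi_p\times\pi_q\to\pi_{p+q}$ gives the claimed commutative diagram of abelian groups.

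I expect the main obstacle to be purely bookkeeping rather than conceptual: one must be careful that the homeomorphism $\Fred^{(0)}(\Hg)\to\Fred^{(0)}(\widehat{\Hg\otimes\Hg})$ (or whatever carrier realizes $P\otimes P'$) is chosen consistently, that the graded tensor product $\widehat\otimes$ interacts correctly with the $\IZ_2$-grading when one identifies $\Hg\otimes\Hg$ with $\Hg$, and that all maps remain pointed with respect to the chosen base point $\widehat I$ (here one uses, as in the additive-structure discussion, that the relevant projective-unitary actions fix $\widehat I$). A secondary technical annoyance is that $\UU(\Hg)_{c.g.}$ is not quite a topological group, so one should phrase equivariance of the tensor-sum map in terms of the genuine action on $\Fred^{(0)}$ by conjugation, exactly as was done for the additive structure above; once that is in place the diagram commutes for formal reasons and there is nothing left to compute. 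Finally, combining this proposition with Proposition~\ref{propprodwelldef}'s statement and the already-established independence of twisted $K$-theory on the isomorphism class of the bundle yields that $\bullet$ descends to a product depending only on the classes represented by $P$ and $P'$.
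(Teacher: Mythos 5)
Your proposal is correct and follows essentially the same route as the paper's own proof: both construct the bundle $\widehat{P}\otimes\widehat{P'}$ via the homomorphism $P\UU(\Hg)\times P\UU(\Hg)\to P\UU(\Hg\widehat{\otimes}\Hg)$, define the product at the level of Fredholm bundles by the fibrewise tensor-sum map, and deduce commutativity from the equivariance identity $(\varphi\cdot A)\,\sharp\,(\phi\cdot B)=(\varphi\widehat{\otimes}\phi)\cdot(A\,\sharp\, B)$ before passing to invariant sections and homotopy groups. The only difference is cosmetic: the paper carries out the check with explicit local transition functions, while you phrase it as naturality of the associated-bundle construction.
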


\begin{proof}
Remember the action of the group $P\UU(\Hg)$ on $\Fred^{(0)}(\Hg)$ by conjugation 
\begin{align*}
	P\UU(\Hg)\times\Fred^{(0)}(\Hg)&\to\Fred^{(0)}(\Hg)\\
	(\varphi,C)&\mapsto \varphi\cdot C=\varphi\circ C\circ\varphi^{-1}.
	\end{align*} 
Consider also the operation defined on Fredholm operators
	
	\begin{align*}
		\Fred^{(0)}(\Hg)\times\Fred^{(0)}(\Hg)&\to\Fred^{(0)}(\Hg)\\
		(A,B)&\mapsto A\sharp B=A\widehat{\otimes}I\oplus I\widehat{\otimes}B.
		\end{align*}
	We have a natural map induced by the graded tensor product
$$P\UU(\Hg)\times P\UU(\Hg)\xrightarrow{\widehat{\otimes}} P\UU(\Hg\widehat{\otimes}\Hg).$$

Let $\widehat{P}$ and $\widehat{P'}$ be the stable projective unitary $G$-bundles over $X$ associated to $P$ and $P'$, whose transitions maps are 
$$\varphi_{\alpha\beta}:U_\alpha\cap U_\beta\to P\UU(\Hg)\text{ for $P$ and }\phi_{\alpha\beta}:U_\alpha\cap U_\beta\to P\UU(\Hg)\text{ for $P'$}.$$ 

We define the stable projective unitary $G$-bundle $$P\UU(\Hg\widehat{\otimes}\Hg)\to \widehat{P}\otimes \widehat{P'}\to X$$whose transitions maps are

 \begin{align*}\varphi_{\alpha\beta}\widehat{\otimes}\phi_{\alpha\beta}:U_\alpha\cap U_\beta&\to P\UU(\Hg\widehat{\otimes}\Hg)\\x&\to\varphi_{\alpha\beta}(x)\widehat{\otimes}\phi_{\alpha\beta}(x).\end{align*}
 
 Now consider the associated bundles $\FredP$ and $\FredP'$. We can define then $\Fred^{(0)}(\widehat{P}\widehat{\otimes}\widehat{P})$ whose transitions maps are
 \begin{align*}
 	(U_\alpha\cap U_\beta)\times\Fred^{(0)}(\Hg\widehat{\otimes}\Hg)&\to\Fred^{(0)}(\Hg\widehat{\otimes}\Hg)\\
 	(x,C)&\to\left(\varphi_{\alpha\beta}\widehat{\otimes}\phi_{\alpha\beta}\right)(x)\cdot C
 	\end{align*}
 	
 The bundle $\Fred^{(0)}(\widehat{P}\widehat{\otimes}\widehat{P})$ is endowed with a multiplication map
 $$\FredP\times\FredP'\stackrel{m}{\longrightarrow}\Fred^{(0)}(P\otimes P'),$$
 defined locally as
 \begin{align*}
 	U_\alpha\times\left(\Fred^{(0)}(\Hg)\times\Fred^{(0)}(\Hg)\right)&\xrightarrow{m} U_\alpha\times\Fred^{(0)}(\Hg)\\
 	(x,A,B)&\mapsto (x,A\sharp B)
 	\end{align*}
 	As for every $\varphi,\phi\in P\UU(\Hg)$ we have that
 	$$(\varphi\cdot A)\sharp(\phi\cdot B)=(\varphi\widehat{\otimes}\phi)\cdot(A\sharp B),$$
 	then $m$ is a well defined map 
 	$$\FredP\times\FredP'\to\Fred^{(0)}(\widehat{P}\otimes \widehat{P'}).$$

We have to show that the following diagram is commutative
	$$\xymatrix{
		\FredP\times\FredP'\ar[rr]^m\ar[d]_-{\bar{f}\times\bar{f}'}&&\Fred(\widehat{P}\otimes\widehat{P'})\ar[d]^{\bar{f}\otimes\bar{f}'}\\
		\Fred^{(0)}(\widehat{Q})\times \Fred^{(0)}(\widehat{Q'})\ar[rr]_-{m_1}&&\Fred^{(0)}(\widehat{Q}\otimes \widehat{Q'})
		}$$

On fibers we have the following
\begin{align*}
	(\bar{f}\otimes\bar{f}')(m(x,A),(x,B))&=(\bar{f}\otimes\bar{f}')(x,A\sharp B)\\
	&=(x,(\bar{f}\otimes\bar{f}')(x)\cdot(A\sharp B))\\&=(x,(f(x)\cdot A)\sharp (f'(x)\cdot B))\end{align*}	
On the other hand
\begin{align*}
	m_1\left((\bar{f}\times\bar{f}')((x,A),(x,B))\right)&=m_1(f(x)\cdot A,f'(x)\cdot B)\\
	&=(x,(f(x)\cdot A)\sharp (f'(x)\cdot B))\end{align*}	
As all above maps are maps of $PU(H)$-principal $G$-bundles then the diagram is commutative.
	\end{proof}


\subsection{Topologies  on  Fredholm  Operators}\label{subsectionTopFred}

In \cite{tuxustacks} a Fredholm picture of twisted K-theory is introduced. Denote by   $\Fred'(\HH)_{s*}$ the space whose elements are the same as $\Fred'(\HH)$ but with the strong $^\ast$-topology on $B(\HH)$.

\begin{definition}\cite[Thm. 3.15]{tuxustacks} \label{definition K-theory Tu-Xu}
  Let $X$ be a connected $G$-proper space and $P$ a projective unitary
  $G$-equivariant bundle over $X$. The {\it{Twisted $G$-equivariant
  K-theory}} groups of $X$ (in the sense of Tu-Xu-Laurent) twisted by $P$ are defined as the  homotopy  groups  of  the  $G$-equivariant  strong${}^\ast$-continuous sections
  $$\IK^{-p}_G(X;P) := \pi_p \left( \Gamma(X;\Fred'({P})_{s^*})^{X\rtimes G}, s \right).$$
  The bundle $\Fred'({P})_{s^*}$ is defined in a similar way as $\Fred'({P})$.

\end{definition}
We will prove that the functors $K_G^\ast(-,P)$ and $\IK_G^\ast(-,P)$ are naturally equivalent.
\begin{lemma}
  The spaces $\Fred'(\HH)$ and $\Fred'(\HH)_{s^*}$ are $PU(\HH)$-weakly homotopy equivalent.
\end{lemma}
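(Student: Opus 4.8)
The plan is to avoid comparing the two topologies on $\Fred'(\HH)$ directly---there the operators are not self-adjoint, adjunction is not compact-open continuous, and neither topology refines the other---and instead to route the comparison through the graded self-adjoint model $\Fred^{(0)}(\Hg)$, on which the strong ${}^*$-topology collapses to the strong operator topology and, on norm-bounded subsets, to the compact-open topology as well.

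First I would produce the strong ${}^*$-analogue of the diagram recalled just above. The homeomorphism $f\colon\Fred''(\Hg)\to\Fred'(\HH)$ reads off the $(1,2)$- and $(2,1)$-blocks with respect to $\Hg=\HH\oplus\HH$, and block extraction and assembly are compositions with the fixed coordinate projections, hence continuous for each of the compact-open and the strong ${}^*$-topologies; the same holds for its inverse, and everything commutes with conjugation by $\UU(\Hg)_{c.g.}$. The deformation retraction $r\colon\Fred''(\Hg)\to\Fred^{(0)}(\Hg)$ of \cite{atiyahsegal} is built from additions, compositions and continuous functional calculus of self-adjoint operators; multiplication is jointly strong ${}^*$-continuous on norm-bounded sets, functional calculus of self-adjoint operators is strong ${}^*$-continuous, and on any compact subset of $\Fred''(\Hg)_{s^*}$ the operators are norm-bounded (for each $\xi\in\Hg$, evaluation at $\xi$ is continuous into $\Hg$, so the family is pointwise bounded and the uniform boundedness principle applies). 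Hence the same formulas define a $PU(\HH)$-equivariant strong deformation retraction $\Fred''(\Hg)_{s^*}\to\Fred^{(0)}(\Hg)_{s^*}$. Together with the (topology-independent) homeomorphism $f$, this makes $\Fred'(\HH)$ $PU(\HH)$-equivariantly homotopy equivalent to $\Fred^{(0)}(\Hg)$ and $\Fred'(\HH)_{s^*}$ to $\Fred^{(0)}(\Hg)_{s^*}$.

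The core step is that the identity map $\Fred^{(0)}(\Hg)\to\Fred^{(0)}(\Hg)_{s^*}$ is a weak homotopy equivalence compatible with the $PU(\HH)$-actions. It is continuous, since the operators are self-adjoint and hence carry the strong operator topology, which is coarser than the compact-open one. Conversely, let $\varphi\colon Z\to\Fred^{(0)}(\Hg)_{s^*}$ be continuous with $Z$ compact (take $Z=S^n$ and $Z=S^n\times[0,1]$, and the analogues on fixed-point subspaces, whose points are again self-adjoint operators). By definition of the topology, $z\mapsto\varphi(z)^2-I$ is continuous into $\KK(\HH)$ with the norm topology, so its image is norm-bounded by some $M$, and $\|\varphi(z)\|^2=\|\varphi(z)^2\|\le 1+M$ for all $z$; thus $\varphi(Z)$ lies in a norm-bounded subset of $\Fred^{(0)}(\Hg)$. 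On norm-bounded sets of self-adjoint operators the compact-open, strong operator and strong ${}^*$ topologies coincide, and the remaining coordinate of the embedding defining either model carries the norm topology; so $\varphi$ is automatically continuous into $\Fred^{(0)}(\Hg)$. Applied to spheres, to homotopies between maps of spheres, and on fixed-point subspaces, this shows that the identity induces isomorphisms on all equivariant homotopy groups. Composing with the two equivalences of the previous paragraph yields the asserted $PU(\HH)$-weak homotopy equivalence $\Fred'(\HH)\simeq\Fred'(\HH)_{s^*}$.

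The one step that is more than bookkeeping is the transfer of the structural results of \cite{atiyahsegal} to the strong ${}^*$-topology: the joint strong ${}^*$-continuity of multiplication on norm-bounded sets, the strong ${}^*$-continuity of functional calculus of self-adjoint operators, and the automatic norm-boundedness of compact subsets of all the Fredholm spaces involved. Everything else is formal, precisely because one works in the self-adjoint degree-$1$ picture rather than on $\Fred'(\HH)$ directly.
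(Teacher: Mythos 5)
Your argument is correct in substance, but it follows a genuinely different route from the paper. The paper's own proof is representability-theoretic: it argues that $\Fred'(\HH)_{s^*}$ is again a classifying space for equivariant K-theory by rerunning Atiyah--Segal's Proposition A.22 in the strong${}^*$-setting, the key input being that the contracting homotopy $h_t$ of $GL(\HH)$ from Proposition A.21 is strong${}^*$-continuous, so that both spaces represent the same functor and are therefore $PU(\HH)$-weakly equivalent. You instead compare the two topologies directly, routing through the graded self-adjoint model $\Fred^{(0)}(\Hg)$, where the key observations are that compact families are norm-bounded (uniform boundedness, or the $C^*$-identity $\|A\|^2=\|A^2\|$) and that on norm-bounded sets of self-adjoint operators the compact-open, strong and strong${}^*$ topologies coincide; this makes the identity map a weak equivalence by a purely point-set compactness argument, with no appeal to representability. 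What your route buys is a more self-contained and elementary comparison at the level of homotopy groups; what it costs is having to transport the Atiyah--Segal retraction $r\colon \Fred''(\Hg)\to\Fred^{(0)}(\Hg)$ to the strong${}^*$-topology, a verification exactly parallel in spirit to the paper's check on $h_t$, and which you rightly flag as the only non-formal step. Two small cautions: your boundedness argument only gives continuity of $r$ on norm-bounded (in particular compact) subsets, so claiming a genuine strong deformation retraction of $\Fred''(\Hg)_{s^*}$ onto $\Fred^{(0)}(\Hg)_{s^*}$ is an overstatement --- though continuity on compacta, together with the fact that the retraction homotopy preserves norm-bounded sets, is all the weak-equivalence argument needs; and the equivariant statement requires the comparison on fixed-point sets of the relevant (compact) subgroups, which your remark that fixed points are again self-adjoint operators and that $r$, $f$ and the identity are equivariant does cover, but which deserves to be said at the point where the lemma is applied to section spaces.
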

\begin{proof}
  The strategy is to prove that $\Fred'(\HH)_{s^*}$ is a representing of equivariant K-theory. The same proof for $\Fred'(\HH)$ in \cite[Prop. A.22]{atiyahsegal} applies. In particular $GL(\HH)_{s^\ast}$ is $G$-contractible because the homotopy $h_t$ constructed in \cite[Prop. A.21]{atiyahsegal} is continuous in the strong$^\ast$-topology and then the proof applies.
\end{proof}

Using the above lemma one can prove that the identity map defines an equivalence between (twisted) cohomology theories $K_G^\ast(-,P)$ and $\IK_G^\ast(-,P)$. Then we have that the both definitions of twisted K-theory are equivalents. Summarizing

\begin{proposition}\label{TXL=BCV}
For every proper $G$-manifold $X$ and every projective unitary $G$-equivariant bundle over $X$. We have an isomorphism
$$K^{-p}_G(X;P)\cong \IK^{-p}_G(X;P).$$
\end{proposition}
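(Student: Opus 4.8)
Proposition \ref{TXL=BCV} asserts that two models of twisted equivariant $K$-theory — the one built from $\Fred^{(0)}(\widehat P)$ and the Tu--Xu--Laurent one built from $\Fred'(P)_{s^*}$ — agree on proper $G$-manifolds. Here is how I would organize the argument.

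The plan is to compare everything through the intermediate space $\Fred'(\HH)$ with the compact-open-induced topology. First I would recall the diagram of pointed maps already displayed in the excerpt,
$$\xymatrix{\Fred^{(0)}(\Hg)\ar[r]^{i} & \Fred^{''}(\Hg)\ar[d]^{r}\ar[r]^f & \Fred^{'}(\HH) \\ & \Fred^{(0)}(\Hg) & }$$
where $i$ is the inclusion (a deformation retract onto its image by the quoted Atiyah--Segal Proposition 3.1), $r$ is a strong deformation retract, and $f$ is a homeomorphism, all equivariant for the conjugation actions and compatible with the group homomorphism $\UU(\Hg)_{c.g.}\to\UU(\HH)_{c.g.}$. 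Passing to associated bundles over $X$ and taking $G$-equivariant sections, this gives a chain of weak homotopy equivalences identifying $\pi_p$ of the section space of $\Fred^{(0)}(\widehat P)$ with $\pi_p$ of the section space of $\Fred'(P)$ (with the compact-open topology on the bounded-operator factors), hence $K^{-p}_G(X;P)\cong \pi_p(\Gamma(X;\Fred'(P))^{X\rtimes G},s)$.

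Next I would invoke the Lemma just proved: $\Fred'(\HH)$ and $\Fred'(\HH)_{s^*}$ are $PU(\HH)$-weakly homotopy equivalent, the point being that the contraction $h_t$ of $GL(\HH)$ from \cite[Prop. A.21]{atiyahsegal} is continuous in the strong$^*$-topology, so the proof that $\Fred'(\HH)_{s^*}$ represents equivariant $K$-theory goes through verbatim. To descend from a $PU(\HH)$-weak equivalence of fibers to a weak equivalence of equivariant section spaces over $X$, I would use that for a proper $G$-manifold $X$ the functors $X\mapsto\pi_p(\Gamma(X;\Fred'(P))^{X\rtimes G},s)$ and $X\mapsto\pi_p(\Gamma(X;\Fred'(P)_{s^*})^{X\rtimes G},s)$ are both $G$-equivariant cohomology theories (Mayer--Vietoris, homotopy invariance, the usual Atiyah--Segal-type verifications on $G$-CW structures for proper actions), and that the identity map of fibers induces a natural transformation between them which is an isomorphism on the orbit-type building blocks $G/H\times S^n$ by the fiberwise weak equivalence; a comparison-of-cohomology-theories argument (five lemma along the cells of a $G$-CW model, as indicated in the text right after the Lemma) then forces it to be an isomorphism for all proper $X$. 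Combining the two chains of isomorphisms yields $K^{-p}_G(X;P)\cong\IK^{-p}_G(X;P)$.

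The main obstacle is the passage from a weak equivalence of \emph{fibers} to one of \emph{equivariant section spaces}: section spaces of bundles with infinite-dimensional structure group, taken with these delicate compactly-generated / compact-open / strong$^*$ topologies, are not obviously well-behaved, so one cannot simply quote a general fibration argument. The clean route is the cohomology-theory comparison sketched above, which localizes the question to the fibers over single orbits; verifying the Eilenberg--Steenrod-type axioms (especially excision/Mayer--Vietoris and the behavior of sections under the relevant topologies) for the Tu--Xu--Laurent model is where the real work sits, but it parallels the Atiyah--Segal treatment closely enough that I would cite \cite{atiyahsegal} and \cite{tuxustacks} at each step rather than redo it.
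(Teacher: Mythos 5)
Your proposal is correct and follows essentially the same route as the paper: the fiberwise retraction/homeomorphism diagram relating $\Fred^{(0)}(\Hg)$ to $\Fred'(\HH)$, the lemma that $\Fred'(\HH)$ and $\Fred'(\HH)_{s^*}$ are $PU(\HH)$-weakly equivalent because the Atiyah--Segal contraction $h_t$ of $GL(\HH)$ is strong$^*$-continuous, and then the comparison of the two twisted equivariant cohomology theories via the identity natural transformation. Your write-up merely spells out the $G$-CW/Mayer--Vietoris comparison step that the paper leaves as a one-line remark.
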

\begin{remark}
In order to simplify the notation from now on we denote by $\HH$  a $\IZ_2$-graded separable Hilbert space and we denote by $\Fred^{(0)}({P})$  the bundle $\FredP$.
\end{remark}

\subsection{Relation with the Kasparov external product}

In \cite{tuxustacks} twisted K-theory for Lie groupoids is defined and in Prop. 6.11 of that work this group is described as a KK-group for the case of proper groupoids. 

\begin{proposition}\cite[Prop. 6.11]{tuxustacks}
If $G \rightrightarrows M$ is a proper Lie groupoid and $M/G$ is compact, then for $i=0,1$, there is a natural isomorphism of $K_G^0(M)$-modules $\chi:KK^i_G(C_0(M),B_P)\rightarrow K^i_G(M,P)$, where $B_P$ is certain $C^*$-algebra associated to the twisting $P$.
\end{proposition}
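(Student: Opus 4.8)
The plan is to reduce the statement to the $C^*$-algebraic incarnation of twisted K-theory, using the equivalence of Proposition~\ref{TXL=BCV} already established above. Recall that $B_P$ is the proper $G$-$C^*$-algebra of sections vanishing at infinity of the bundle of compact operators $\mathcal{K}_P := \widehat{P}\times_{P\UU(\Hg)}\mathcal{K}(\Hg)$ associated to the stabilized twisting; it is a $C_0(M)$-algebra carrying a compatible $G$-action, and by \cite[Thm.~3.14]{tuxustacks} one has a natural isomorphism $\IK^i_G(M;P)\cong K_i(B_P\rtimes_r G)$. Combined with Proposition~\ref{TXL=BCV}, which identifies $\IK^i_G(M;P)$ with the Fredholm-model group $K^i_G(M;P)$ of Definition~\ref{definition K-theory of X,P}, it suffices to construct a natural isomorphism of $K^0_G(M)$-modules $KK^i_G(C_0(M),B_P)\cong K_i(B_P\rtimes_r G)$.

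First I would set up the map. A class in $KK^i_G(C_0(M),B_P)$ is represented by a $G$-equivariant Kasparov cycle $(E,F)$ with $E$ a $\mathbb{Z}/2$-graded countably generated Hilbert $B_P$-module carrying a left $C_0(M)$-action; since $B_P$ is a $C_0(M)$-algebra with fibres the compacts, such an $E$ is (fibrewise over $M$) a graded Hilbert-space bundle with structure group controlled by $P$, and $F$ becomes a $G$-equivariant section of the bundle $\Fred^{(0)}(\widehat{P})\to M$, i.e. an element of $\pi_i\bigl(\Gamma(M;\Fred^{(0)}(\widehat{P}))^{M\rtimes G},s\bigr)$. This assignment is the desired $\chi$; its $K^0_G(M)=KK^0_G(C_0(M),C_0(M))$-module structure is intertwined with that on the left-hand side because it is induced by the Kasparov product, which is compatible with fibrewise composition of operators.

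Second, to prove $\chi$ is an isomorphism I would argue by descent together with a cohomology-theory argument. On the one hand the proper Green--Julg isomorphism, valid since $G$ acts properly on $M$ with $M/G$ compact, gives $KK^i_G(C_0(M),B)\cong K_i(B\rtimes_r G)$ for every $G$-$C^*$-algebra $B$, concretely by sending $(E,F)$ to its descent twisted by a cutoff projection; specializing to $B=B_P$ this matches $\chi$ under the identifications of the previous paragraph, and multiplicativity of descent takes care of the module structures. On the other hand, for a self-contained argument one checks directly that $U\mapsto K^i_G(U;P|_U)$ and $U\mapsto KK^i_G(C_0(U),B_P|_U)$ are $G$-equivariant cohomology theories on $G$-invariant open subsets of $M$ --- half-exactness and Mayer--Vietoris on the $KK$-side being Kasparov's, and on the Fredholm side coming from the fibration sequences for spaces of equivariant sections of $\Fred^{(0)}(\widehat{P})$ --- that $\chi$ is a natural transformation intertwining the boundary maps, and then, using a finite $G$-CW decomposition of $M$ (available since $M/G$ is compact and $G$ is proper) and the five lemma, reduces the claim to the cells $G\times_H\mathbb{R}^n$ with $H$ a compact isotropy group. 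On such a cell both sides compute, by excision and induction, the twisted representation group of $H$ (equivalently $K_i$ of the twisted group $C^*$-algebra of $H$), where the identification is the classical Atiyah--Segal description of twisted equivariant K-theory of a point.

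The main obstacle is the proper Green--Julg isomorphism $KK^i_G(C_0(M),B)\cong K_i(B\rtimes_r G)$ --- equivalently, in the second route, verifying that the Fredholm-section assignment really is a cohomology theory with the correct exactness and pinning down the cell contribution. Here one must work paracompactly, since $M$ is only assumed to be a proper cocompact $G$-manifold: existence of cutoff functions, sufficient local triviality of $\widehat{P}$, and extension of equivariant Fredholm sections all have to be arranged with care. Matching the $C^*$-algebraic extension picture with the topological mapping-cone boundary maps for the Fredholm bundles, so that $\chi$ genuinely intertwines them, is the other point requiring real (though standard) attention; once these are in place, the five-lemma bookkeeping over cells is routine.
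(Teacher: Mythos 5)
There is nothing in the paper to compare your argument against: the statement is quoted verbatim from \cite[Prop.~6.11]{tuxustacks} and is used as an imported black box (the only things the present paper adds around it are Proposition~\ref{TXL=BCV} and the compatibility of the two products in Proposition~\ref{productsequiv}); no proof of the proposition itself appears here. Judged on its own terms, your outline is a reasonable reconstruction and is close in spirit to how the result is obtained in the source: reduce to the $C^*$-picture via \cite[Thm.~3.14]{tuxustacks} together with Proposition~\ref{TXL=BCV}, and then identify $KK^i_G(C_0(M),B_P)$ with $K_i(B_P\rtimes_r G)$ by the generalized Green--Julg theorem for proper groupoids with cocompact orbit space (Tu), the module structures matching because descent and the Kasparov product are multiplicative.

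Two caveats on your details. First, the assertion that a countably generated graded Hilbert $B_P$-module with $C_0(M)$-action ``is (fibrewise over $M$) a graded Hilbert-space bundle with structure group controlled by $P$'', so that $F$ becomes a section of $\Fred^{(0)}(\widehat{P})$, is exactly the nontrivial content of the comparison; it requires the equivariant stabilization/absorption theorem and the local triviality of $B_P$ (a continuous-trace-type argument), and cannot be waved through as a formal consequence of $B_P$ having compact fibres. Second, your ``self-contained'' alternative via a finite $G$-CW decomposition is both redundant (Green--Julg already gives the isomorphism for all $G$-algebras $B$, so no Mayer--Vietoris induction is needed) and the most fragile step: for a general proper Lie groupoid the existence of a finite equivariant CW structure on a cocompact $M$ is not automatic and needs slice theorems (Zung, as cited in the paper) plus an equivariant triangulation argument, which is heavier machinery than the statement you are trying to prove. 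If you drop that branch and instead carefully prove the bundle description of Kasparov cycles, your first route is essentially the standard proof.
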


Using the external Kasparov product they define a product
$$\xymatrix{K_G^i(M,P)\otimes K_G^j(M,P')\ar[rr]^(.55){\bullet_{TXL}}&&K_G^{i+j}(M,P\otimes P').}$$

Following ideas from \cite{higson1987} and using the functoriality of both products $\bullet$ and $\bullet_{TXL}$ one can prove that they are the same.

\begin{definition}
\begin{enumerate}
\item
If $\Phi$ is a $KK_G(C_0(M),B_P)$-cycle, we denote by $\Phi_*$ to the homomorphism
\begin{align*}\Phi_*:KK_G(C_0(M),B_0)\to &\  KK_G(C_0(M),B_P)\\
x\mapsto &\ x \bullet_{TXL}\Phi.\end{align*}
and by 
$\Phi^*$ to the morphism 
\begin{align*}\Phi^*:KK_G(C_0(M),B_0)\to &\  KK_G(C_0(M),B_P)\\
	x\mapsto &\ \Phi\bullet_{TXL}x .\end{align*}
\item If $s\in\Gamma^G(\Fred^{(0)}(P))$ we denote by $\overline{s}$ the homomorphism
\begin{align*}\overline{s}:K_G^i(X)\to\ &K_G^i(X,P)\\
[f]\mapsto&\ [s\bullet f].\end{align*}
\end{enumerate}
\end{definition}
\begin{proposition}\label{productsequiv}
If $\Phi\in KK^i_G(C_0(M),B_P)$ and $\Psi\in KK^i_G(C_0(M),B_{P'})$, then $\chi(\Phi\bullet_{TXL}\Psi)=\chi(\Phi)\bullet \chi(\Psi)$.
\end{proposition}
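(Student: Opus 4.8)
The plan is to reduce the claimed identity $\chi(\Phi \bullet_{TXL} \Psi) = \chi(\Phi) \bullet \chi(\Psi)$ to a formal consequence of the naturality of the two external products together with the fact that $\chi$ is an isomorphism of $K^0_G(M)$-modules, using a standard "slant back to the unit" argument in the style of Higson (\cite{higson1987}). First I would observe that both $\bullet_{TXL}$ and $\bullet$ factor through module multiplications: given a fixed class, the product with it is determined by what it does to the generator coming from the untwisted theory. Concretely, for $\Phi \in KK^i_G(C_0(M), B_P)$ let $\Phi_*$ denote, as in the definition above, the homomorphism $KK_G(C_0(M),B_0) \to KK_G(C_0(M), B_P)$, $x \mapsto x \bullet_{TXL} \Phi$; and for a section $s$ representing $\chi(\Phi)$ let $\overline{s}: K^i_G(X) \to K^i_G(X,P)$, $[f] \mapsto [s \bullet f]$. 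The core of the argument is to show that under $\chi$ (together with the comparison isomorphism of Proposition \ref{TXL=BCV} and the identification $KK_G(C_0(M),B_0) \cong K^0_G(M)$, which is the untwisted case of Prop.~6.11 of \cite{tuxustacks}) the map $\Phi_*$ corresponds to $\overline{s}$.

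The key steps, in order, would be: (i) record that both products are bilinear, natural with respect to the change-of-twisting isomorphisms, and associative/commutative enough that $\Phi \bullet_{TXL} \Psi$ can be rewritten as $\Phi_*(\,\text{unit}\,) \bullet_{TXL} \Psi$ or, after reorganizing, as applying $\Phi_* $ after $\Psi_*$-type maps to the canonical generator $1 \in KK_G(C_0(M),B_0)$; (ii) do the same bookkeeping on the Fredholm side, writing $\chi(\Phi) \bullet \chi(\Psi)$ as $\overline{s}(\chi(\Psi))$ where $s$ represents $\chi(\Phi)$ and using that $\bullet$ is the Atiyah--Segal product $(A,A') \mapsto A \widehat{\otimes} I + I \widehat{\otimes} A'$ of Section \ref{sectionmultiplicative}, which is bilinear and associative by Proposition \ref{propprodwelldef} and the graded-tensor-product formalism of \cite{atiyahsegal}; (iii) prove the single compatibility square $\chi \circ \Phi_* = \overline{s} \circ \chi$ for the generator, i.e. evaluate both sides on $1 \in K^0_G(M)$ and check $\chi(\Phi) = \chi(\Phi \bullet_{TXL} 1) = s \bullet (\text{trivial section}) = \chi(\Phi)$, which is a normalization statement, and then propagate it to all classes by naturality; (iv) combine (i)--(iii): $\chi(\Phi \bullet_{TXL} \Psi) = \chi(\Phi_*(\chi^{-1}(\chi(\Psi)))) = \overline{s}(\chi(\Psi)) = \chi(\Phi) \bullet \chi(\Psi)$.

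The main obstacle I expect is step (iii) made honest: identifying the KK-theoretic operation $x \mapsto x \bullet_{TXL} \Phi$ with the geometric operation $[f] \mapsto [s \bullet f]$ requires matching the Kasparov external product (formed via an interior/exterior product of Kasparov modules over the $C^*$-algebra $B_P$) with the explicit Fredholm-bundle operation $A \widehat{\otimes} I + I \widehat{\otimes} A'$ on sections, and this matching has to be performed at the level of representing cocycles/sections, not just up to homotopy, if one wants the diagram to commute on the nose. This is exactly the kind of verification done in \cite{higson1987} for the untwisted Bott/periodicity products; the twisted equivariant case forces one to keep track of the $PU(H)$-bundle transition functions $\varphi_{\alpha\beta} \widehat{\otimes} \phi_{\alpha\beta}$ and the compatibility $(\varphi \cdot A)\sharp(\phi \cdot B) = (\varphi \widehat{\otimes}\phi)\cdot(A \sharp B)$ established in Proposition \ref{propprodwelldef}, together with the comparison of topologies on Fredholm operators (Proposition \ref{TXL=BCV}) so that the $s^*$-continuous KK-picture and the compact-open Fredholm picture can be used interchangeably. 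Once those identifications are in place the rest is formal, so I would spend the bulk of the written proof on (iii) and treat (i), (ii), (iv) as short naturality remarks.
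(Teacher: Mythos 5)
Your proposal follows essentially the same route as the paper's proof: both reduce the identity to the Higson-style trick of writing $\Phi\bullet_{TXL}\Psi$ via the operators $\Phi_*$, $\Psi^*$ applied to the unit $1_{C_0(M)}$, and then invoke the naturality of $\chi$ (its compatibility with the $K_G^0(M)$-module structures, as supplied by Prop.~6.11 of \cite{tuxustacks}) to convert $\Phi_*$ and $\Psi^*$ into $\overline{\chi(\Phi)}$ and $\overline{\chi(\Psi)}$ on the Fredholm side. The only difference is one of emphasis: the paper simply cites this naturality, while you propose to verify it at the level of representing sections, which is consistent with, not divergent from, the paper's argument.
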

\begin{proof}For this proof we denote by $1_{C_0(M)}$ the multiplicative identity of $K_G(M)$
 \begin{align*}
 \chi(\Phi\bullet_{TXL}\Psi)&= \chi(\Phi_*(1_{C_0(M)})\bullet_{TXL}\Psi^*(1_{C_0(M)}))\\
 &=\chi(\Phi_*(\Psi^*(1_{C_0(M)})))\text{ ($1_{C_0(M)}$ is the multiplicative identity)}\\
 &=\overline{\chi(\Phi)}\left(\overline{(\chi(\Psi))}(\chi(1_{C_0(M)}))\right)\text{(the naturality of $\chi$)}\\
 &=\overline{\chi(\Phi)}(1_{C_0(M)})\bullet \overline{\chi(\Psi)}(1_{C_0(M)})\\
 &=\chi(\Phi)\bullet \chi(\Psi).
  \end{align*}
\end{proof}
The above result implies that both products are the same modulo the equivalence $\chi$. In particular we have:

\begin{corollary}\label{prodFredassoc}
The product $\bullet$ defined in (\ref{defbullet}) above is associative.
\end{corollary}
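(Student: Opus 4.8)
The plan is to deduce the statement from the fact that the Kasparov-type product $\bullet_{TXL}$ of Tu--Xu--Laurent is associative (this is established in \cite{tuxustacks}, theorem 6.1), transporting this associativity along the natural isomorphism $\chi$ of \cite[Prop. 6.11]{tuxustacks} by means of Proposition \ref{productsequiv}. Concretely, I would apply the package consisting of \cite[Prop. 6.11]{tuxustacks} and Proposition \ref{productsequiv} not to $G$ itself but to the action groupoid $X\rtimes G$, whose unit space is $X$; this is the relevant generality, since the cycles appearing in the main theorem are built on $G$-proper co-compact manifolds $X$, for which $X\rtimes G$ is a proper groupoid with compact orbit space and the identification $K^{*}_G(X;P)=K^{*}_{X\rtimes G}(X;P)$ of Definition \ref{definition K-theory of X,P} is in force.

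Granting this, the argument is short. Since $\chi$ is an isomorphism, any three classes may be written $x=\chi(\Phi_1)$, $y=\chi(\Phi_2)$, $z=\chi(\Phi_3)$ for $KK$-cycles $\Phi_i$ over $B_{P_i}$. Repeated use of Proposition \ref{productsequiv} (applied once to $(\Phi_2,\Phi_3)$, then to $(\Phi_1,\Phi_2\bullet_{TXL}\Phi_3)$, and symmetrically for the other bracketing) yields
$$(x\bullet y)\bullet z=\chi\big((\Phi_1\bullet_{TXL}\Phi_2)\bullet_{TXL}\Phi_3\big),\qquad x\bullet(y\bullet z)=\chi\big(\Phi_1\bullet_{TXL}(\Phi_2\bullet_{TXL}\Phi_3)\big).$$
Associativity of $\bullet_{TXL}$ identifies the two arguments of $\chi$ on the right, modulo the canonical isomorphism $(B_{P_1}\otimes B_{P_2})\otimes B_{P_3}\cong B_{P_1}\otimes(B_{P_2}\otimes B_{P_3})$; applying $\chi$ and invoking its naturality with respect to that isomorphism gives $(x\bullet y)\bullet z=x\bullet(y\bullet z)$, it being understood (as for every product of twisted theories) that the two iterated products are compared through the canonical identification of $K^{-(p+q+r)}_G\big(X;(P\otimes P')\otimes P''\big)$ with $K^{-(p+q+r)}_G\big(X;P\otimes(P'\otimes P'')\big)$.

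The step I expect to cost the most care is precisely this last piece of bookkeeping: one must check that $\chi$ intertwines the associator of the $C^*$-algebras $B_{P_i}$ with the associator implicit in the definition of $\bullet$ on the Fredholm side, namely the reassociation unitary $u\colon(\Hg\widehat{\otimes}\Hg)\widehat{\otimes}\Hg\to\Hg\widehat{\otimes}(\Hg\widehat{\otimes}\Hg)$ attached to the operation $(A,B)\mapsto A\widehat{\otimes}I+I\widehat{\otimes}B$. Should that verification through $\chi$ prove awkward, there is a direct route that bypasses $KK$-theory and works for an arbitrary proper $G$-manifold $X$: conjugation by $u$ carries $(A\sharp B)\sharp C$ to $A\sharp(B\sharp C)$ on $\Hg^{\widehat{\otimes}3}$ --- this is nothing but associativity of the sum $A\widehat{\otimes}I\widehat{\otimes}I+I\widehat{\otimes}B\widehat{\otimes}I+I\widehat{\otimes}I\widehat{\otimes}C$ of bounded operators --- and $u$ is equivariant for the relevant $P\UU(\Hg)$-actions, so the two maps $\Fred^{(0)}(\Hg)^{\times 3}\to\Fred^{(0)}(\Hg^{\widehat{\otimes}3})$ agree after the canonical identification of target bundles; passing to $G$-equivariant sections over $X$ and then to homotopy groups yields the associativity of $\bullet$ on the nose, up to the associator of twistings. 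Either way the corollary follows.
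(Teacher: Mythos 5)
Your main argument is exactly the paper's proof: the paper deduces the corollary directly from Proposition \ref{productsequiv}, transporting associativity of the Kasparov-type product $\bullet_{TXL}$ of \cite{tuxustacks} through the isomorphism $\chi$, just as you do (your remarks on applying this to $X\rtimes G$ and on the associator of twistings are sound extra bookkeeping the paper leaves implicit). Your alternative direct Fredholm-side argument via the reassociation unitary is a valid bonus, but the route you lead with is essentially the paper's.
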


\section{Thom isomorphism}

Let $G\rightrightarrows G_0$ be a Lie groupoid and $P$ a twisting. Consider a $G$-oriented vector bundle $E\longrightarrow X$. In particular since we will assume that $G$ acts properly on $P$ and on $E$, we can assume $E$ admits a $G$-invariant metric, see for instance \cite{PPT} proposition 3.14 and \cite{HF} theorem 4.3.4.. As explained in \cite{CaWangBC} appendix A (especially proposition A.3), in this situation there is a natural isomorphism
$$Th:{\bf K_G^*(X,P)}\rightarrow {\bf K_G^{*-rank(E)}(E,\pi^*(P\otimes \beta_E))}$$ 
where ${\bf K_G^*(X,P)}$ stands for the K-theory of the twisted groupoid $C^*$-algebra 
$C^*_r(X\rtimes G,P)$ and where $\beta_E$ is the orientation $G$-twisting over $E$ defined in example (\ref{obundle}) in \ref{example} above.
The fact that it is indeed the Thom isomorphism comes from the functoriality and the naturality with respect to the Kasparov products of the Le Gall's descent construction \cite{LeGall} theorem 7.2. This is explained in details in the appendix cited above or in \cite{Elkaioum} in the context of real groupoids (the same arguments apply in the complex case). 

Now, in \cite{tuxustacks} theorem 3.14 the authors prove that for proper Lie groupoids the groups
${\bf K_G^*(X,P)}$ and $\IK^{-p}_G(X;P)$ are naturally isomorphic. We thus obtain, by proposition \ref{TXL=BCV}, the Thom isomorphism

$$Th:K_G^*(X,P)\rightarrow K_G^{*-rank(E)}(E,\pi^*(P\otimes \beta_E)).$$ 

It is possible however to construct the Thom isomorphism directly in the Fredholm picture of the twisted K-theory (whenever the respective action groupoids are proper), we will perform this construction for  the benefit of the reader.

\subsection*{The   spin  representation  and  twisted  K-Theory}

Let  $n$  be  an  even  natural number. 

Let  $\IR^n$  denote  the  euclidean, $n$-dimensional  vector  space  denoted  with  the  euclidean  scalar product. 

The Clifford  algebra $\Cliff(\IR^n)$ 
is  defined  as  the  complexification  of the   quotient  of  the  tensor algebra   $ T\IR^ n= \underset{j=0}{\overset{\infty} {\bigotimes}} \IR^ n$ by  the  two-sided  ideal  defined  by  elents  of  the  form $ x\otimes x-\langle x,x\rangle$, where  $\langle\quad \rangle $ denotes  the   euclidean scalar  product. 
  
It is  generated  as  $\IC$-algebra   by  the  elements of a an  orthogonal  basis  $e_i$ of  $ \IR^ n$     with   the  relations $e_i \cdot e_j = -2 \delta_{i,j}$. 

The  algebra $\Cliff(\IR^n)$ is isomorphic  as a  vector  space   to   the exterior algebra
$\Lambda^*(\IR^n)=\underset{j=0}{\overset{n} {\bigoplus}}\Lambda^j \IR^n$  \cite{lawsonmichelson},  Proposition  1.3  in page  10,  in  particular, it   has   complex  dimension  $2^n $ .

The  map  given by Clifford  multiplication  with the  element  $ e_1\cdot, \ldots, \cdot  e_n$ defines  a linear  operator  on  $ \Cliff(\IR^n)$. The   Clifford  algebra  then  decomposes  as a  vector  space $\Cliff(\IR^n)= S^+ \oplus S^-$,  where $S^+$  is  the eigenspace  associated to $+1$  and $S^-$  is  the one  associated  with  $-1$. An  element  in  $S^+$  is  called  even,  an  element  in  $S^-$ is  said  to  be  odd. 

The  group  $ \Spin(\IR^n) $  consists  of  the  multiplicative  group  of even units in the  Clifford algebra, in  symbols $ \Spin(\IR^n)= \Cliff(\IR^n)^{*} \cap S^+.$

The group  $\Spin(\IR^n)$
is the universal   covering  of  the  special  orthogonal group $\SO(n)$.
The  map 
$$1 \to  \IZ_ 2 \to  \Spin(\IR^n)\to SO(n)\to 1$$   

is  a  model  for  the  universal  central  extension of  $ SO(n)$.

This  extension  is classified  by   the  nontrivial  class $\tau\in H^2(SO(n), S^1)\cong \IZ_2$.

The  group $\Spin(\IR^n)$  has  a complex  linear  representation $\rho: \Spin(\IR^n)\to U(2^n)$,  given  by  the  identification  of  $ \operatorname{\IC liff}(\IR^n)=\Cliff(\IR^n)\otimes\IC$  with the  complex  vector  space  of  dimension $2^n$ as  an algebra,  and the linear  operator  given  by   $ \rho(x):v\mapsto x^{-1} vx $. 

The  representation  $\rho$ gives  rise  to a   continuous  group  homomorphism  $ \beta$  as in the  following  diagram:  
$$ \xymatrix {1 \ar[r ]&\ar[r] S^1 \ar[d]\ar[r] & \Spin^c(\IR^n) \ar[r] \ar[d]^{\rho} & SO(n) \ar[r]\ar[d]^{\beta} &  1\\  1 \ar[r] & S^1 \ar[r] &U(2^n) \ar[r] & PU(2^n)\ar[r] & 1 }$$

\begin{definition}
The spin  representation  is  the  homomorphism $\beta : \SO(n )\to P \UU( \HH)$  
\end{definition}
   
\begin{remark}
Let $n$ be a even positive integer. Consider a  proper oriented $G$-vector bundle $E\xrightarrow{\pi}X$ over a  proper $G$-manifold $X$. We can suppose that the chart data is given by a generalized morphism $$\xymatrix{X\rtimes G\ar@{-->}[r]^{O_E}& \SO(n)}.$$
Composing the generalized morphism $O_E$ with the spin representation $\beta$ we obtain a twisting $\xymatrix{\beta_E:X\rtimes G\ar@{-->}[r]&P\UU(\HH)}$, called the \textit{orientation twisting}.

\end{remark}
 We will construct now the Thom class in the Fredholm picture. If $X$ is a proper $G$-manifold, by Theorem 2.3 in \cite{zung} for every $x\in X$ there is a open neighbourhood $U$ of $x$ contractible to the orbit of $x$ in $X\rtimes G$ with action of the isotropy group $G_x$ such that there is a Lie groupoid isomorphism 
 $$(X\rtimes G)\mid U\cong U\rtimes G_x.$$
 
 We have an isomorphism \begin{equation}\label{thomtrivial}K^{-n}_{G_x}(U,\beta_E\mid_U)\cong R_{S^1}(\widetilde{G_x}),\end{equation}
 where $\widetilde{G_x}$ is the  $S^1$-central extension of $G_x$ associated to the twisting $\beta_E\mid_U$. On the other hand, $E\mid_{\{x\}}$ is a real representation of $G_x$, since it can be  viewed as a homomorphism $\eta_x:G_x\rightarrow SO(n)$. The composition $\beta\circ\eta_x:G\rightarrow P\UU(\HH)$ is a projective representation and its isomorphism class determines an  element of $R_{S^1}(\widetilde{G_x})$. Using the identification \ref{thomtrivial}, it can be viewed as an element of $K^{-n}_{G_x}(U,\beta_E\mid_U)$. We denote this element by $\lambda_{-1}^U$.

Taking a covering of $X\rtimes G$ one can see that these local elements are the same on intersections. The  local  trivializations  define a global element 
$$[\lambda_{-1}^E]\in K_G^{-n}(X,\beta_E),$$
we call it the \textit{Thom class}.

Given $s\in\Gamma^G(P\times_{P\UU(\HH)}\Fred^{(0)}(\HH))$, where $P\rightarrow X$ is a twisting, we define the Thom isomorphism

\begin{align*}Th:K_G^*(X,P)\to&    K_G^{*-n}(E,\pi^*(P\otimes \beta_E))\\
            [s]          \mapsto& [e\mapsto s(\pi(e))\bullet \lambda_{-1}^E(\pi(e))].\end{align*}

When the vector bundle $E$ is odd dimensional, using the classic suspension isomorphism and the previous Thom isomorphism for $E\oplus \IR$, one gets as well a Thom isomorphism as above.

Since the Thom isomorphism is natural with respect to the Kasparov product we can summarize the discussion above in the following statement.

  \begin{theorem}\label{Thomring}[Thom isomorphism]
  With notations as above, there is a natural isomorphism $$Th:K_G^*(X,P)\rightarrow K_G^{*-rank(E)}(E,\pi^*(P\otimes \beta_E))$$ 
 which gives the Thom isomorphism. If $E$ is a $Spin^c$ $G-$ vector bundle the above isomorphism is compatible with the external $\bullet$ product.
  \end{theorem}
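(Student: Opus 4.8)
The plan is to establish the Thom isomorphism in the Fredholm picture directly and then check compatibility with the $\bullet$-product. First I would treat the construction of the Thom class $[\lambda_{-1}^E]\in K_G^{-n}(X,\beta_E)$ as already achieved above: locally over a slice chart $U\rtimes G_x$ one has the identification $K^{-n}_{G_x}(U,\beta_E|_U)\cong R_{S^1}(\widetilde{G_x})$ and the class is the $\lambda_{-1}$-element of the spin representation $\beta\circ\eta_x$; the gluing uses that on overlaps the two projective representations agree up to canonical isomorphism, so a partition-of-unity/Mayer--Vietoris argument (or the fact that sections of a bundle whose fibers are Fredholm spaces glue) gives the global class. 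With $[\lambda_{-1}^E]$ in hand, the candidate map is
\[
Th\colon K_G^*(X,P)\to K_G^{*-n}(E,\pi^*(P\otimes\beta_E)),\qquad [s]\mapsto [e\mapsto s(\pi(e))\bullet\lambda_{-1}^E(\pi(e))],
\]
i.e. pull back along $\pi\colon E\to X$ and multiply by the Thom class using the external product $\bullet$ from Section~\ref{sectionmultiplicative}.

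Next I would prove this map is an isomorphism. The cleanest route is to avoid redoing analysis: by Proposition~\ref{TXL=BCV} and the results recalled at the start of this section, the Fredholm-picture groups $K_G^*(X,P)$ agree naturally with the $C^*$-algebraic groups ${\bf K}_G^*(X,P)$, for which the Thom isomorphism $Th$ is already known (via Le Gall's equivariant KK descent, \cite{LeGall}, and \cite{CaWangBC} Appendix~A). So it suffices to check that the Fredholm-picture map just defined corresponds, under these natural isomorphisms, to the $C^*$-algebraic Thom map. Both are $K_G^0(X)$-module maps and both send the unit $1\in K_G^0(X)$ (resp. the class of the trivial twisting) to the same element, namely the Thom class / the Le Gall descent of the Kasparov Thom element; since a $K_G^0(X)$-module map out of $K_G^0(X)$ is determined by the image of $1$, the two maps coincide. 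This is exactly the style of argument used in Proposition~\ref{productsequiv} above, and it transfers the isomorphism property for free. For odd $\rk(E)$ one reduces to the even case by the classical suspension isomorphism applied to $E\oplus\IR$, as indicated.

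Finally, for the multiplicativity statement: assume $E$ is $Spin^c$, so that $\beta_E$ is (canonically isomorphic to) the trivial twisting and the Thom class $[\lambda_{-1}^E]$ lifts to an \emph{untwisted} class $u_E\in K_G^{-n}(E)$ (the $Spin^c$ Thom class), with $Th([s])=\pi^*[s]\bullet u_E$ under this identification. Compatibility with $\bullet$ then amounts to the identity, for $[s]\in K_G^*(X,P)$ and $[s']\in K_G^*(X,P')$,
\[
Th([s])\bullet Th([s'])=Th([s]\bullet[s'])
\]
in $K_G^{*-n}(E,\pi^*(P\otimes P'))$ — note that on the left one uses $u_E\bullet u_E$ for the diagonal $E\hookrightarrow E\times_X E$ versus $u_E$ on the right, and the point is precisely that $u_E\bullet u_E$ restricted to the diagonal equals $u_E$ (multiplicativity of the $Spin^c$ Thom class, a consequence of $\widehat\otimes$ being associative/commutative on the spin representation level, cf. \cite{atiyahsegal} p.~20). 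This reduces to a fiberwise statement about graded tensor products of spin modules: $S_E\widehat\otimes S_E\cong S_{E\oplus E}$ restricts along the diagonal to $S_E\widehat\otimes\Lambda$ which is $S_E$ after the stabilization built into $\Fred^{(0)}$. So I would: (i) unwind $\bullet$ on both sides in the local Fredholm model using the operation $A\sharp B=A\widehat\otimes I\oplus I\widehat\otimes B$; (ii) invoke associativity of $\bullet$ (Corollary~\ref{prodFredassoc}) to move parentheses freely; (iii) check the fiberwise spin-module identity and observe it is compatible with the $PU(H)$-cocycles, exactly as in the proof of Proposition~\ref{propprodwelldef}.

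I expect the main obstacle to be step (iii): pinning down the fiberwise compatibility of the graded tensor product of spin representations with the diagonal restriction \emph{together with} the stabilization isomorphism $\HH\otimes\Cliff(\IR^n)\cong\HH$ built into the definition of $\beta$ and of $\Fred^{(0)}(\widehat P)$. One must be careful that the chosen isomorphisms are coherent (associative up to the homotopies that make $\Fred^{(0)}$ a representing space), so that $u_E\bullet u_E|_\Delta = u_E$ holds on the nose in $K$-theory and not merely up to an uncontrolled automorphism; this is the place where the graded-Fredholm-bundle setup of Section~3 — as opposed to the ungraded one of \cite{barcenasespinozajoachimuribe} — is genuinely needed, and where a partition-of-unity argument over a good cover of $X\rtimes G$ has to be run carefully to glue the local identities.
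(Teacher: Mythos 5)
Your construction of the Thom class and of the map $[s]\mapsto[e\mapsto s(\pi(e))\bullet\lambda_{-1}^E(\pi(e))]$, and your strategy of getting the isomorphism property by transporting the $C^*$-algebraic/KK Thom isomorphism through Proposition \ref{TXL=BCV} and Tu--Xu's theorem 3.14, is exactly the route the paper takes. One caveat there: the argument ``a $K_G^0(X)$-module map out of $K_G^0(X)$ is determined by the image of $1$'' only applies literally when the twist $P$ is trivial; for nontrivial $P$ the source $K_G^*(X,P)$ has no unit and need not be generated by a single element over $K_G^0(X)$. This is repairable without any generation argument: both maps are given by multiplication with the respective Thom elements, so Proposition \ref{productsequiv} (which identifies $\bullet$ with $\bullet_{TXL}$ under $\chi$) already shows they agree on every class.

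The genuine gap is in your multiplicativity step. The identity you rely on --- that $u_E\bullet u_E$, i.e.\ the Thom class of $E\oplus E$, restricts along the diagonal to $u_E$ --- is false. The diagonal $E\to E\times_X E$ is the tautological section of $\pi^*E\to E$, which is homotopic to the zero section, so the restriction equals $u_E$ multiplied by the pullback of the K-theoretic Euler class of $E$ (the restriction of the Thom class to the zero section), not $u_E$ itself. Consequently the asserted identity $Th(x)\bullet Th(y)=Th(x\bullet y)$ cannot hold: already for the trivial group, trivial twists and $E=\underline{\IC}$ over a point, $Th(1)$ is the Bott class $\beta$ with $\beta\bullet\beta=([H]-1)^2=0$ in $\widetilde{K}(S^2)$, while $Th(1\bullet 1)=\beta\neq 0$ (the degree bookkeeping, a shift by $2n$ on one side versus $n$ on the other, is already a warning sign, and the $Spin^c$ hypothesis does not rescue this example). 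The compatibility that does hold, and the one that drops out of the paper's route --- identify $\bullet$ with the Kasparov external product via Proposition \ref{productsequiv} and use associativity/naturality of that product (Corollary \ref{prodFredassoc}) --- is of module type: $Th(x\bullet y)=Th(x)\bullet\pi^*y$, where the second factor is pulled back to $E$ rather than Thom-shifted; the role of the $Spin^c$ assumption is then to trivialize $\beta_E$ so the twists on the two sides of the relevant diagrams match. Your step (iii), the fiberwise spin-module/gluing computation, should therefore be dropped: it is an attempt to prove a statement that fails by an Euler-class factor, whereas the tenable form of the compatibility needs no fiberwise computation at all.
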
 
  
\section{Pushforward and Pullback Maps }

\subsection{Pushforward}\label{pushforwardsection}




In this section we will recall how to define the pushforward morphism associated to any smooth $G$-map $f:X\to Y$ between to $G$- manifolds, definition 4.1 in \cite{CaWangBC}. For the purpose of this paper we will perform the construction in the case of K-oriented maps. By this we mean that the bundle $T^*X\oplus f^*(TY)$ admits a $Spin^c-$structure.

The difference in the present construction with respect to ref.cit. is that we will not make reference to $C^*$-algebras and we will perform the construction using the Fredholm picture of the twisted K-theory, in particular the construction below works only for $G$-proper manifolds.

We will need to state some general statements about groupoids that will simplify the particular constructions we are interested in. 

\begin{lemma}\label{excisiongrpdK}
Let $G\rightrightarrows G_0$ be a proper Lie groupoid together with a twisting P. Let $H\rightrightarrows H_0$ be a proper Lie saturated closed subgroupoid.
\begin{enumerate}
\item There is a canonical restriction morphism
\begin{equation}
K_{G}^{-p}(G_0,P)\to K_{H}^{-p}(H_0,P|_{H_0})
\end{equation}
\item Suppose $G$ decomposes as the union of two saturated proper subgroupoids $G=H\sqcup H'\rightrightarrows H_0\sqcup H'_0$ with $H$ closed subgroupoid. There is a long exact sequence
\begin{equation}
\tiny{
\xymatrix{
\ar[r]&K_{H'}^{-p}(H'_0,P|_{H'_0})\ar[r]&K_{G}^{-p}(G_0,P)\ar[r]&K_{H}^{-p}(H_0,P|_{H_0})\ar[r]&K_{H'}^{-p-1}(H'_0,P|_{H'_0})\ar[r]&
}}
\end{equation}
\end{enumerate}
\end{lemma}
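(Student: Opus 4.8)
The plan is to prove both statements by reducing everything to the corresponding facts about the equivariant twisted $K$-theory functor $K_G^{-p}(-,-)$ viewed, via the Fredholm picture of Definition \ref{definition K-theory of X,P}, as homotopy groups of a space of equivariant sections of a bundle of Fredholm operators. For (i) the restriction morphism, I would first observe that since $H\rightrightarrows H_0$ is a saturated closed subgroupoid of the proper groupoid $G\rightrightarrows G_0$, the unit space $H_0$ is a closed $G$-invariant subspace of $G_0$ and the twisting $P|_{H_0}$ is simply the pull-back of $P$ along $H_0\hookrightarrow G_0$; consequently the stable projective bundle $\Fred^{(0)}(\widehat{P})\to G_0$ restricts to $\Fred^{(0)}(\widehat{P|_{H_0}})\to H_0$. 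Restriction of sections then gives a pointed map $\Gamma(G_0;\Fred^{(0)}(\widehat{P}))^{G_0\rtimes G}\to \Gamma(H_0;\Fred^{(0)}(\widehat{P|_{H_0}}))^{H_0\rtimes H}$ (here one uses that an $H_0\rtimes H$-equivariant section on $H_0$ is exactly what the restriction of a $G_0\rtimes G$-equivariant section produces, since $H$ is a full subgroupoid), preserving the base-point section $s=\widehat{I}$; taking $\pi_p$ yields the desired morphism. One must check well-definedness up to the canonical isomorphisms coming from bundle isomorphisms (Proposition \ref{propprodwelldef}-type naturality), but this is routine.

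For (ii) the Mayer--Vietoris / long exact sequence, the approach is the standard one for a cohomology theory defined on a ``pair'': since $G=H\sqcup H'$ with $H$ closed and $H'$ open (complementarily open, as $G\setminus H = H'$ is open saturated), write $U:=H'_0$ for the open saturated complement of the closed saturated set $H_0$ in $G_0$. Then I would exhibit the twisted equivariant $K$-theory of the closed subgroupoid as the cofibre of the inclusion: concretely, the space of equivariant sections over $G_0$ that are the base-point section $s$ on a neighbourhood of $H_0$ models a ``relative'' group $K^{-p}_{G,H'}(G_0,P)$, which one identifies with $K^{-p}_{H'}(H'_0,P|_{H'_0})$ by the open-excision property of these section spaces (sections supported away from $H_0$ extend by $s$, and the $H'$-action on the open saturated part is free of any $H_0$-contributions). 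The long exact sequence is then the homotopy long exact sequence of the fibration/cofibration
\[
\Gamma_{s\text{ near }H_0}(G_0;\Fred^{(0)}(\widehat{P}))^{G_0\rtimes G}\longrightarrow \Gamma(G_0;\Fred^{(0)}(\widehat{P}))^{G_0\rtimes G}\longrightarrow \Gamma(H_0;\Fred^{(0)}(\widehat{P|_{H_0}}))^{H_0\rtimes H},
\]
the third space being the target of the restriction map of (i), combined with the identification of the fibre with the $H'$-group; the connecting map $K_H^{-p}(H_0,-)\to K_{H'}^{-p-1}(H'_0,-)$ is the resulting boundary.

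The step I expect to be the main obstacle is establishing that the restriction map in (i) is actually a \emph{fibration} (or at least a quasifibration) of section spaces with the ``fibre'' correctly identified as the $H'$-equivariant twisted $K$-theory of the open complement --- i.e. the excision statement. The subtleties are: (a) properness of $G$ and local structure of proper groupoid actions (Zung's linearisation theorem, as invoked just above in the excerpt for the Thom class) must be used to guarantee that equivariant sections can be extended from closed saturated subspaces and that partitions of unity adapted to the $G$-action exist, so that the restriction map is surjective on path components and lifts homotopies; (b) one must be careful that the compactly-generated / compact-open topologies on $\Fred^{(0)}(\Hg)$ and on the unitary groups (as discussed in Section 3) are well-behaved enough that these section spaces are genuinely well-pointed and the relevant maps are cofibrations — but this is exactly the kind of point handled in \cite{atiyahsegal} and \cite{barcenasespinozajoachimuribe} and I would cite those. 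Once excision is in place, everything else is formal homotopy theory of pointed mapping spaces, and the naturality of the whole construction with respect to twisting isomorphisms (hence independence of the choice of $PU(H)$-bundle representing the twisting) follows from Proposition \ref{propprodwelldef}.
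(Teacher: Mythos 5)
The decisive gap is in your excision step for (ii). You identify the fibre of the restriction map (equivariant sections that equal the base-point section $s$ on, or near, $H_0$) with $K^{-p}_{H'}(H'_0,P|_{H'_0})$, where you take the latter in the sense of Definition \ref{definition K-theory of X,P}, i.e.\ homotopy groups of the \emph{full} equivariant section space over $H'_0$. This identification is false unless $H'_0$ is $H'$-cocompact, and in every situation where the lemma is actually used in the paper (deformation groupoids, whose open piece is of the form $G_2\times\mathbb{R}^*$ or $G_0\times(0,1]$) it is not. Already for the trivial groupoid with $G_0=[0,1]$, $H_0=\{0\}$, $H'_0=(0,1]$ and trivial twisting, the fibre of restriction is the space of paths in $\Fred^{(0)}(\Hg)$ starting at $\widehat{I}$, which is contractible, so its homotopy groups vanish --- as they must, since the sequence has to be consistent with Lemma \ref{homotopygrpdK} --- whereas $\pi_p$ of the full section space over $(0,1]$ is $K^{-p}(\mathrm{pt})\neq 0$. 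So the group attached to the open saturated piece in the exact sequence must be a compactly-supported (equivalently, $C^*$-algebraic) group, not the naive section-space group your argument produces; with your reading of the symbols the statement you would prove is simply wrong. In addition, the quasifibration/equivariant-extension property of the restriction map, which you correctly single out as the hard point, is only flagged and never established.

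This is also not how the paper proceeds: it makes no homotopy-theoretic argument with section spaces at all. Since $H'$ is an open saturated subgroupoid and $H$ a closed saturated one, there is the classical short exact sequence of twisted groupoid $C^*$-algebras $0\to C^*_r(H',P)\to C^*_r(G,P)\to C^*_r(H,P|_{H_0})\to 0$; the restriction morphism of (i) is the map induced in $K$-theory by the quotient morphism, and the long exact sequence of (ii) is the $K$-theory exact sequence of this extension, transported to the Fredholm picture (where it applies) via Theorem 3.14 of \cite{tuxustacks} together with Proposition \ref{TXL=BCV}. Your part (i) is fine and essentially equivalent to this. To rescue your route for (ii) you would first need a Fredholm model with supports on non-cocompact proper $G$-spaces (sections equal to the base point outside a $G$-compact set, or vanishing at infinity), prove it agrees with the $C^*$-algebraic groups, and only then run the fibration-plus-excision argument; as written, the excision identification is the missing and incorrect step.
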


\begin{lemma}\label{homotopygrpdK}
Let $G\rightrightarrows G_0$ be a proper Lie groupoid together with a twisting P, consider the product groupoid $G\times (0,1]\rightrightarrows G_0\times (0,1]$ with the pullback twisting $P_{(0,1]}$. For every $p\in \mathbb{Z}$
$$K_{G\times (0,1]}^{-p}(G_0\times (0,1],P_{(0,1]})=0.$$
\end{lemma}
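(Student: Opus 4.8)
The statement asserts the vanishing of the twisted equivariant $K$-theory of the groupoid $G\times(0,1]\rightrightarrows G_0\times(0,1]$ with the pullback twisting. My plan is to prove this by a homotopy (contractibility) argument carried out directly in the Fredholm picture of Definition \ref{definition K-theory of X,P}. The key observation is that $(0,1]$ is contractible, so the space of equivariant sections $\Gamma(G_0\times(0,1];\Fred^{(0)}(\widehat{P_{(0,1]}}))^{(G_0\times(0,1])\rtimes(G\times(0,1])}$ should be (weakly) contractible, forcing all its homotopy groups to vanish. Concretely, I would first identify this section space with the space of $G$-equivariant \emph{paths} in $\Gamma(G_0;\Fred^{(0)}(\widehat{P}))^{G_0\rtimes G}$ parametrized by $(0,1]$: a section over $G_0\times(0,1]$ is the same data as a map $(0,1]\to\Gamma(G_0;\Fred^{(0)}(\widehat{P}))^{G_0\rtimes G}$, because the twisting $P_{(0,1]}$ is pulled back from $P$ and hence the bundle $\Fred^{(0)}(\widehat{P_{(0,1]}})$ is pulled back from $\Fred^{(0)}(\widehat{P})$ along the projection $\pr\colon G_0\times(0,1]\to G_0$, and equivariance for $G\times(0,1]$ reduces to $G$-equivariance fiberwise in the $(0,1]$-direction.

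**The contraction.** Once this identification is in place, I would use the straight-line contraction of $(0,1]$ onto its endpoint $1$. Define $H\colon[0,1]\times(0,1]\to(0,1]$ by $H(u,t)= (1-u)t+u$, so $H(0,t)=t$ and $H(1,t)=1$; this is a deformation retraction of $(0,1]$ onto $\{1\}$ through maps fixing $1$. Pulling back sections along $H(u,-)$ gives, for each $u$, an endomorphism of the section space, interpolating between the identity (at $u=0$) and the constant reparametrization at the base point $s$ (at $u=1$), the latter composed with evaluation at $1$. This exhibits the identity of $\Gamma(G_0\times(0,1];\Fred^{(0)}(\widehat{P_{(0,1]}}))^{(G_0\times(0,1])\rtimes(G\times(0,1])}$ as homotopic, through base-point-preserving maps, to a map that factors through the constant section; hence every homotopy group based at $s$ is trivial, i.e. $K^{-p}_{G\times(0,1]}(G_0\times(0,1],P_{(0,1]})=\pi_p(\cdots,s)=0$ for all $p\in\IZ$. (For negative $p$ one uses, as usual, that these groups are defined $2$-periodically or via the representability of $\Fred^{(0)}$; the argument is unchanged.)

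**The main obstacle.** The delicate point is not the homotopy itself but checking that the reparametrization maps $s\mapsto s\circ(\id_{G_0}\times H(u,-))$ are genuinely continuous as $u$ varies \emph{in the relevant topology on the section space}, and that they respect $G$-equivariance at every stage. Continuity of composition in the compact-open / compactly generated topologies used here for $\Fred^{(0)}(\widehat{\calh})$ is exactly the kind of subtlety flagged earlier in the paper (the unitary group fails to be a topological group in these topologies, composition being continuous only on compact sets). I would address this by noting that the reparametrization acts on the \emph{base} $(0,1]$ and not on the Fredholm fibers, so no composition of operators is involved — only precomposition of a continuous section with a continuous self-map of the base — which is manifestly continuous; equivariance is preserved because $H(u,-)$ commutes with the (trivial) $G$-action on the $(0,1]$-factor. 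A secondary routine check is that the pullback twisting $P_{(0,1]}$ and its associated Fredholm bundle are literally the pullback of $P$ and $\Fred^{(0)}(\widehat P)$ along $\pr$, which is immediate from the definition of pull-back twisting in Example \ref{example}\eqref{obundle} and the functoriality of the associated-bundle construction. With these two points settled, the vanishing follows.
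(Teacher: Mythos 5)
Your argument has a genuine error, and it sits exactly at the step you treat as unproblematic. Your contraction $H(u,t)=(1-u)t+u$ retracts $(0,1]$ onto the \emph{closed} endpoint $t=1$, so at $u=1$ the reparametrization map sends a section $\sigma$ to $(x,t)\mapsto\sigma(x,1)$; this map factors through the space of sections over $G_0\times\{1\}$, i.e.\ through $\Gamma(G_0;\Fred^{(0)}(\widehat{P}))^{G_0\rtimes G}$, \emph{not} through the basepoint section $s$ (it equals $s$ only on those $\sigma$ whose restriction at $t=1$ is already the basepoint). What your homotopy actually proves is that the equivariant section space over $G_0\times(0,1]$ deformation retracts onto the sections pulled back from $t=1$, hence that its homotopy groups are $K^{-p}_G(G_0,P)$ — which is in general nonzero (already for $G=\{pt\}$ one gets $\IZ$ in degree $0$). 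So the conclusion ``every homotopy group based at $s$ is trivial'' does not follow.

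The underlying issue is that the vanishing in the lemma is not a statement about representable K-theory of a contractible base: the bare section space of Definition \ref{definition K-theory of X,P}, with no condition as $t\to 0$, computes a representable-type theory for which the lemma would be false. The vanishing comes from the open end at $t=0$: one must contract \emph{towards} $0$ (e.g.\ $\sigma\mapsto\sigma(\cdot,(1-u)t)$), and at $u=1$ this only lands on the basepoint if sections are required to tend to the basepoint (equivalently, to be ``trivial at infinity'' in the $t$-direction), a support condition your setup never imposes and which your homotopy toward $t=1$ cannot manufacture. This is precisely why the paper does not argue in the Fredholm picture at all: it invokes the identification with the $C^*$-algebraic picture (theorem 3.14 of \cite{tuxustacks}), where $C^*_r\bigl(G\times(0,1],P_{(0,1]}\bigr)\cong C^*_r(G,P)\otimes C_0((0,1])$ is a cone, hence homotopy equivalent to $0$ as a $C^*$-algebra, and the vanishing is the classical fact that cones have trivial K-theory. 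To salvage a direct Fredholm-picture proof you would need to (i) build the vanishing-at-$t\to 0$ condition into the section space (and check it still computes the groups of the lemma, via the $C^*$ comparison), and (ii) run your reparametrization homotopy toward $0$ rather than toward $1$; your continuity remarks about precomposition are fine, but they address a point that was never the obstacle.
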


The two previous lemmas are classic in the $C^*$-algebraic context, {\it i.e.,} once we use that the isomorphism between the twisted K-theory with the $C^*$-picture and the twisted K-theory with the Fredholm picture (theorem 3.14 \cite{tuxustacks}).

The following result is an immediate consequence of lemmas \ref{excisiongrpdK} and \ref{homotopygrpdK} above.

\begin{proposition}
Given an immersion of proper Lie groupoids $G_1\stackrel{\varphi}{\rightarrow}G_2$  and a twisting $\alpha$ on $G_2$, consider the twisted deformation groupoid $(G_\varphi,P_\alpha)$ of section \ref{defgrpds} (propositions \ref{HSimmer} and \ref{deftwistimm}). The morphism in K-theory induced by the restriction at zero,
\begin{equation}
\xymatrix{
K_{G_\varphi}^{-p}(G_{\varphi}^{(0)},P_{\varphi})\ar[rr]^-{e_0}&&
K_{G_2}^{-p}(G_{2}^{(0)},P_2)
}
\end{equation}
is an isomorphism.
\end{proposition}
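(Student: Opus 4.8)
The statement asserts that for an immersion of proper Lie groupoids $G_1\stackrel{\varphi}{\rightarrow}G_2$ with twisting $\alpha$ on $G_2$, the restriction-at-zero morphism $e_0\colon K_{G_\varphi}^{-p}(G_{\varphi}^{(0)},P_{\varphi})\to K_{G_2}^{-p}(G_{2}^{(0)},P_2)$ is an isomorphism. The overall strategy is the classical deformation-groupoid argument: decompose the deformation groupoid $G_\varphi$ into the open piece $G_2\times\mathbb{R}^*$ (or rather $G_2\times(0,1]$ after restricting the deformation parameter to a half-open interval, as is customary for the analytic index) and the closed piece $G_1^N\times\{0\}$, then feed this decomposition into the Mayer--Vietoris sequence of Lemma \ref{excisiongrpdK}(ii) and kill the contribution of the open part using the homotopy-invariance Lemma \ref{homotopygrpdK}.

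\textbf{Key steps.} First I would replace $G_\varphi$ by its restriction over $G_{2}^{(0)}\times(0,1]\sqcup (G_1^{(0)})^N\times\{0\}$; this is harmless because the evaluation at $t=1$ identifies $G_2\times(0,1]$ with $G_2$ up to the homotopy of Lemma \ref{homotopygrpdK}, and one should first check that $e_0$ factors through this restricted deformation groupoid. Call the restricted groupoid $G_\varphi^{[0,1)}$; it decomposes as $G_\varphi^{[0,1)} = (G_2\times(0,1]) \sqcup (G_1^N\times\{0\})$ where $G_1^N\times\{0\}$ is a \emph{closed} saturated subgroupoid (by Proposition \ref{HSimmer} and the $C^\infty$-structure on the deformation to the normal cone) and $G_2\times(0,1]$ is the complementary \emph{open} saturated subgroupoid. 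All three groupoids are proper: $G_2$ and $G_1$ are proper by hypothesis, and the normal groupoid $G_1^N$ inherits properness. Second, I would apply Lemma \ref{excisiongrpdK}(ii) to get the six-term (or long) exact sequence
\begin{equation}
\xymatrix{
\ar[r]&K_{G_2\times(0,1]}^{-p}(G_{2}^{(0)}\times(0,1],P_{(0,1]})\ar[r]&K_{G_\varphi^{[0,1)}}^{-p}(G_\varphi^{(0)},P_\varphi)\ar[r]^-{e_0}&K_{G_1^N}^{-p}((G_1^{(0)})^N,P_{\alpha\circ\varphi})\ar[r]&
}
\end{equation}
where $P_{(0,1]}$ is the pullback twisting and $P_\varphi$ the canonical deformation twisting of Proposition \ref{deftwistimm}. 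Third, Lemma \ref{homotopygrpdK} gives $K_{G_2\times(0,1]}^{-p}(G_{2}^{(0)}\times(0,1],P_{(0,1]})=0$ for all $p$, so both neighboring terms in the sequence vanish and the restriction map to the $t=0$ fiber $K_{G_1^N}^{-p}((G_1^{(0)})^N,P_{\alpha\circ\varphi})$ is an isomorphism. Fourth, I would compare $e_0$ in the statement (restriction to the $G_2$ side, i.e.\ evaluation at $t=1$ composed with the inclusion) with this $t=0$ restriction: the evaluation $e_1\colon K_{G_\varphi^{[0,1)}}^{-p}(G_\varphi^{(0)},P_\varphi)\to K_{G_2}^{-p}(G_2^{(0)},P_2)$ is an isomorphism because it is the restriction to the open complement of the closed subgroupoid $G_2\times(0,1)$ inside $G_2\times(0,1]$ together with Lemma \ref{homotopygrpdK} applied to $G_2\times(0,1)\cong G_2\times(0,1]$; since both $e_0$ (to the $t=0$ piece) and $e_1$ are isomorphisms, so is the composite that the proposition names $e_0$ (the wrong-way/pushforward being built precisely as $e_1\circ (e_0^{\text{new}})^{-1}$ in the direction $G_1^N\rightsquigarrow G_2$). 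The twisting bookkeeping is handled by Proposition \ref{deftwistimm}: $P_\varphi$ restricts to $P_2$-pullback on $t\neq 0$ and to the pullback of $P_\alpha$ along $\varphi_0$ on $t=0$, so the twistings match on all three pieces by construction.

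\textbf{Main obstacle.} The routine part is the Mayer--Vietoris bookkeeping; the delicate points are (a) verifying carefully that $G_2\times(0,1)$ is genuinely a \emph{closed saturated} subgroupoid of $G_2\times(0,1]$ and that $G_1^N\times\{0\}$ is closed saturated in $G_\varphi^{[0,1)}$ with the deformation-to-the-normal-cone $C^\infty$-structure, so that Lemma \ref{excisiongrpdK} genuinely applies, and (b) ensuring that the twisting $P_\varphi$ and its restrictions are the \emph{right} twistings so that the connecting maps in the long exact sequences are twisting-equivariant — this is exactly where Proposition \ref{deftwistimm} is invoked, but one must check the restriction of $P_\varphi$ to $G_2\times(0,1]$ is the pullback twisting appearing in Lemma \ref{homotopygrpdK} and its restriction to $t=0$ is $P_{\alpha\circ\varphi}$. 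The single biggest conceptual hurdle is the identification of $e_0$ in the statement with the composite $e_1\circ(e_0^{\text{at }t=0})^{-1}$: one must be explicit about which evaluation map the proposition calls $e_0$ (it is the evaluation at the $G_2$-end, not the normal-cone end), and then the isomorphism claim is the statement that \emph{both} ends of the deformation induce isomorphisms, which is what the two lemmas together deliver.
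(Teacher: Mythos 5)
Your first three steps are exactly the paper's argument: the paper gives no separate proof of this proposition, stating it is an immediate consequence of Lemma \ref{excisiongrpdK}(ii) and Lemma \ref{homotopygrpdK}, i.e.\ one decomposes the deformation groupoid (restricted over $[0,1]$) into the closed saturated piece $G_1^N\times\{0\}$ and the open saturated piece $G_2\times(0,1]$, feeds this into the long exact sequence, and kills the open part by the vanishing of $K_{G_2\times(0,1]}^{-p}$; the twisting bookkeeping is exactly Proposition \ref{deftwistimm}, as you say. Note also that the target printed in the proposition is a misprint: ``restriction at zero'' lands in $K_{G_1^N}^{-p}((G_1^{(0)})^N,P_1^N)$, not in $K_{G_2}^{-p}(G_2^{(0)},P_2)$, as the very next definition $Ind_\varphi:=e_1\circ e_0^{-1}\colon K_{G_1^N}^{-p}((G_1^{(0)})^N,P_1^N)\to K_{G_2}^{-p}(G_2^{(0)},P_2)$ makes clear. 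With that corrected target, your steps 1--3 are the intended proof and nothing more is needed.

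Your fourth step, however, contains a genuine error. In order to reach the misprinted target you claim that the evaluation $e_1$ at $t=1$ is also an isomorphism, arguing via the complement $G_2\times(0,1)$ inside $G_2\times(0,1]$. This is wrong on two counts. First, the relevant complement of the $t=1$ fibre in the deformation groupoid is $G_\varphi|_{[0,1)}$, which contains the normal cone $G_1^N\times\{0\}$ and is not $K$-contractible, so Lemma \ref{homotopygrpdK} does not apply; moreover $G_2\times(0,1)$ itself has the $K$-theory of a suspension of $G_2$ (the lemma is specifically about the half-open interval $(0,1]$), so even the auxiliary vanishing you invoke fails. Second, the conclusion cannot be true in general: if $e_1$ were always an isomorphism, the index morphism $Ind_\varphi=e_1\circ e_0^{-1}$ would always be invertible, which already fails for the tangent groupoid of a closed manifold, where it is the analytic index $K^0(T^*M)\to \mathbb{Z}$. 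So you should delete step four, read the proposition with target $K_{G_1^N}^{-p}((G_1^{(0)})^N,P_1^N)$, and keep only the exact-sequence argument, which then coincides with the paper's proof.
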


\begin{definition}[Index associated to a groupoid immersion]
Given an immersion of proper Lie groupoids $G_1\stackrel{\varphi}{\rightarrow}G_2$  as above and a twisting $\alpha$ on $G_2$, we let
\begin{equation}
Ind_\varphi:K_{G_1^N}^{-p}((G_{1}^{(0)})^N,P_1^N)\to K_{G_2}^{-p}(G_{2}^{(0)},P_2)
\end{equation}
to be the morphism in K-theory given by $Ind_\varphi:=e_1\circ e_{0}^{-1}$.
\end{definition}

We are ready to define the shriek map. Let $G\rightrightarrows G_0$ be a Lie groupoid together with a twisting $P$. Let $X,Y$ be two $G$-proper manifolds and let $f:X\to Y$ be a smooth $G$-map with $T^*X\oplus f^*TY$ a $G$-$Spin^c$ vector bundle that we will assume in a first time to have even rank. We will also assume the moment maps $X\to G_0$ and $Y\to G_0$ to be submersions, then $T^*X\oplus f^*TY$ being $Spin^c$ is equivalent to 
$V_f:=T_v^*X\oplus f^*T_vY$ being $Spin^c$.
The shriek morphism
\begin{equation}
\xymatrix{
f!:K_{G}^{-p}(X,P_X) \ar[r]^-{f_!}&K_{G}^{-p-d_f}(Y,P_Y),
}
\end{equation}
where $d_f:=rank\, V_f$,
will be given as the composition of the following three morphism

{\bf I.} The twisted $G$-equivariant Thom isomorphism 

\begin{equation}\label{Step1map}
\xymatrix{
K_{G}^{-p}(X,P_X)\ar[r]^-{T}_-{\cong}&K_{G}^{-p-d_f}(T_v^*X\bigoplus f^*T_vY,P_{V_f}).
}
\end{equation}

{\bf II.}  
We consider now the index morphism 

\begin{equation}\label{Step2map}
\xymatrix{
K_{(T_vX\bigoplus f^*T_vY)\rtimes G}^{-p-d_f}(f^*T_vY,P)\ar[r]^-{Ind}& K_{ f^*T_vY\rtimes (T_vX\rtimes G)}^{-p-d_f}(f^*T_vY,P)
}
\end{equation}
associated to the immersion 
$$f^*T_vY\rtimes G\longrightarrow f^*T_vY\rtimes (T_vX\rtimes G)$$
given by the product of the identity in $G$ and the inclusion of the units $f^*T_vY$ in the groupoid $f^*T_vY\rtimes T_vX$.

{\bf III.}
Consider the groupoid immersion
\begin{equation}\label{tildef}
\xymatrix{
X\rtimes G\ar[rr]^-{\tilde{f}} && (Y\times_{G_0}(X\times_{G_0} X))\rtimes G,
}
\end{equation}
where $\tilde{f}:=(f\times \triangle)\times Id_G$. Then the induced deformation groupoid is 
$$G_f\rtimes G$$
where
$$G_f\rightrightarrows G_{f}^{(0)}$$
is the groupoid given by
\begin{equation}\label{Gf}
G_f:=f^*(T_vY)\rtimes T_vX\times \{0\}\bigsqcup Y\times_{G_0}
(X\times_{G_0} X) \times (0,1]\, \text{ and }
\end{equation}
\begin{equation}
G_{f}^{(0)}=f^*T_vY\times \{0\}\bigsqcup Y\times_{G_0} 
X\times (0,1]
\end{equation}
Notice that $Y\times_{G_0}(X\times_{G_0} X)$ and $Y$ are Morita equivalent groupoids with Morita equivalence the canonical projection.

Let $\alpha_f$ the twisting on $G_f\rtimes G$ given by proposition \ref{deftwistimm}. It is immediate to check that $\alpha_f|_{(f^*(T_vY)\rtimes T_vX)\rtimes G}
=\pi_{f^*T_vY\rtimes T_vX}^*\alpha$.

We can hence consider the twisted deformation index morphism associated to $(G_f\rtimes G,\alpha_f)$ :

\begin{equation}\label{Step3map}
\xymatrix{
K_{f^*T_vY\rtimes (T_vX\rtimes G)}^{-p-d_f}(f^*T_vY,P)\ar[r]^-{Ind_f}&
K_{(Y\times_{G_0}(X\times_{G_0} X))\rtimes G}^{-p-d_f}(Y\times_{G_0}X,P)\ar[d]^-{\mu}_-{\cong}\\&
K_{G}^{-p-d_f}(Y,P)
}
\end{equation} 

For composing \ref{Step1map} with \ref{Step2map} remember that by the Fourier isomorphism proved in proposition 2.12 in \cite{CaWangAdv} and by theorem 3.14 in \cite{tuxustacks} we have an isomorphism

$$K_{G}^{*}(T_v^*X\bigoplus f^*T_vY,P_{V_f})\approx K_{(T_vX\bigoplus f^*T_vY)\rtimes G}^{*}(f^*T_vY,P).$$

We can now give the following definition:

\begin{definition}[Pushforward morphism for twisted $G$-manifolds]
Let $X,Y$ be two manifolds and $f:X\longrightarrow Y$ a smooth map as above. Under the presence of a twisting P on $G$ we let
\begin{equation}
\xymatrix{
K_{G}^{-p}(X,P_X) \ar[r]^-{f_!}&K_{G}^{-p-d_f}(Y,P_Y)
}
\end{equation}
to be the morphism given by the composition of the three morphisms described above, \ref{Step1map} followed by \ref{Step2map} followed by \ref{Step3map}.
\end{definition}

One of the main results is that the pushforward maps is compatible with the product:

\begin{proposition}\label{shriekring}
Let $G\rightrightarrows G_0$ be a Lie groupoid. Let $X,Y$ be two $G$-proper manifolds and let $f:X\to Y$ be a $G$-smooth K-oriented map with $T_v^*X\bigoplus f^*T_vY$ of even rank. Let $P$ and $P'$ two $G$-$PU(H)$-principal bundles over $G_0$. The following diagram is commutative:
\begin{equation}\label{shriekringdiag}
\xymatrix{
K^{-p}_G(X;P)\times K^{-q}_G(X;P')\ar[d]_-{f!\times f!}\ar[r]^-\bullet & K^{-(p+q)}_G(X;P\otimes P')\ar[d]^-{f!}\\
K^{-p}_G(Y;P)\times K^{-q}_G(Y;P')\ar[r]_-\bullet & K^{-(p+q)}_G(Y;P\otimes P')
}
\end{equation}
\end{proposition}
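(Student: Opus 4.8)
The plan is to exploit the fact that $f_!$ is, by its very construction, a composite of three maps — the twisted $G$-equivariant Thom isomorphism $T$ of \eqref{Step1map}, the index morphism $Ind$ of \eqref{Step2map} attached to a groupoid immersion, and the twisted deformation index morphism $Ind_f$ of \eqref{Step3map} (which is itself built from a Morita equivalence isomorphism $\mu$ and, together with the passage from \eqref{Step1map} to \eqref{Step2map}, from the Fourier isomorphism of \cite{CaWangAdv}) — and to prove separately that $\bullet$ is compatible with each of these three pieces. Diagram \eqref{shriekringdiag} then follows by stacking the three resulting commutative squares, reshuffling factors with Corollary \ref{prodFredassoc} when composing.

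For the Thom step I would invoke Theorem \ref{Thomring}: since $f$ is K-oriented, $V_f=T_v^*X\bigoplus f^*T_vY$ is a $G$-$Spin^c$ bundle of even rank, the orientation twisting $\beta_{V_f}$ is canonically trivialized, and $T$ is compatible with $\bullet$. The point to check by hand here is that the twistings on the two sides really match: going first through $\bullet$ and then through $T$ one meets $\pi^*(P\otimes P')\otimes\beta_{V_f}$, while applying $T$ to each factor and then $\bullet$ one meets $\pi^*(P\otimes P')\otimes\beta_{V_f}\otimes\beta_{V_f}$; the direct-sum $Spin^c$ structure gives the canonical identification $\beta_{V_f}\otimes\beta_{V_f}\cong\beta_{V_f\oplus V_f}$, Proposition \ref{propprodwelldef} guarantees that $\bullet$ is insensitive to the resulting change of trivialization, and Proposition \ref{pullring} takes care of the pullbacks $\pi^*$ occurring in the definition of $T$.

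For the index morphism \eqref{Step2map} and the deformation index morphism \eqref{Step3map} the argument is the conceptual one announced in the Introduction. Both are assembled out of restriction maps to saturated closed subgroupoids — the evaluations $e_0$, $e_1$ on the $\{0\}$- and $\{1\}$-slices of a deformation groupoid, composed with $e_0^{-1}$ — together with the Morita isomorphism $\mu$ and the Fourier isomorphism. The key observation is that $\bullet$ is induced by the \emph{fibrewise} operation $(A,B)\mapsto A\widehat{\otimes}I\oplus I\widehat{\otimes}B$ on $\Fred^{(0)}(\Hg)$ (Proposition \ref{propprodwelldef}), transported when needed through the natural, equivariant zig-zag $\Fred^{(0)}(\Hg)\xrightarrow{i}\Fred''(\Hg)\xrightarrow{r}\Fred^{(0)}(\Hg)$, $\Fred''(\Hg)\xrightarrow{f}\Fred'(\HH)$ of Section \ref{sectionmultiplicative}. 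Restricting a section to a subgroupoid, pulling back a section along a Morita bi-bundle, and applying the Fourier transform are all operations performed pointwise on sections, hence they commute with any such fibrewise operation and are therefore ring/module homomorphisms for $\bullet$; this is exactly the assertion that ``the morphisms induced by restriction are compatible with the product''. I would also verify, using Proposition \ref{deftwistimm} and the functoriality \eqref{fundnc} of the deformation to the normal cone, that the deformation twisting $\alpha_f$ attached to $P\otimes P'$ is canonically the graded tensor of the ones attached to $P$ and to $P'$ — immediate, since the pullback of twistings and the deformation-to-the-normal-cone functor both commute with the tensor operation on principal $PU(H)$-bundles.

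Pasting the three commutative squares yields \eqref{shriekringdiag}. The hard part, I expect, will be bookkeeping rather than concept: one must carry along, through all three stages simultaneously, the various twistings in play (the tensor $P\otimes P'$, the pullback twistings $P_X$ and $P_{V_f}$, the orientation twistings $\beta_{V_f}$ with their $Spin^c$ trivializations, and the deformation twistings $\alpha_f$) together with the degree shift $d_f$, and confirm that the Thom-class contributions produced by the two separate factors $x$ and $x'$ recombine into the single Thom-class contribution of $f_!(x\bullet x')$ — this is the one place where the $Spin^c$ hypothesis and the $\bullet$-compatibility of the Thom isomorphism in Theorem \ref{Thomring} genuinely enter. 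Once the twistings are pinned down, each square commutes for the soft reason that $\bullet$ is a natural, fibrewise operation.
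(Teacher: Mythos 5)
Your proposal follows essentially the same route as the paper's own proof: decompose $f_!$ into the Thom isomorphism plus the two deformation indices, observe that the indices are built from restriction/evaluation morphisms (and Morita/Fourier identifications) which, being fibrewise operations on sections, commute with the fibrewise product $\bullet$, and invoke Theorem \ref{Thomring} for the Thom step. Your extra bookkeeping of the twistings ($\beta_{V_f}$, the $Spin^c$ trivializations, the deformation twisting attached to $P\otimes P'$) is a correct and welcome elaboration of what the paper leaves implicit, but it is not a different argument.
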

\begin{proof}
By definition $f!$ is constructed by means of a Thom isomorphism and of two deformation indices. These indices are at their turn constructed by restriction (or evaluation) morphisms. To conclude the proof one has only to observe that restrictions are obviuosly compatible with the product together with the fact that Thom is also compatible with the product, see \ref{Thomring}.
\end{proof}

\subsection{The Pullback:}
Let $A\stackrel{h}{\longrightarrow}B$ be a smooth $G$-equivariant map ($A, B$ $G$-proper manifolds). Suppose we have a $PU(H)$-principal $G$-bundle $P$ over $G_0$. We are going to consider, for every $q\in \mathbb{N}$, the pullback

\begin{equation}
h^*: K_{G}^{-q}(B,P_B)\longrightarrow K_{G}^{-q}(A,P_A)
\end{equation}
given as follows: If $\gamma:S^q \to \Gamma(B,Fred(\hat{P_B}))^G$ is a continuous map with $\gamma(*)=s$ one let 
$$h^*\gamma:S^q\to\Gamma(A,Fred(\hat{P_A}))^{A\rtimes G}$$
to be given by 
$$(h^*\gamma)(z)(a):=\gamma(z)(h(a)),$$
it is then classic to show that it induces a map between the homopoty classes.

More generally we will need a pullback map associated to a $G$-equivariant Hilsum-Skandalis map. We explain next what do we mean by this.

Consider a Lie groupoid $H_A\rightrightarrows A$, we say that it is a $G$-groupoid if $G$ acts on $H_A$, on $A$ and the source and target maps of $H_A$ are $G-$equivariant. Under this situation we might form the semi-direct product groupoid 
$$H_A\rtimes G\rightrightarrows A.$$
Suppose now that we have two $G$-proper (all the actions are required to be proper) Lie groupoids  $H_A$ and $H_B$ together with a generalized morphism $h:H_A--->H_B$ between them, that is, suppose we are given a $H_B$-principal bundle $P_h$ over $H_A$, putting this in a diagram:
\[
\xymatrix{
H_A\ar@<.5ex>[d]\ar@<-.5ex>[d]&P_h\ar[rd]^-{s_h}\ar[ld]_-{t_h}
&H_B\ar@<.5ex>[d]\ar@<-.5ex>[d]\\
A&&B
}
\]
We are going to consider, for every $q\in \mathbb{N}$, the pullback

\begin{equation}
h^*: K_{H_B\rtimes G}^{-q}(B,P_B)\longrightarrow K_{H_A\rtimes G}^{-q}(A,P_A)
\end{equation}
given as follows: If $\gamma:S^q \to \Gamma(B,Fred(\hat{P_B}))^{H_B\rtimes G}$ is a continuous map with $\gamma(*)=s$ one let 
$$h^*\gamma:S^q\to\Gamma(A,Fred(\hat{P_A}))^{H_A\rtimes G}$$
to be given by 
$$(h^*\gamma)(z)(a):=\gamma(z)(b)$$
where $b=s_h(v)$ for some $v\in t_h^{-1}(a)$. One proves using the invariance of $\gamma$ together with the identification
$P_h\times_{H_B\rtimes G}Fred(\hat{P_B})=Fred(\hat{P_A})$ that the definition of $h^*\gamma$ does not depend on the choice of $v$.

The use of the Fredholm picture for K-theory allows to give a very classic definition for the pullback map and to adapt word by word the classic proofs that it is well defined and the following naturality result:

\begin{lemma}\label{pullnatural}
The pullback is natural. The following properties hold:
\begin{enumerate}
\item $Id^*=Id$
\item $(h_2\circ h_1)^*=h_1^*\circ h_2^*$
\end{enumerate}
\end{lemma}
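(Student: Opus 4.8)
The plan is to verify both identities by unwinding the explicit formula that defines the pullback, using the description of composition of generalized morphisms via fibered products of bibundles recalled in Section~\ref{HScat}, together with the fact (already noted just before the statement) that $h^*\gamma$ is well defined thanks to the $H_B\rtimes G$-invariance of $\gamma$ and the canonical identification $P_h\times_{H_B\rtimes G}\Fred(\widehat{P_B})=\Fred(\widehat{P_A})$.

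For property (i) I would observe that the identity generalized morphism $Id\colon H_A\dashrightarrow H_A$ is represented by the unit bibundle $P_{Id}=H_A$, with structure maps the range and source maps of $H_A$. For $a\in A$ the fibre of the bundle projection over $a$ is a single orbit of the principal $H_A$-action; any $v$ in it has $s_{Id}(v)$ related to $a$ through the action of $v$ itself, and the induced identification $P_{Id}\times_{H_A\rtimes G}\Fred(\widehat{P_A})=\Fred(\widehat{P_A})$ is the canonical one. Hence $Id^*\gamma$ is literally $\gamma$ as a section, and therefore $Id^*=Id$ on homotopy classes.

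For property (ii), let $h_1\colon H_A\dashrightarrow H_B$ and $h_2\colon H_B\dashrightarrow H_C$ be $G$-equivariant generalized morphisms with bibundles $P_{h_1}$ and $P_{h_2}$, so that $h_2\circ h_1$ is represented by $P_{h_1}\times_{H_B}P_{h_2}$, with target sending $[v,w]$ to $t_{h_1}(v)$ and source sending $[v,w]$ to $s_{h_2}(w)$. Fix $\gamma\colon S^q\to\Gamma(C,\Fred(\widehat{P_C}))^{H_C\rtimes G}$ with $\gamma(*)=s$ and fix $a\in A$. Choosing $v\in t_{h_1}^{-1}(a)$, putting $b=s_{h_1}(v)$, then choosing $w\in t_{h_2}^{-1}(b)$ and putting $c=s_{h_2}(w)$, one has $[v,w]\in t_{h_2\circ h_1}^{-1}(a)$ and $s_{h_2\circ h_1}([v,w])=c$, so that
\[
((h_2\circ h_1)^*\gamma)(z)(a)=\gamma(z)(c)=(h_2^*\gamma)(z)(b)=(h_1^*(h_2^*\gamma))(z)(a).
\]
Since $a$ and the auxiliary choices are arbitrary, this yields $(h_2\circ h_1)^*\gamma=h_1^*(h_2^*\gamma)$ as sections, hence the asserted equality after passing to homotopy classes.

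The one point that needs genuine care --- and where I expect to spend most of the writing --- is to check that these are honest equalities of sections rather than equalities modulo the identifications $P_h\times_{H_B\rtimes G}\Fred(\widehat{P_B})=\Fred(\widehat{P_A})$: concretely, that the identification attached to the composite bibundle $P_{h_1}\times_{H_B}P_{h_2}$ agrees with the composite of those attached to $P_{h_1}$ and $P_{h_2}$, and that this is compatible with the independence of the choices of $v$ and $w$ established earlier. This is a routine diagram chase relying on associativity of the bibundle fibered product already invoked in Section~\ref{HScat}, and once it is in place statements (i) and (ii) follow formally as above; I do not expect any genuine obstacle beyond this bookkeeping.
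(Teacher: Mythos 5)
Your proposal is correct and takes essentially the same route the paper intends: the paper gives no written proof, asserting only that the Fredholm-picture definition allows one to adapt the classical arguments word for word, and your unwinding of the unit bibundle for (i) and of the composite bibundle $P_{h_1}\times_{H_B}P_{h_2}$ for (ii) is exactly that adaptation. The one step you defer --- that the identification $P_h\times_{H_B\rtimes G}\Fred(\widehat{P_B})=\Fred(\widehat{P_A})$ for the composite bibundle agrees with the composite of the identifications for $P_{h_1}$ and $P_{h_2}$ --- is precisely the bookkeeping the paper also leaves implicit, so there is no gap relative to the paper's own standard.
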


The following proposition is also an example of the convenience of the Fredholm model for $K$-theory, indeed, its $C^*$-algebraic analog is much harder to prove and corresponds to Le Gall's pullback naturality with respect to Kasparov's products. Of course Le Gall's results are more general and apply to more complicated situations (see remark below). Here we only need for the moment the proper case. We state the result.

\begin{proposition}\label{pullring}
Let $G\rightrightarrows G_0$ be a Lie groupoid. Let $A,B$ be two $G$-proper manifolds and let $h:A\to B$ be a $G$-smooth K-oriented map. Let $P$ and $P'$ two $G$-$PU(H)$-principal bundles over $G_0$. The following diagram is commutative:
\begin{equation}\label{pullringdiag}
\xymatrix{
K^{-p}_G(B;P)\times K^{-q}_G(B;P')\ar[d]_-{h^*\times h^*}\ar[r]^-\bullet & K^{-(p+q)}_G(B;P\otimes P')\ar[d]^-{h^*}\\
K^{-p}_G(A;P)\times K^{-q}_G(A;P')\ar[r]_-\bullet & K^{-(p+q)}_G(A;P\otimes P')
}
\end{equation}
\end{proposition}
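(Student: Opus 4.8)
The plan is to exploit the rigidity of the pullback in the Fredholm picture: $h^*$ acts on a representing map $\gamma\colon S^q\to\Gamma(B;\Fred^{(0)}(\widehat{P_B}))^{B\rtimes G}$ simply by precomposition with $h$ on the base, $(h^*\gamma)(z)(a)=\gamma(z)(h(a))$, whereas the product $\bullet$ is induced by the bundle map $m$ of Proposition \ref{propprodwelldef}, which acts \emph{fiberwise} by $(A,A')\mapsto A\sharp A'=A\widehat{\otimes}I\oplus I\widehat{\otimes}A'$. An operation on the base and an operation on the fibers commute, so (\ref{pullringdiag}) will commute essentially on the nose; the work is bookkeeping about bundle identifications.

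First I would record the relevant identifications of bundles. Since $h$ is $G$-equivariant it intertwines the moment maps $A\to G_0$ and $B\to G_0$, so $P_A\cong h^*P_B$ and $P'_A\cong h^*P'_B$ as $PU(H)$-principal $A\rtimes G$-bundles; and since the transition cocycle of $P\otimes P'$ is the graded tensor product of those of $P$ and $P'$, and pullback of cocycles commutes with this tensor product, also $(P\otimes P')_A\cong h^*\big((P\otimes P')_B\big)$. Passing to associated Fredholm bundles gives $\Fred^{(0)}(\widehat{P_A})\cong h^*\Fred^{(0)}(\widehat{P_B})$, and likewise for $P'$ and for $P\otimes P'$, and these identifications are compatible with $m$ because $m$ is given by a base-independent fiber formula.

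Next I would check the one computation with content: for $B\rtimes G$-invariant sections $\sigma$ of $\Fred^{(0)}(\widehat{P_B})$ and $\sigma'$ of $\Fred^{(0)}(\widehat{P'_B})$,
\[
\big(m_A(h^*\sigma,h^*\sigma')\big)(a)=\sigma(h(a))\,\sharp\,\sigma'(h(a))=\big(m_B(\sigma,\sigma')\big)(h(a))=\big(h^*m_B(\sigma,\sigma')\big)(a),
\]
while $h^*\sigma$ and $h^*\sigma'$ remain $A\rtimes G$-invariant by equivariance of $h$; in particular $h^*$ fixes the distinguished section $s$ and intertwines $m_B$ with $m_A$. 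Finally I would push this through the definition of the product on homotopy groups: given based representatives $\gamma_1\colon S^p\to\Gamma(B;\Fred^{(0)}(\widehat{P_B}))^{B\rtimes G}$ and $\gamma_2\colon S^q\to\Gamma(B;\Fred^{(0)}(\widehat{P'_B}))^{B\rtimes G}$, the class $[\gamma_1]\bullet[\gamma_2]$ is represented by $(z,w)\mapsto m_B(\gamma_1(z),\gamma_2(w))$ on $S^p\times S^q$, which sends $S^p\vee S^q$ into the basepoint component and hence descends to $S^{p+q}$. Because $h^*\circ m_B=m_A\circ(h^*\times h^*)$ and $h^*$ is a based map of section spaces, applying $h^*$ commutes with every step of this smash-product recipe, so $h^*(\gamma_1\bullet\gamma_2)$ and $(h^*\gamma_1)\bullet(h^*\gamma_2)$ are the \emph{same} map on $S^p\times S^q$, namely $(z,w)\mapsto\big(a\mapsto\gamma_1(z)(h(a))\,\sharp\,\gamma_2(w)(h(a))\big)$; hence they represent the same class in $K^{-(p+q)}_G(A;P\otimes P')$, which is the commutativity of (\ref{pullringdiag}).

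I do not expect a genuine obstacle: the statement is soft. The only points requiring care are (i) that the bundle isomorphisms above are natural and $A\rtimes G$-equivariant, so that the Fredholm bundles and the multiplication map $m$ are honestly identified under $h^*$ rather than only abstractly isomorphic, and (ii) that the recipe for the product on homotopy groups (Subsection \ref{sectionmultiplicative}) is manifestly functorial for base-preserving maps — both routine once the notation is pinned down. I would also note in passing that the K-orientation of $h$ plays no role in this proposition; it is inherited from the surrounding discussion and is needed only when $h^*$ is combined with the pushforward maps, e.g. in Theorem \ref{mainthmintro}.
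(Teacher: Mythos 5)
Your argument is correct and is essentially the paper's own proof: the paper likewise observes that all twistings are pulled back from $G_0$ via the moment maps, so $h$ induces a bundle map $\widetilde{h}$ over which the fiberwise multiplication map $m$ of Proposition \ref{propprodwelldef} trivially commutes, and then concludes by the definition of the pullback as precomposition. Your write-up simply makes explicit the section-level and homotopy-group bookkeeping that the paper leaves implicit (and your remark that the K-orientation of $h$ is not used is accurate).
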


\begin{proof}
Remember that in the notations above, for a $G$-proper manifold $X$, $K^{-p}_G(A;P)$ means we are considering equivariant twisted $K-$theory of $X$ with respect to the bundle $P_X:=\pi_X^*P=X\times_MP$ where $\pi_X:X\to M$ is the momentum map of the $G$-action. 
In particular if $h$ is as above, there is an induced bundle map $\widetilde{h}:P_A\to P_B$ given by $h$ in the direction of $A$ and the identity in the direction of $P$. The same of course applies for the bundles used below (all the bundles come from M by pullback). In particular there is a trivially commutative diagram of bundles
\begin{equation}
\xymatrix{
Fred^{(0)}(\hat{P}_B)\times Fred^{(0)}(\hat{P'}_B)\ar[r]^-m&Fred^{(0)}((\hat{P}\otimes \hat{P'})_B)\\
Fred^{(0)}(\hat{P}_A)\times Fred^{(0)}(\hat{P'}_A)\ar[u]^-{\widetilde{h}}\ar[r]_-m&Fred^{(0)}((\hat{P}\otimes \hat{P'})_A)\ar[u]_-{\widetilde{h}}
}
\end{equation}
where $m$ is the map defined in proposition \ref{propprodwelldef} to properly define the product $\bullet$. By definition of the pullback, the commutativity of the above diagram induces the commutativity of diagram (\ref{pullringdiag}).
\end{proof}

\begin{remark}[On Le Gall's descent functors]
The definition of the pullback above recalls Le Gall's pullback construction on the untwisted case which generalizes Kasparov descent morphisms. The simplicity of our construction is due to the fact that we are only dealing with the proper action case. In the general case is certainly possible to adapt Le Gall's to $S^1$-central extensions and then to apply it to the Twisted K-theory case. In particular we could prove the proposition above using theorem 7.2 in \cite{LeGall} which states the naturality of the pullback with respect to the Kasparov product. We prefered however to give a direct proof since in the proper case it is possible.
\end{remark}

The main property of this section is the naturality of the pushforward maps with respect to pullbacks, this is one of the new and one of the main key technical result in this paper.

\begin{proposition}\label{shrieknatural}
Let $G\rightrightarrows G_0$ be a Lie groupoid together with a twisting $P$. Suppose we have a commutative diagram of $G$-smooth K-oriented maps between $G$-proper manifolds
$$
\xymatrix{
A\ar[r]^-g\ar[d]_-p&A'\ar[d]^-q\\
B\ar[r]_-f&B'
}
$$
Then we have the following equality between K-theory morphisms 
$$g!\circ p^*=q^*\circ f!$$
\end{proposition}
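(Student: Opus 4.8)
The statement is a base-change (or Mayer–Vietoris-free) compatibility: pushforward along the horizontal maps commutes with pullback along the vertical maps in a commutative square. My strategy is to unwind the definition of the pushforward $f!$ as the three-step composition of Section~\ref{pushforwardsection} (Thom isomorphism, then two deformation indices) and check that each of the three ingredients is natural with respect to the pullback $q^*$. Since the pullback in the Fredholm picture is defined literally by precomposition of sections, $(h^*\gamma)(z)(a):=\gamma(z)(h(a))$, naturality of each step reduces to the commutativity of an essentially tautological diagram of bundles over the relevant manifolds, exactly as in the proof of Proposition~\ref{pullring}.

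First I would set up the total diagram: from the given square $g\colon A\to A'$, $p\colon A\to B$, $q\colon A'\to B'$, $f\colon B\to B'$ one produces, functorially, a commutative square of the auxiliary data entering the two pushforwards $f!$ and $g!$ — the vertical tangent bundles $V_f=T_v^*B\oplus f^*T_vB'$ and $V_g=T_v^*A\oplus g^*T_vA'$, the $Spin^c$-structures, the groupoid immersions $\tilde f$ and $\tilde g$ of \eqref{tildef}, and the associated deformation groupoids $G_f$, $G_g$ of \eqref{Gf}. Here the hypothesis that $p$ and $q$ are themselves $G$-smooth $K$-oriented maps is what makes the vertical direction interact correctly with the vertical tangent bundles; one gets induced $G$-maps $G_g\to G_f$ compatible with the deformation parameter $t\in[0,1]$ (and with the restriction-at-$0$ and restriction-at-$1$ evaluation maps), and likewise induced maps of the twistings from Proposition~\ref{deftwistimm} by functoriality of the deformation to the normal cone (the functor \eqref{fundnc}).

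Then I would verify naturality step by step. For step~I: the Thom isomorphism $T$ of \eqref{Step1map} is natural with respect to pullback along $q$ because the Thom class $[\lambda_{-1}^E]$ of Theorem~\ref{Thomring} is defined locally from representation-theoretic data of the isotropy groups and the pullback of the orientation twisting is the orientation twisting of the pulled-back bundle; concretely this is the statement that $q^*$ commutes with $s\mapsto [e\mapsto s(\pi(e))\bullet\lambda_{-1}^E(\pi(e))]$, which again comes down to $q$ intertwining the relevant multiplication maps of Fredholm bundles. For steps~II and~III: each index $Ind_\varphi=e_1\circ e_0^{-1}$ is built from the evaluation morphisms $e_0$, $e_1$ of a deformation groupoid, and these evaluations are plainly natural with respect to pullback along any $G$-map compatible with the deformation parameter — pullback commutes with restriction to a saturated closed subgroupoid. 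Since the induced maps $G_g\to G_f$ and on twistings are compatible with $e_0,e_1$, we get $q^*\circ Ind_f = Ind_g\circ q^*$ and similarly for the index of step~II. Composing the three commuting squares yields $q^*\circ f! = g!\circ p^*$.

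The main obstacle, I expect, is not any single step but the bookkeeping: one must check that the square of auxiliary groupoids, twistings, and $Spin^c$-structures really is commutative \emph{on the nose} (not merely up to Morita equivalence or isomorphism of twistings), so that the three naturality squares can be concatenated without introducing correction isomorphisms. In particular one must be careful that the Morita equivalence $\mu$ appearing at the bottom of \eqref{Step3map} (between $(Y\times_{G_0}(X\times_{G_0}X))\rtimes G$ and $Y\rtimes G$) is compatible with pullback — this follows because pullback is functorial for generalized (Hilsum–Skandalis) morphisms, which is exactly the content of the more general pullback construction and Lemma~\ref{pullnatural}, but it needs to be invoked. Once the commutativity of the diagram of deformation data is in place, the rest is the same soft argument used for Propositions~\ref{shriekring} and~\ref{pullring}: everything in sight is defined by (pre)composition of equivariant sections of Fredholm bundles, and pullback is precomposition, so all squares commute for formal reasons.
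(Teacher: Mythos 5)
Your proposal matches the paper's own argument: the paper likewise splits the square into naturality diagrams for each constituent of $f!$ --- the Thom step, the two deformation indices obtained via functoriality of the deformation to the normal cone (inducing $G$-groupoid morphisms $G_g\to G_f$ and on tangent groupoids, with pullback commuting with the evaluations $e_0,e_1$), and the Morita equivalence $\mu$ --- and then concatenates the four commuting squares. Your plan identifies all of these ingredients, including the compatibility of $\mu$ with pullback that you rightly flag, so it is essentially the same proof.
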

\begin{proof}
We have to show that the following diagram is commutative
\begin{equation}\label{totaldiag}
\xymatrix{
K_{G}^{*}(A,P_A)\ar[r]^-{g!}&K_{G}^{*}(A',P_{A'})&\\
K_{G}^{*}(B,P_B)\ar[u]^-{p^*}\ar[r]_-{f!}&K_{G}^{*}(B',P_{B'})\ar[u]_-{q^*}
}
\end{equation}
We will split the above diagram in four commutative diagrams:

{\bf Diagram I.} 
Consider the following commutative diagram of groupoid morphisms which are equivariant with respect to the $G$-action:
$$
\xymatrix{
A'\times_{G_0}(A\times_{G_0}A)\ar[d]_-{q\times \triangle(p)}\ar[rr]^-{Id_{A'}\times Pr_{G_0}}&&A'\times_{G_0}G_0\ar[d]^-{q\times Id_{G_0}}\\
B'\times_{G_0}(B\times_{G_0}B)\ar[rr]^-{Id_{B'}\times Pr_{G_0}}&&B'\times_{G_0}G_0
}
$$
Once identifying $A'\times_{G_0}G_0$ with $A'$ (and respectively for $B'$) we have that $Id_{A'}\times Pr_{G_0}$ induces the Morita equivalence of groupoids between $A'\times_{G_0}(A\times_{G_0}A)$ and $A'$ with inverse a Hilsum-Skandalis isomorphism that induces the isomorphism $\mu$ in K-theory. Hence the diagram above induces the following commutative diagram in $K$-theory:
\begin{equation}\label{DiagI}
\xymatrix{
K_{G\ltimes(A'\times_{G_0}(A\times_{G_0}A))}^{*}(A'\times_{G_0}A,P_{A'\times_{G_0}A})\ar[rr]^-{\mu}_-{\approx}\ar@{}[rrd]|-{{\bf I}}&&K_{G}^{*}(A',P_{A'})\\
K_{G\ltimes(B'\times_{G_0}(B\times_{G_0}B))}^{*}(B'\times_{G_0}B,P_{B'\times_{G_0}B})\ar[u]^-{(q\times_{G_0}\triangle(p))^*}\ar[rr]_-{\mu}^-{\approx}&&K_{G}^{*}(B',P_{B'})\ar[u]_-{q^*}
}
\end{equation}
{\bf Diagram II.} 
Remember the $G$-groupoid immersions 
$$A\stackrel{g\times \triangle}{\longrightarrow}A'\times_{G_0}(A\times_{G_0}A)$$
and 
$$B\stackrel{f\times \triangle}{\longrightarrow}B'\times_{G_0}(B\times_{G_0}B)$$
used above to construct the deformation indices ( see (\ref{tildef}) and \ref{Step3map}). They fit in the following commutative diagram of $G$-morphisms:
\begin{equation}
\xymatrix{
A'\times_{G_0}(A\times_{G_0}A)\ar[rr]^-{q\times \triangle(p)}&&B'\times_{G_0}(B\times_{G_0}B)\\
A\ar[u]^-{g\times \triangle}\ar[rr]_-p&&B\ar[u]_-{f\times \triangle}
}
\end{equation}

By the functoriality of the deformation to the normal cone we have a morphism of $G$-groupoids (see (\ref{Gf}) for notations)
$$
\xymatrix{
G_g\ar@<.5ex>[d]\ar@<-.5ex>[d]\ar[rr]^-{\widetilde{q\times \triangle(p)}}&&G_f\ar@<.5ex>[d]\ar@<-.5ex>[d]\\
G_g^{(0)}\ar[rr]_-{(\widetilde{q\times \triangle(p)})_0}&&G_f^{(0)}
}
$$
whose restriction at $t=1$ gives $q\times \triangle(p)$ and whose restriction at $t=0$ gives $d^vp\ltimes d^vq:T_vA\ltimes g^*T_vA'\to T_vB\ltimes f^*T_vB'$ as a morphism of $G$-groupoids where $d^vp$ (resp. $d^vq$) stands for the derivative in the tangent vertical direction. Since pullbacks obviuosly commutes with restrictions we have the following commutative diagram
\begin{equation}\label{DiagII}
\xymatrix{
K_{G\ltimes(T_vA\ltimes g^*T_vA')}^{*}(g^*T_vA',P_{g^*T_vA'})\ar[rr]^-{Ind_{\tilde{g}}}\ar@{}[rrd]|-{{\bf II}}&&K_{G\ltimes(A'\times_{G_0}(A\times_{G_0}A))}^{*}(A'\times_{G_0}A,P_{A'\times_{G_0}A})\\
K_{G\ltimes(T_vB\ltimes f^*T_vB')}^{*}(f^*T_vB',P_{f^*T_vB'})\ar[rr]_-{Ind_{\tilde{f}}}\ar[u]^-{(d^vp\ltimes d^vq)^*}&&K_{G\ltimes(B'\times_{G_0}(B\times_{G_0}B))}^{*}(B'\times_{G_0}B,P_{B'\times_{G_0}B})\ar[u]_-{(q\times_{G_0}\triangle(p))^*}
}
\end{equation}

{\bf Diagram III.} 
The groupoid morphism (equivariant w.r. to $G$)
$$d^vp\ltimes d^vq:T_vA\ltimes g^*T_vA'\to T_vB\ltimes f^*T_vB'$$
induces (again by functoriality of the deformation to the normal cone) a $G$-groupoid morphism between the respective tangent groupoids
$$(d^vp\ltimes d^vq)^{tan}:(T_vA\ltimes g^*T_vA')^{tan}\to (T_vB\ltimes f^*T_vB')^{tan}$$ whose restriction at $t=1$ gives $d^vp\ltimes d^vq$ and whose restriction at zero gives $d^vp\oplus d^vq:T_vA\oplus g^*T_vA'\to T_vB\oplus f^*T_vB'$. For the same reason as diagram II we have the following commutative diagram in K-theory:

\begin{equation}\label{DiagIII}
\xymatrix{
K_{G\ltimes(T_vA\oplus g^*T_vA')}^{*}(g^*T_vA',P_{g^*T_vA'})\ar@{}[rrd]|-{{\bf III}}\ar[rr]^-{Ind}&&K_{G\ltimes(T_vA\ltimes g^*T_vA')}^{*}(g^*T_vA',P_{g^*T_vA'})\\
K_{G\ltimes(T_vB\oplus f^*T_vB')}^{*}(f^*T_vB',P_{f^*T_vB'})\ar[u]^-{(d^vp\oplus d^vq)^*}\ar[rr]_-{Ind}&&K_{G\ltimes(T_vB\ltimes f^*T_vB')}^{*}(f^*T_vB',P_{f^*T_vB'})\ar[u]_-{(d^vp\ltimes d^vq)^*}
}
\end{equation}

{\bf Diagram IV.} 
The commutativity of the following diagram follows from the naturality of Thom isomorphism:
\begin{equation}\label{DiagIV}
\xymatrix{
K_{G}^{*}(A,P_A)\ar[rr]^-{Thom}_-{\approx}&&K_{G\ltimes(T_vA\oplus g^*T_vA')}^{*}(g^*T_vA',P_{g^*T_vA'})\\
K_{G}^{*}(B,P_B)\ar[u]^-{p^*}\ar[rr]_-{Thom}^-{\approx}&&K_{G\ltimes(T_vB\oplus f^*T_vB')}^{*}(f^*T_vB',P_{f^*T_vB'})\ar[u]_-{(d^vp\oplus d^vq)^*}
}
\end{equation}

By definition, diagram (\ref{totaldiag}) decomposes, with the previous diagrams, in the following form:
$$
\xymatrix{
\ar[r]\ar@{}[rd]|-{{\bf IV}}&\ar[r]\ar@{}[rd]|-{{\bf III}}&\ar[r]\ar@{}[rd]|-{{\bf II}}&\ar[r]\ar@{}[rd]|-{{\bf I}}&\\
\ar[r]\ar[u]&\ar[u]\ar[r]&\ar[u]\ar[r]&\ar[u]\ar[r]&\ar[u]
}
$$
and hence it is commutative.
\end{proof}

\section{Product on the Twisted K-homology for Lie groupoids}\label{twistsection}


The pushforward functoriality theorem (thm. 4.2 in \cite{CaWangBC}) allows us to give the following definition:

\begin{definition}[Twisted geometric K-homology fo Lie groupoids \`a la Connes]\label{twistkhom}
Let $G \rightrightarrows M$ be a Lie groupoid with a twisting 
$\alpha$. Take $P_\alpha$ a $PU(H)$-principal $G$-bundle over $M$ representing $\alpha$. By the "Twisted geometric K-homology group" associated to 
$(G,\alpha)$ we mean the abelian group denoted by $K_{*}^{geo}(G,\alpha)$ with generators the cycles $(X,x)$ where
\begin{itemize}
\item[(1)]   $X$ is a smooth  co-compact $G$-proper manifold,
\item[(2)]   $\pi_X:X\to M$ is  the smooth momentum map which supposed to be a K-oriented  submersion and
\item[(3)] $x\in K_G^{-p}(X, P_X)$ for some $p\in \mathbb{N}$,
\end{itemize}
and relations given by
\begin{equation}
(X,x)\sim (X',g_!(x))
\end{equation}
where $g:X\to X'$ is a smooth $G$-equivariant map.

The group above depends on the choice $P_\alpha$, for different isomorphic bundles the respective groups are isomorphic as well, we will discuss this on the last section.

The group defined above admits a $\mathbb{Z}_2$-gradation
$$K_{*}^{geo}(G,\alpha)=K_{0}^{geo}(G,\alpha)\bigoplus K_{1}^{geo}(G,\alpha).$$
where $K_{j}^{geo}(G,\alpha)$ is the subgroup generated by cycles $(X,x)$ for which $T_vX$ has rank congruent to $j$ modulo 2.
\end{definition}

We will now describe a product between two cycles with possibly different twistings by using the product structure defined in previous sections.

{\bf The product of two cycles:}
Let $P$ and $Q$ two $PU(H)$-principal bundles on $G$. Let $(X,x)$ with $x\in K_{G}^{-p}(X,P_X)$ and $(Y,y)$ with $y\in K_{G}^{-q}(Y,Q_Y)$ we put 
\begin{equation}
(X,x)\cdot (Y,y):=(X\times_{G_0}Y,\pi_X^*x\bullet \pi_Y^*y)\in K_{G}^{-p-q}(X\times_{G_0}Y,P_{X\times_{G_0}Y}\otimes Q_{X\times_{G_0}Y})
\end{equation} 
where $\pi_X,\pi_Y$ stand for the respective projections from $X\times_{G_0}Y$ to $X$ and $Y$.

The following is the main result of this paper:
\begin{theorem}\label{mainthm}
For any Lie groupoid $G$ the product on cycles described above gives a well defined bilinear associative product
\begin{equation}\label{Kgeoprodthm}
K_{*}^{geo}(G,\alpha)\times K_{*}^{geo}(G,\beta)\to K_{*}^{geo}(G,\alpha+\beta)
\end{equation}
\end{theorem}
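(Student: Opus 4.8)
The plan is to verify in turn the following: that the right-hand side of the formula is a legitimate cycle; that the assignment $(X,x),(Y,y)\mapsto(X\times_{G_0}Y,\pi_X^*x\bullet\pi_Y^*y)$ descends to the equivalence relation defining $K_*^{geo}$; and that the resulting pairing is $\mathbb{Z}$-bilinear and associative; along the way one also checks, as in the introduction, that the product does not depend on the chosen representatives $P_\alpha$, $P_\beta$. The first point is structural and routine: if $\pi_X\colon X\to G_0$ and $\pi_Y\colon Y\to G_0$ are K-oriented submersions then so is $X\times_{G_0}Y\to G_0$, with $T_v(X\times_{G_0}Y)\cong T_vX\oplus T_vY$; the space $X\times_{G_0}Y$ is again a $G$-proper co-compact manifold; $P_{X\times_{G_0}Y}\otimes Q_{X\times_{G_0}Y}$ represents a twisting in the class $\alpha+\beta$; and since $\operatorname{rank} T_v(X\times_{G_0}Y)=\operatorname{rank} T_vX+\operatorname{rank} T_vY$ the product respects the $\mathbb{Z}_2$-grading, so that it does land in $K_*^{geo}(G,\alpha+\beta)$. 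Throughout one uses implicitly the functoriality of the pushforward of \cite{CaWangBC}, which is what makes $K_*^{geo}$ a well-defined group.

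The heart of the matter is well-definedness. The equivalence relation on cycles is generated by the moves $(X,x)\sim(X',g_!(x))$ along $G$-equivariant K-oriented maps $g\colon X\to X'$, so by the symmetry of the two variables it suffices to show that, for a fixed cycle $(Y,y)$, the cycles $(X\times_{G_0}Y,\pi_X^*x\bullet\pi_Y^*y)$ and $(X'\times_{G_0}Y,\pi_{X'}^*g_!(x)\bullet\pi_Y^*y)$ represent the same class. I would introduce $\bar g:=g\times\mathrm{id}_Y\colon X\times_{G_0}Y\to X'\times_{G_0}Y$, which is $G$-equivariant and K-oriented because $V_{\bar g}\cong\pi_X^*V_g\oplus\pi_Y^*(T_v^*Y\oplus T_vY)$, the second summand being canonically complex; note also that $\operatorname{rank} V_{\bar g}$ differs from $\operatorname{rank} V_g$ by an even number, so that Proposition \ref{shriekring} applies to $\bar g$ (in the odd case one first stabilises by a line, exactly as in the definition of the pushforward). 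Applying the defining relation to $\bar g$ gives $(X\times_{G_0}Y,\pi_X^*x\bullet\pi_Y^*y)\sim(X'\times_{G_0}Y,\bar g_!(\pi_X^*x\bullet\pi_Y^*y))$, and it remains to identify $\bar g_!(\pi_X^*x\bullet\pi_Y^*y)$. By Proposition \ref{shriekring} this equals $\bar g_!(\pi_X^*x)\bullet\bar g_!(\pi_Y^*y)$, the two sides being compared through Bott $2$-periodicity (legitimate since $\operatorname{rank} V_{\bar g}$ is even); and Proposition \ref{shrieknatural}, applied to the two commutative squares expressing that $\bar g$ covers $g\colon X\to X'$ and $\mathrm{id}_Y$ through the projections, gives $\bar g_!\circ\pi_X^*=\pi_{X'}^*\circ g_!$ and $\bar g_!\circ\pi_Y^*=\pi_Y^*\circ(\mathrm{id}_Y)_!=\pi_Y^*$. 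Hence $\bar g_!(\pi_X^*x\bullet\pi_Y^*y)=\pi_{X'}^*g_!(x)\bullet\pi_Y^*y$, which is exactly the claimed second cycle; the relation on the $Y$-variable is handled identically with $\mathrm{id}_X\times h$ in place of $g\times\mathrm{id}_Y$.

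Bilinearity then follows from the bilinearity of $\bullet$, the additivity of the pullback maps, and the fact that $\sqcup$ distributes over $\times_{G_0}$ and computes addition in $K_*^{geo}$ via the fold maps. For associativity one fixes the canonical $G$-diffeomorphism $(X\times_{G_0}Y)\times_{G_0}Z\cong X\times_{G_0}(Y\times_{G_0}Z)$: functoriality of the pullback (Lemma \ref{pullnatural}) identifies the iterated pullbacks with the pullbacks $p_X^*,p_Y^*,p_Z^*$ along the projections of the triple fibre product, compatibility of the pullback with $\bullet$ (Proposition \ref{pullring}) rewrites $\pi^*(\pi_X^*x\bullet\pi_Y^*y)$ as $p_X^*x\bullet p_Y^*y$, and associativity of $\bullet$ (Corollary \ref{prodFredassoc}) together with the associativity of $\otimes$ on twistings then identifies the two triple products. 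Finally, independence of the representatives $P_\alpha$, $P_\beta$ is obtained by combining the independence of $\bullet$ on the isomorphism classes of the bundles (Proposition \ref{propprodwelldef}) with the evident naturality of the pullbacks and of the pushforward with respect to bundle isomorphisms.

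The step I expect to be the main obstacle is the identity $\bar g_!(\pi_X^*x\bullet\pi_Y^*y)=\pi_{X'}^*g_!(x)\bullet\pi_Y^*y$. Conceptually it says no more than that $\bar g_!$ operates only in the $X$-direction, but to make it rigorous one must use Propositions \ref{shriekring} and \ref{shrieknatural} simultaneously, keep careful track of the degree shifts and of the twistings (via the $2$-periodicity identification), and handle the even-rank hypothesis of Proposition \ref{shriekring} — this last being the point where the stabilisation trick for maps of odd relative dimension may be needed. Everything else is formal once the compatibility results of the previous sections are available.
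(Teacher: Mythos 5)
Your proof is correct and follows essentially the same route as the paper: apply the defining relation along the product map $g\times\mathrm{id}_Y$ (the paper uses $g\times f$ in both variables at once, an inessential difference) and identify its pushforward via Propositions \ref{shriekring} and \ref{shrieknatural}, with associativity from Corollary \ref{prodFredassoc}, Lemma \ref{pullnatural} and Proposition \ref{pullring}, and independence of representatives from Proposition \ref{propprodwelldef}. Your extra care with the K-orientation of $g\times\mathrm{id}_Y$ and the even-rank/stabilisation bookkeeping only makes explicit what the paper leaves implicit.
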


\begin{proof}
We will prove first that the product described above is well defined in the twisted K-homology group. Let $P$ and $Q$ two twistings on $G$. Let $(X,x)$ with $x\in K_{G}^{-p}(X,P_X)$ and $(Y,y)$ with $y\in K_{G}^{-q}(Y,Q_Y)$. Suppose we have smooth maps
$X\stackrel{g}{\longrightarrow}X'$ and $Y\stackrel{f}{\longrightarrow}Y'$. We would finish if we can show that
$$(X\times_{G_0}Y,\pi_X^*x\bullet \pi_Y^*y)\sim (X'\times_{G_0}Y',\pi_{X'}^{*}g!x\bullet \pi_{Y'}^{*}f!y).$$
In fact we can consider the smooth map
$$X\times_{G_0}Y\stackrel{g\times f}{\longrightarrow}X'\times_{G_0}Y'$$ which fits the following commutative diagrams
$$
\xymatrix{
X\times_{G_0}Y\ar[d]_-{\pi_X}\ar[r]^-{g\times f}&
X'\times_{G_0}Y'\ar[d]^-{\pi_{X'}}\\
X\ar[r]_-{g}&X'
}
$$
and 
$$
\xymatrix{
X\times_{G_0}Y\ar[d]_-{\pi_Y}\ar[r]^-{g\times f}&
X'\times_{G_0}Y'\ar[d]^-{\pi_{Y'}}\\
Y\ar[r]_-{f}&Y'
}
$$
The result now follows from proposition \ref{shrieknatural} and proposition \ref{shriekring} since they imply
$$(g\times f)!(\pi_X^*x\cdot \pi_Y^*y)=(g\times f)!(\pi_X^*x)\cdot (g\times f)!(\pi_Y^*y)=\pi_{X'}^*g!x\cdot \pi_{Y'}^*f!y$$
and hence 
$$(X\times_{G_0}Y,\pi_X^*x\cdot \pi_Y^*y)\sim (X'\times_{G_0}Y',\pi_{X'}^{*}g!x\cdot \pi_{Y'}^{*}f!y),$$
and hence the product is well defined.

Now, we prove the associativity, the fact that it is bilinear being immediate. Take three cycles $(X,x), (Y,y), (Z,z)$ with $x,y,z$ in the respective Twisted K-theory groups associated to $PU(H)$-principal bundles $P_X,Q_Y, R_Z$. It is enough to prove that the element 
$$\pi_{X\times_{G_0}Y}^*(\pi_X^*x\bullet \pi_Y^*y)\bullet \pi_Z^*z\in  K_{G}^{-p-q-r}(X\times_{G_0}Y\times_{G_0}Z,P_X\otimes Q_Y\otimes R_Z)$$
coincides with the element
$$\pi_X^*x\bullet \pi_{Y\times_{G_0}Z}^*(\pi_Y^*y\bullet \pi_Z^*z),$$
and this is a direct computation following corollary \ref{prodFredassoc}, lemma \ref{pullnatural} and proposition \ref{pullring} above. This concludes the proof.
\end{proof}

 
\section{Transfering the product via the Baum-Connes map} 
Recall that in \cite{CaWangBC} the Baum-Connes assembly map
\begin{equation}
\xymatrix{
K_{*}^{geo}(G,\alpha)\ar[rr]^-{\mu_\alpha}&&K^{-*}(G,\alpha)
}
\end{equation}
was constructed for every twisting $\alpha$ on $G$ where $K^{*}(G,\alpha):=
K_{-*}(C_r^*(G,\alpha))$ stands for the $K-$theory of the reduced $C^*$-algebra associated to the twisted groupoid $(G,\alpha)$, (there is also the assembly map taking values on the maximal $C^*$-algebra). The definition of the Baum-Connes map is given by 
$$\mu_\alpha(X,x):=\pi_X!(x)\in K^{*}(G,\alpha)$$
where $\pi_X!$ is the pushforward map defined in \cite{CaWangBC}. 

By the results above one could expect to transpose the multiplicative structure via the assembly map. This is of course the case when this twisted Baum-Connes map is an isomorphism. We can write a precise statement.

\begin{corollary}\label{CorBCextprod}
If $G$ is a Lie groupoid for which the geometric Baum-Connes assembly map $\mu_\alpha$ is an isomorphism for every $\alpha\in H^1(G;PU(H))$, then we have a unique bilinear associative structure on the Twisted $K$-theory groups
\begin{equation}\label{prodBC}
K^{*}(G,\alpha)\times K^{*}(G,\beta)\to K^*(G,\alpha+\beta)
\end{equation}
compatible with the structure of \ref{mainthm} via the assembly maps.
\end{corollary}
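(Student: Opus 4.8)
The plan is to transport the geometric product of Theorem \ref{mainthm} along the assembly maps, which are isomorphisms by hypothesis. For $a\in K^{*}(G,\alpha)$ and $b\in K^{*}(G,\beta)$ I would define
$$a*b:=\mu_{\alpha+\beta}\big(\mu_{\alpha}^{-1}(a)\cdot\mu_{\beta}^{-1}(b)\big)\in K^{*}(G,\alpha+\beta),$$
where $\cdot$ denotes the product on geometric $K$-homology constructed in Theorem \ref{mainthm}. Everything claimed in the corollary then has to be extracted from this formula: that it is well defined (in particular independent of the chosen $PU(H)$-bundles representing $\alpha$ and $\beta$), that it is bilinear and associative, that it is compatible with the maps $\mu_{\gamma}$, and that it is the only product with the last property.

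For well-definedness one must first know that the source and target groups, as well as $\cdot$ itself, depend only on the classes $\alpha,\beta\in H^{1}(G;PU(H))$ and not on the chosen representatives $P_{\alpha},P_{\beta}$, and that the $\mu_{\gamma}$ intertwine the canonical isomorphisms induced by bundle isomorphisms. On the geometric side the independence of $\cdot$ of the representatives is the last clause of Theorem \ref{mainthm}; on the analytic side, together with the compatibility of the assembly map with bundle isomorphisms, this is theorem 6.4 in \cite{CaWangBC}. Assembling these facts, the triples $\big(K_{*}^{geo}(G,\alpha),K^{*}(G,\alpha),\mu_{\alpha}\big)$ form a coherent system indexed by $H^{1}(G;PU(H))$, so the right-hand side of the displayed formula is unambiguous.

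The remaining assertions are formal. Unwinding the definition, for $x\in K_{*}^{geo}(G,\alpha)$ and $y\in K_{*}^{geo}(G,\beta)$ one has $\mu_{\alpha}(x)*\mu_{\beta}(y)=\mu_{\alpha+\beta}(x\cdot y)$, which is exactly the asserted compatibility with the assembly maps; bilinearity of $*$ follows from bilinearity of $\cdot$ together with additivity of each $\mu_{\gamma}$, and associativity of $*$ follows from associativity of $\cdot$ and the bijectivity of $\mu_{\alpha+\beta+\gamma}$. Uniqueness is immediate: any product $*'$ satisfying the same compatibility must obey $\mu_{\alpha}(x)*'\mu_{\beta}(y)=\mu_{\alpha+\beta}(x\cdot y)$, and surjectivity of the assembly maps forces $*'=*$. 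The only real work, and hence the main (minor) obstacle, is the bookkeeping in the previous paragraph: checking that the isomorphisms relating different representatives of a twisting class are mutually compatible on both sides and with the assembly map, so that the construction genuinely descends to a statement about $\alpha$ alone --- which is precisely what Theorem \ref{mainthm} and theorem 6.4 of \cite{CaWangBC} deliver.
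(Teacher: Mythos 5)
Your proposal is correct and follows exactly the route the paper intends: the paper states the corollary without a separate proof, treating it as the immediate transport of the product of Theorem \ref{mainthm} along the isomorphisms $\mu_{\alpha}$, which is precisely your formula $a*b=\mu_{\alpha+\beta}\bigl(\mu_{\alpha}^{-1}(a)\cdot\mu_{\beta}^{-1}(b)\bigr)$. Your extra bookkeeping (independence of representatives via the last clause of Theorem \ref{mainthm} and theorem 6.4 of \cite{CaWangBC}, plus uniqueness from surjectivity) just makes explicit what the paper leaves implicit.
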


It would be enough to have that the assembly maps are injective to ensure a multiplicative structure on the images.

Now, the corollary above seems to ask too much but it can be significantly simplified since by corollary 7.2 in \cite{CaWangBC} the morphism $\mu_\alpha$ is an isomorphism if and only if the assembly map for the associated extension groupoid is. In particular if the geometric assembly map coincides with the analytic assembly map, one might expect that for groupoids (or groups) for which the analytic assembly is known to be an isomorphism for the respective extensions we do have that the multiplicative structure above transfer to the $K$-theory counterpart. This is for example the case for (Hausdorff) Lie groupoids satisfying the Haagerup property (\cite{Tu2} theorem 9.3, see also \cite{Tu3} theorem 6.1). We have then to understand the comparison between the geometric and analytic assemblies which are expected to coincide whenever they do in the untwisted case (for discrete groups and for Lie groups they coincide \cite{BHS} and \cite{BOOSW}). We will study the comparison maps between different $K$-homology theories in a further work.

 \section{The Total Twisted K-groups}

The external products treated up to now suggest a ring structure reflecting in twistings the group structure of $H^1(G; PU(H))$. Let us discuss this higher structure in this last section.

As already mentioned above, the group $K_{*}^{geo}(G,\alpha)$ (as its $K$-theoretical counterpart) is well defined up to isomorphism. Indeed, for defining it we have make a choice of a $PU(H)$-principal bundle $P_\alpha$ over the units of $G$ and $G-$equivariant whose isomorphism class is $\alpha$, we have then pullbacked this bundle to every $G$-manifold. Now, if $P_\alpha$ and $P'_\alpha$ are two isomorphic $G-$bundles in the class $\alpha$ there is a group isomorphism
\begin{equation}\label{choiceiso}
K_{*}^{geo}(G,P_\alpha)\cong K_{*}^{geo}(G,P'_\alpha)
\end{equation}
where we have added the notation $P_\alpha$ in the group to emphasize its dependance on the principal bundle. The following statement follows directly form proposition \ref{propprodwelldef}

\begin{proposition}
For any Lie groupoid $G$ the product 
\begin{equation}
K_{*}^{geo}(G,\alpha)\times K_{*}^{geo}(G,\beta)\to K_{*}^{geo}(G,\alpha+\beta)
\end{equation}
described in theorem \ref{mainthm} is compatible with the isomorphisms (\ref{choiceiso}) above.
\end{proposition}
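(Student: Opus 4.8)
The plan is to reduce the statement to the already-established compatibility of the external product $\bullet$ with bundle isomorphisms, namely Proposition \ref{propprodwelldef}, combined with the way the geometric K-homology product is built out of pullbacks and $\bullet$. First I would fix two isomorphic $G$-equivariant $PU(H)$-principal bundles $P_\alpha, P'_\alpha$ over $M$ representing $\alpha$, and likewise $P_\beta, P'_\beta$ representing $\beta$, together with chosen isomorphisms $\varphi:P_\alpha\to P'_\alpha$ and $\psi:P_\beta\to P'_\beta$. For any $G$-proper co-compact manifold $X$ with momentum map $\pi_X:X\to M$, pulling back $\varphi$ gives an isomorphism $\varphi_X:P_{\alpha,X}=\pi_X^*P_\alpha\to \pi_X^*P'_\alpha=P'_{\alpha,X}$, and these pullbacks are clearly compatible with the further pullbacks along $G$-equivariant maps $g:X\to X'$ used in the relations defining $K_*^{geo}$. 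Thus, as recalled around (\ref{choiceiso}), one gets a well-defined group isomorphism $\Theta_\alpha:K_*^{geo}(G,P_\alpha)\xrightarrow{\cong}K_*^{geo}(G,P'_\alpha)$ sending a cycle $(X,x)$ to $(X,\widetilde{\varphi_X}(x))$, where $\widetilde{\varphi_X}$ is the induced isomorphism on twisted $K$-theory; similarly one gets $\Theta_\beta$ and $\Theta_{\alpha+\beta}$ (using $\varphi\otimes\psi: P_\alpha\otimes P_\beta\to P'_\alpha\otimes P'_\beta$).

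Next I would write out what has to be checked: for cycles $(X,x)$ with $x\in K_G^{-p}(X,P_{\alpha,X})$ and $(Y,y)$ with $y\in K_G^{-q}(Y,P_{\beta,Y})$, one must show
\begin{equation}
\Theta_{\alpha+\beta}\big((X,x)\cdot(Y,y)\big)=\Theta_\alpha(X,x)\cdot\Theta_\beta(Y,y)
\end{equation}
as cycles over $X\times_{G_0}Y$. Unwinding the definition of the product on cycles, the left-hand side is $(X\times_{G_0}Y,\ \widetilde{(\varphi\otimes\psi)_{X\times_{G_0}Y}}(\pi_X^*x\bullet\pi_Y^*y))$ and the right-hand side is $(X\times_{G_0}Y,\ \pi_X^*(\widetilde{\varphi_X}x)\bullet\pi_Y^*(\widetilde{\psi_Y}y))$. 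So the point is the identity
\begin{equation}
\widetilde{(\varphi\otimes\psi)_{Z}}(\pi_X^*x\bullet\pi_Y^*y)=\pi_X^*(\widetilde{\varphi_X}x)\bullet\pi_Y^*(\widetilde{\psi_Y}y),
\qquad Z:=X\times_{G_0}Y,
\end{equation}
in $K_G^{-p-q}(Z,P'_{\alpha,Z}\otimes P'_{\beta,Z})$. I would prove this by first commuting the bundle isomorphisms past the pullbacks — $\widetilde{\varphi_X}$ and $\pi_X^*$ commute in the evident sense because both are defined fibrewise on the Fredholm bundles and the pullback of the isomorphism $\varphi_X$ along $\pi_{X}^Z:Z\to X$ is exactly the isomorphism on $Z$ induced by $\varphi$ — reducing to
\begin{equation}
\widetilde{(\varphi\otimes\psi)_Z}\big((\varphi_X^Z)^{*}\text{-source}\bullet\cdots\big),
\end{equation}
and then invoking Proposition \ref{propprodwelldef} with the ambient manifold $Z$, the bundles $P:=P_{\alpha,Z}$, $P':=P_{\beta,Z}$, and the isomorphisms $f:=(\varphi)_Z$, $g:=(\psi)_Z$: that proposition gives precisely the commutative square expressing $\widetilde{f\otimes g}(a\bullet b)=\widetilde{f}(a)\bullet\widetilde{g}(b)$.

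The only genuinely non-formal point — and I expect it to be the main (minor) obstacle — is bookkeeping the identification of the various induced bundle isomorphisms: one must check carefully that the isomorphism $(\varphi\otimes\psi)_Z$ obtained by first tensoring $\varphi,\psi$ over $M$ and then pulling back to $Z$ agrees, under the canonical identification $P_{\alpha\otimes\beta,Z}\cong P_{\alpha,Z}\otimes_{?}P_{\beta,Z}$ discussed in the proof of Proposition \ref{propprodwelldef} (the transition-function description $\varphi_{\alpha\beta}\widehat{\otimes}\phi_{\alpha\beta}$ there), with the isomorphism $\widetilde{\varphi_X}\otimes\widetilde{\psi_Y}$ used on the right. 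This is a diagram-chase with transition functions and poses no real difficulty, since all maps involved are morphisms of $PU(H)$-principal $G$-bundles. Finally I would note that, by the same argument applied to the relations $(X,x)\sim(X',g_!x)$ — which already hold compatibly with the isomorphisms $\widetilde{\varphi_X}$ by naturality of the pushforward with respect to bundle isomorphisms, cf.\ the analogue of Proposition \ref{shriekring} — the identity descends to classes, so the product of Theorem \ref{mainthm} is intrinsic to $\alpha$ and $\beta$, completing the proof.
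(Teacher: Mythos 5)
Your argument is correct and is essentially the paper's own proof: the paper disposes of this proposition with the single remark that it ``follows directly from proposition \ref{propprodwelldef}'', and your write-up is precisely the unwinding of that remark (commute the induced bundle isomorphisms past the pullbacks, then apply Proposition \ref{propprodwelldef} over $X\times_{G_0}Y$). The additional points you flag --- the transition-function bookkeeping for the tensor of pullbacks and the compatibility of the isomorphisms with the pushforward relations defining $K_{*}^{geo}$ --- are details the paper leaves implicit (the latter being quoted from \cite{CaWangBC}), not a different route.
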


Consider the Total twisted geometric K-homology group of a Lie groupoid $G$, defined as
\begin{equation}
K_{TW,*}^{geo}(G):=\bigoplus_{\alpha\in H^1(G,PU(H))} K_{*}^{geo}(G,\alpha)
\end{equation}

The groups $K_{TW,*}^{geo}(G)$ are well defined up to isomorphism, there is no canonical choice for a representative in a given isomorphism class. These groups and their associated multiplicative structures appeared first in \cite{AdemRuan} in the setting of Orbifolds (definition 8.1 loc.cit). Last proposition allow us however to give a sense to the product, in other terms we can summarize theorem \ref{mainthm} as follows.

\begin{corollary}
For any Lie groupoid, the product described above induces 
\begin{itemize}
\item a ring structure on the even Total twisted geometric K-homology group
$K_{TW,0}^{geo}(G),$ and\\

\item a $K_{TW,0}^{geo}(G)$-module structure on the odd Total twisted geometric K-homology group
$K_{TW,1}^{geo}(G)$.
\end{itemize}
\end{corollary}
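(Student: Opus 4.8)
The plan is to assemble the external products of Theorem~\ref{mainthm}, one for each ordered pair of twistings, into a single multiplication on the big direct sum, and then to read off the asserted ring and module structures from the way this multiplication interacts with the $\IZ_2$-grading. None of the three inputs needed is new: Theorem~\ref{mainthm} itself, the behaviour of the vertical tangent bundle under fibre products, and the proposition immediately preceding this corollary (compatibility of the product with the isomorphisms \eqref{choiceiso}).

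First I would define the pairing on $K_{TW,*}^{geo}(G)=\bigoplus_{\alpha}K_{*}^{geo}(G,\alpha)$. Since every element has finite support, the formula $(a_\alpha)_\alpha\cdot(b_\beta)_\beta:=\big(\sum_{\alpha+\beta=\gamma}a_\alpha\cdot b_\beta\big)_\gamma$, with $a_\alpha\cdot b_\beta\in K_{*}^{geo}(G,\alpha+\beta)$ the product of Theorem~\ref{mainthm}, is a finite sum, hence a well defined map $K_{TW,*}^{geo}(G)\times K_{TW,*}^{geo}(G)\to K_{TW,*}^{geo}(G)$. Bilinearity is inherited summand by summand, and associativity follows from the associativity in Theorem~\ref{mainthm} together with the associativity (in fact, abelianness) of the addition on $H^1(G;PU(H))$, which guarantees that triple products are unambiguously indexed by $\alpha+\beta+\gamma$. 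At this stage $K_{TW,*}^{geo}(G)$ is already an associative ring, graded by $H^1(G;PU(H))$ in the twisting variable and $\IZ_2$-graded in the homological variable.

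Next I would check that the product respects the $\IZ_2$-grading, i.e. that $K_i^{geo}(G,\alpha)\cdot K_j^{geo}(G,\beta)\subseteq K_{i+j}^{geo}(G,\alpha+\beta)$. If $(X,x)$ and $(Y,y)$ are cycles with $T_vX$ of rank $p\equiv i$ and $T_vY$ of rank $q\equiv j$ modulo $2$, then, because the momentum maps $\pi_X\colon X\to G_0$ and $\pi_Y\colon Y\to G_0$ are submersions, the fibre product $X\times_{G_0}Y\to G_0$ is again a submersion and its vertical tangent bundle is $\pi_X^*T_vX\oplus\pi_Y^*T_vY$, of rank $p+q$; hence the product cycle $(X\times_{G_0}Y,\pi_X^*x\bullet\pi_Y^*y)$ lies in homological degree $i+j$. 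Thus $K_{TW,*}^{geo}(G)=K_{TW,0}^{geo}(G)\oplus K_{TW,1}^{geo}(G)$ is a $\IZ_2$-graded ring, and the two claims become formal: the degree-$0$ part $K_{TW,0}^{geo}(G)$ is closed under the product, hence a subring; and the degree-$1$ part is closed under multiplication by the degree-$0$ part, hence a module over $K_{TW,0}^{geo}(G)$ (with the usual convention for possibly non-unital rings and their modules).

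The step I expect to carry the genuine content, as opposed to the direct-sum bookkeeping, is that $K_{TW,*}^{geo}(G)$ is only defined once one has fixed a $PU(H)$-principal $G$-bundle $P_\alpha$ representing each class $\alpha$. Here I would invoke the proposition immediately preceding this corollary: the external product is compatible with the canonical isomorphisms \eqref{choiceiso} induced by a change of representing bundles. Applied to the bundles $P_\alpha$, $P_\beta$ and their tensor products $P_\alpha\otimes P_\beta$, this shows that any two systems of choices yield rings that are isomorphic compatibly with the $\IZ_2$-splitting, so that the ring structure on $K_{TW,0}^{geo}(G)$ and the module structure on $K_{TW,1}^{geo}(G)$ are canonical up to isomorphism — which is exactly what is being asked, since the total groups are themselves only defined up to isomorphism. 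The only mild subtlety here is to organise these isomorphisms coherently over all twistings at once, but that is precisely the content of the cited proposition.
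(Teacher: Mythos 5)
Your proposal is correct and follows essentially the same route the paper intends: the corollary is a formal repackaging of Theorem \ref{mainthm} (direct-sum bookkeeping over $H^1(G;PU(H))$, with the $\IZ_2$-grading respected because $T_v(X\times_{G_0}Y)=\pi_X^*T_vX\oplus\pi_Y^*T_vY$ has rank $p+q$), together with the preceding proposition to handle the dependence on the chosen representing bundles $P_\alpha$. You merely spell out explicitly the grading and well-definedness checks that the paper leaves implicit.
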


We can also consider the Total Twisted K-theory group
$$K^{*}_{TW}(G):=\bigoplus_{\alpha\in H^1(G,PU(H))}K^{*}(G,\alpha).$$
By the theorem above we have a ring (module for the odd case) structure on the image
of the Total twisted Baum-Connes assembly map
\begin{equation}
\xymatrix{
K_{TW,0}^{geo}(G)\ar[rr]^-{\mu_{TW}}&&K^{0}_{TW}(G)
}
\end{equation}
where $\mu_{TW}:=\oplus \mu_\alpha$ whenever $\mu_{TW}$ is injective. In particular if $\mu_{TW}$ is an isomorphism then $K^{0}_{TW}(G)$ has a ring (module for the odd case) structure such that $\mu_{TW}$ is a ring isomorphism.

\bibliographystyle{abbrv}
\bibliography{atiyahsegal}

\end{document}